\titleformat{\section}[hang]{\normalfont\bfseries}{\thesection.}{.5em}{}[]
\titlespacing{\section}{\parindent}{3ex plus .1ex minus .2ex}{1em}
\titleformat{\subsection}[runin]{\normalfont\itshape}{\thesubsection.}{.5em}{}[.]
\titlespacing{\subsection}{\parindent}{3ex plus .1ex minus .2ex}{1em}
\newtheorem{theorem}{Theorem}
\newtheorem{lemma}[theorem]{Lemma}
\newtheorem{corollary}[theorem]{Corollary}
\theoremstyle{definition}
\theoremstyle{remark}
\theoremstyle{remark}
\numberwithin{theorem}{section}
\providecommand{\N}{}
\providecommand{\P}{}
\renewcommand{\N}{{\mathbb N}}
\renewcommand{\P}{\mathbb{P}}
\newcommand{\E}{\mathbb{E}}
\newcommand{\I}[1]{{\mathbf 1}_{[#1]}}
\newcommand{\bigO}{\mathcal{O}}
\let\savehline\hline
\def\hline{\noalign{\vskip0pt}\savehline\noalign{\vskip0pt}}
\newcommand{\Gnp}{\mathcal{G}(n,p)}
\newcommand{\ds}{\displaystyle}
\newcommand{\ER}{Erd\H{o}s-R\'e{}nyi}
\DeclareRobustCommand{\stirling}{\genfrac\{\}{0pt}{}}
\title{Direct Paths in the Temporal Hypercube}
\author{
Austin Eide\footnote{Department of Mathematics, Toronto Metropolitan University, Toronto, ON, Canada; e-mail: \texttt{austin.eide@torontomu.ca}}, 
Martijn Gösgens\footnote{Centrum Wiskunde \& Informatica, Amsterdam, the Netherlands; e-mail: \texttt{research@martijngosgens.nl}},
Pawe\l{} Pra\l{}at\footnote{Department of Mathematics, Toronto Metropolitan University, Toronto, ON, Canada; e-mail: \texttt{pralat@torontomu.ca}}
}
\begin{document}
\maketitle

\begin{abstract}
We consider the $n$-dimensional random \textit{temporal hypercube}, i.e., the $n$-dimensional hypercube graph with its edges endowed with i.i.d.\ continuous random weights. We say that a vertex $w$ is \emph{accessible} from another vertex $v$ if and only if there is a path from $v$ to $w$ with increasing edge weights. We study accessible ``direct" paths from a fixed vertex to its antipodal point and show that as $n \to \infty$, the number of such paths converges in distribution to a mixed Poisson law with mixture given by the product of two independent exponentials with rate $1$. Our proof makes use of the Chen-Stein method, coupling arguments, as well as combinatorial arguments which show that typical pairs of accessible paths have small overlap.
\end{abstract}

\section{Introduction}

In a temporal graph, each edge is endowed with a weight that we interpret as a time-stamp; an edge is inactive before the time-stamp is reached, after which point it is active. We say that a path in a temporal graph is \textit{accessible} if it is increasing with respect to the edge weights, and a vertex $w$ is accessible from $v$ if there is some accessible path from $v$ to $w$ in the graph. The graph is \textit{temporally connected} if all vertices are pairwise accessible from one another. Temporal graphs provide a natural and perhaps more realistic model for a variety of dynamic network processes. For instance, in a dynamically evolving social network, information can only pass from one node to another after they have formed a connection in the network. On the theoretical side, extending basic graph theoretic properties such as connectivity and diameter to the temporal setting is an interesting technical challenge. We refer to~\cite{michail2016introduction} for a broad overview of the topic.

In this paper, we are interested in the setting in which edge weights are assigned randomly. We remark that when edge weights are sampled i.i.d.\ from any continuous distribution, the model is equivalent to one in which the edges are ordered by a random permutation (an equivalence we exploit throughout the paper). Of particular interest in the literature has been the length of a longest accessible path in a randomly weighted temporal graph. Using the second moment method, \cite{lavrov2016increasing} showed that a randomly edge-ordered complete graph $K_{n}$ contains an accessible Hamiltonian path with probability at least $\frac{1}{e} + o(1)$, and additionally that it contains accessible paths of length $0.85n$ with high probability\footnote{An event holds \emph{with high probability} (\emph{w.h.p.}) if it holds with probability tending to one as $n \to \infty$.}. They conjecture that an accessible Hamiltonian path also exists w.h.p., a problem which remains open. We note that on the deterministic side, \cite{graham1973increasing} showed that for \textit{any} edge ordering of $K_{n}$ there exists an accessible path of length at least $\sqrt{n-1}$, and \cite{calderbank1984increasing} construct orderings for which the longest accessible path has length at most $(1+o(1))\frac{n}{2}$.

There are also significant recent results on temporal \emph{random} graphs~\citep{angel2020long,becker2022giant,broutin2023increasing,casteigts2024sharp}. For the temporal \ER~random graph $\Gnp$ with connection probability $p$, \cite{casteigts2024sharp} establish that there are several phase transitions: at $p= \frac {\log n}{n}$, any fixed pair of vertices is mutually accessible w.h.p.; at $p= \frac {2\log n}{n}$, for any fixed $v$, all vertices are accessible from $v$ w.h.p.; and at $p=\frac {3\log n}{n}$, the graph is temporally connected with high probability. \cite{angel2020long} shows that for $\frac{\log n}{n} \ll p \ll 1$ the longest accessible path in $\Gnp$ has length in $[(1-\varepsilon)enp, (1+\varepsilon)enp]$ w.h.p.\ for any fixed $\varepsilon >0$; \cite{broutin2023increasing} extend this result to the regime $p = \frac{c\log n}{n}$, showing (among other results) that the longest accessible path has length in $[(1-\varepsilon)\alpha(c)\log n, (1+\varepsilon)\alpha(c)\log n]$ w.h.p.\ for an explicit constant $\alpha(c)$. For the $d$-dimensional Random Geometric Graph, it was proven~\citep{Brandenberger_Donderwinkel_Kerriou_Lugosi_Mitchell_2025} that the threshold for temporal connectivity occurs when the degrees are $\Theta(n^{1/(d+1)})$, which is much higher than the $\Theta(\log n)$ threshold for the \ER~random graph.

It is easy to see that the hypercube cannot be w.h.p.\ temporarily connected. For example, we expect one vertex with all incident edges with weights more than $\tfrac12$ and one with all weights less than $\tfrac12$. With probability bounded away from zero we have at least one vertex of each type which implies that there is no accessible path from one of them to the other. Hence, in this work we study the following variant of the problems mentioned above. Consider the $n$-dimensional hypercube graph, with vertex set given by the power set of $[n] = \{1,2,\dots,n\}$ and edge set given by pairs $v,v' \subseteq [n]$ with $|v \Delta v'| = 1$. We say that a path from $v$ to $v'$ in the hypercube is \textit{direct} if it has length equal to the graph distance of $v$ and $v'$. Assign i.i.d.\ $\text{Unif}[0,1]$ weights to the edges, and define $X_{n}$ to be the random variable which counts the number of accessible direct paths from $\emptyset$ to $[n]$. We remark that $X_{n}$ is a particularly critical random variable: there are $n!$ direct paths from $\emptyset$ to $[n]$, and each of them is accessible with probability exactly $\frac{1}{n!}$, which means $\E[X_n]=1$. If the correlations between different paths are negligible, then it is reasonable to expect $X_n$ to converge to a Poisson random variable with parameter $1$. On the other hand, the presence of one directed path increases chances for other overlapping direct paths. If this correlation is strong, we may have $X_n=0$ w.h.p.\ and $\E[X_n\ |\ X_n>0]\to\infty$, so that $\E[X_n]=1$. These correlations, however, are highly non-trivial. Understanding this critical scenario was proposed as one of the open problems during the 17th Annual Workshop on Probability and Combinatorics, McGill University’s Bellairs Institute, Holetown, Barbados (April 7--14, 2023). Our main result, \cref{thm:main}, shows that both conjectures above are false: it establishes that $X_{n}$ converges in distribution to a mixed Poisson random variable $\text{Poi}(Z\cdot Z')$, where $Z$ and $Z'$ are independent exponential random variables of rate~$1$. In particular, $\mathbb{P}(X_n=0) \to \delta$, where $\delta=\int_{0}^{\infty}\frac{e^{-z}}{1+z}\,dz \approx 0.596347$ is the \emph{Gompertz} constant~\citep{gompertz1825mortality}.

While there is some previous work on the temporal hypercube (e.g.,~\cite{de2015increasing}), our result shares more significant similarities with the \textit{vertex-weighted} version of the problem, which has been studied extensively~\citep{berestycki2014accessibility,berestycki2016number,kauffman1987towards,kingman1978simple,martinsson2015accessibility}. Interest in the vertex-weighted hypercube comes from the field of evolutionary biology, where each vertex of the hypercube corresponds to a potential genetic code. One typically fixes the weight of vertex $[n]$ to $1$ and the weight of $\emptyset$ to $x\in[0,1)$, and considers the distribution of the number of accessible paths from one to the other. This process is sometimes also referred to as \emph{accessibility percolation}~\citep{martinsson2015accessibility}. As in the edge-weighted case, there are $n!$ direct paths from $\emptyset$ to $[n]$; in order for such a path to be increasing, all of the $n-1$ interior vertex weights must be at least $x$ and be in increasing order, which occurs with probability $\frac{(1-x)^{n-1}}{(n-1)!}$. Thus we expect $n(1-x)^{n-1}$ accessible paths. 

\cite{hegarty2014existence} establish a phase transition $x = \frac{\log n}{n}$, the point at which the expected number of increasing paths is $1 + o(1)$: for $\omega = \omega(n)$ tending to infinity arbitrarily slowly, if $x = \frac{\log n - \omega}{n}$ then there is no accessible direct path w.h.p., and if $x = \frac{\log n + \omega}{n}$ then an accessible direct path exists w.h.p. \cite{li2018phase} proves a similar result when backtracking is allowed. When $x = c/n$ for constant $c\geq 0$, \cite{berestycki2016number} show that the number of accessible direct paths from $\emptyset$ to $[n]$, rescaled by $n$, converges in distribution to $e^{-c}Z\cdot Z'$, where $Z$ and $Z'$ are independent exponential random variables of rate $1$. Our main result and proof techniques bear some similarity to this one, a connection we explore in more detail in~\cref{sec:proof_overview}.

Our proof strategy makes use of the \emph{Chen-Stein method}, which bounds the difference between the distribution of a sum of indicators and the Poisson distribution~\citep{arratia1990poisson}. The Chen-Stein method is typically used to prove Poisson limit laws, but has also been used to prove \emph{mixed} Poisson limits~\citep{reinert2007length} like our main result.

\subsection{Notation}

We consider a hypercube on $2^n$ vertices. Let $[n]=\{1,\dots,n\}$ denote the set of dimensions. Each of the $2^n$ vertices of the hypercube can be identified with a subset of $[n]$, corresponding to the set of nonzero coordinates of the vertex. This way, the vertex-set corresponds to the \emph{power set} $V=\mathcal{P}([n])$ of $[n]$. For $v\in V$, we refer to $|v|$ (the number of nonzero coordinates of $v$) as the \emph{level} of $v$, and we denote the set of vertices with level $k$ by $V_k$. We are interested in paths between the two endpoints $\emptyset$ and $[n]$. Two vertices $v,v'\subseteq [n]$ are connected if they differ by a single coordinate, i.e., if $|v\triangle v'|=1$, where $A \triangle B = (A \setminus B) \cup (B \setminus A)$ is the symmetric difference of two sets, $A$ and $B$. Because every vertex has degree $n$, the hypercube has $2^{n-1}n$ edges in total. If $v\subset v'\subseteq[n]$, then there exists a (not necessarily accessible) direct path from $v$ to $v'$. We denote the set of direct paths from $v$ to $v'$ by $\Pi(v,v')$. Each of these paths corresponds to a permutation of the directions $v'\setminus v$, so that $|\Pi(v,v')|=(|v'|-|v|)!$ if $v\subseteq v'$ and $|\Pi(v,v')|=0$ otherwise. For $U,U'\subseteq V$, we write $\Pi(U,U')=\bigcup_{u\in U,u'\in U'}\Pi(u,u')$ to denote the collection of direct paths from $U$ to $U'$.

We denote the set of hypercube edges by $E$. We say that an edge has level $k$ if it connects a vertex of level $k-1$ to a vertex of level $k$, and let $E_k$ denote the set of hypercube edges at level $k$. We let $W(e)$ denote the weight of edge $e\in E$. Each of the weights are independently drawn from a uniform distribution on $[0,1]$. For a path $\pi$, let $v_k(\pi)\in V_k$ denote the level-$k$ vertex of the path $\pi$. Similarly, let $W_k(\pi)$ denote the weight of the $k$-th edge along this path, and let $|\pi|$ denote the length of the path (i.e., the number of edges). We say that a path $\pi$ is \emph{accessible} if
\[
W_1(\pi)<\dots<W_{|\pi|}(\pi),
\]
and we denote the indicator of this event by $I(\pi)$. We are interested in the number of accessible paths from $\emptyset$ to $[n]$. That is, we study the distribution of
\[
X_n=\sum_{\pi\in\Pi(\emptyset,[n])}I(\pi).
\]
We use standard asymptotic notation throughout. For a real sequence $a_{n}$ and nonnegative real sequence $b_{n}$, we write $a_{n} = \bigO(b_{n})$ if $|a_{n}| \leq Cb_{n}$ for some $C>0$. Similarly, $a_{n} = \Omega(b_{n})$ if $|a_{n}| \geq cb_{n}$ for some $c > 0$ and $n$ sufficiently large. If both $a_{n} = \bigO(b_{n})$ and $a_{n} = \Omega(b_{n})$ hold, then we write $a_{n} = \Theta(b_{n})$.  If $\ds\lim_{n \to \infty}a_{n}/b_{n} = 0$, then we write $a_{n} = o(b_{n})$ or $a_n \ll b_n$. Finally, if $\ds\lim_{n \to \infty}b_{n}/a_{n} = 0$, then we write $a_{n} = \omega(b_{n})$ or $a_n \gg b_n$.

\subsection{Main results}

\begin{theorem}\label{thm:main}
    The number of accessible direct paths in the temporal hypercube converges in distribution to a mixed Poisson distribution, where the mixture corresponds to the product of two independent exponential random variables with rate $1$. That is,
    \[
    \P(X_n=x)\to\mathbb{E}\left[\frac{(Z\cdot Z')^x}{x!}e^{-Z\cdot Z'}\right],
    \]
    for $x\in\mathbb{N}$ and $Z,Z'\sim\text{Exp}(1)$ independently.
    More explicitly, the limiting distribution is given by
    \begin{equation}\label{eq:gompertz-expression}
    \P(X=x)=\sum_{k=0}^x{x\choose k}\frac{\delta-\sum_{r=0}^{k-1}(-1)^rr!}{k!},
    \end{equation}
    where $\P(X=0)=\delta=\int_{0}^{\infty}\frac{e^{-z}}{1+z}\,dz \approx 0.596347$ is the \emph{Gompertz} constant~\citep{gompertz1825mortality}.
\end{theorem}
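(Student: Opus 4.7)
The plan is to prove \cref{thm:main} by showing that the factorial moments of $X_n$ converge to those of the proposed mixed Poisson limit. Specifically, I would show that for every fixed $k \geq 1$,
\[
\mathbb{E}[X_n^{(k)}] \; \longrightarrow \; (k!)^2,
\]
where $X_n^{(k)} = X_n(X_n - 1) \cdots (X_n - k + 1)$ is the $k$-th falling factorial. These limits match the factorial moments of $\text{Poi}(Z Z')$ with $Z, Z' \sim \text{Exp}(1)$ independent, since $\mathbb{E}[(ZZ')^k] = \mathbb{E}[Z^k] \, \mathbb{E}[(Z')^k] = (k!)^2$. Convergence in distribution then follows from the factorial moment convergence (the mixed Poisson with exponential-exponential mixture being determined by its moments), possibly combined with a Chen-Stein argument for tightness as in Reinert-R\"ollin style applications.

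The key computation expands
\[
\mathbb{E}[X_n^{(k)}] \; = \; \sum_{(\pi_1, \ldots, \pi_k)} \mathbb{P}(\pi_1, \ldots, \pi_k \text{ all accessible}),
\]
summing over ordered $k$-tuples of distinct direct paths, and partitions the tuples by their \emph{overlap structure}. For $k = 2$, ordered pairs $(\sigma, \tau)$ are classified by integers $p, q \geq 0$, where $p$ is the length of the longest common prefix and $q$ the length of the longest common suffix of the permutations (equivalently, the numbers of shared first and last edges). Conditioning on the $p + q$ shared boundary edge weights and evaluating the resulting Dirichlet-type integral, the contribution of $(p, q)$-pairs with disjoint middle edges is
\[
\frac{n! \cdot (2(n - p - q))!}{(n - p - q)! \cdot (2n - p - q)!} \; \longrightarrow \; 2^{-(p + q)},
\]
and summing gives $\sum_{p, q \geq 0} 2^{-(p+q)} = \bigl(\sum_{p \geq 0} 2^{-p}\bigr)^2 = 4 = (2!)^2$, matching the second factorial moment of the claimed limit. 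For general $k$, $k$-tuples are classified by more intricate boundary-agreement hierarchies (roughly, pairs of rooted forests on $[k]$ encoding the prefix- and suffix-agreement patterns across the $k$ paths), and summing the corresponding contributions should produce $(k!)^2$ via an analogous combinatorial identity.

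The Chen-Stein-style input, alluded to in the abstract, is that $k$-tuples of paths sharing \emph{interior} (non-boundary) edges contribute only $o(1)$ to $\mathbb{E}[X_n^{(k)}]$. This follows from a combinatorial estimate: two uniformly random direct paths from $\emptyset$ to $[n]$ share only $O(1/n)$ interior edges in expectation, and the joint accessibility probability of pairs with shared interior edges can be bounded using the same Dirichlet-integral machinery. Combining the boundary-overlap contributions, the interior-overlap negligibility, and the match with the factorial moments of $\text{Poi}(ZZ')$ establishes the convergence in distribution; the explicit formula \eqref{eq:gompertz-expression} is then obtained from the mixed Poisson representation $\mathbb{P}(X = x) = \mathbb{E}[(ZZ')^x e^{-ZZ'}/x!]$ by direct power-series manipulation, expanding $e^{-ZZ'}$ and computing the resulting moment integrals against the Exp(1) densities.

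The main obstacle I anticipate is the combinatorial bookkeeping for general $k$: extending the clean $(p, q)$-classification from $k = 2$ to a full enumeration of boundary-overlap patterns for arbitrary $k$, and verifying that the sum of contributions is exactly $(k!)^2$. A generating-function or bijective argument will likely be needed to handle the resulting identity cleanly. A secondary challenge is ensuring the $o(1)$ bound on interior-overlap contributions is sufficiently uniform in $k$ for the method of moments to apply.
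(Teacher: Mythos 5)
Your proposal rests entirely on the method of moments: showing $\mathbb{E}[X_n^{(k)}] \to (k!)^2$ for all fixed $k$ and then invoking moment determinacy of $\text{Poi}(ZZ')$. This is not the route the paper takes, and the paper explicitly flags why: after the statement of the theorem, the authors remark that ``the convergence in distribution of~\cref{thm:main} does not necessarily imply convergence in moments'' and that ``at present, we can only prove that $\mathbb{E}[X_n^k] \to \mathbb{E}[X^k]$ for $k \in \{1,2\}$.'' So the high-moment convergence your plan hinges on is an open problem in the paper itself, not a lemma you can wave through. Your $k = 2$ bookkeeping essentially reproduces the paper's \cref{lem:pair_sums} (the $(p,q)$ prefix/suffix classification matches the $g = 1$ gap case, $(s+1)2^{-s}$ summing to $4$, with $g \geq 2$ gaps and large overlaps contributing $o(1)$), and that already occupies the whole of Appendix~A. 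You correctly anticipate that the general-$k$ case requires controlling ``boundary-agreement hierarchies'' of $k$-tuples, but you have not produced the identity or the uniform $o(1)$ bound, and there is no evidence in the paper that this is tractable — the authors went a different way precisely to avoid it.

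The paper's actual strategy is genuinely different. It first reduces to tree paths $X_{k,n}^*$ (\cref{lem:approximation}), then shows that the \emph{conditional} intensity $\lambda_{k,n} = \mathbb{E}[X_{k,n}^* \mid (W(e))_{e \in E_k^*}]$ converges in probability to $ZZ'$ (\cref{lem:trees-main}), and finally uses Chen--Stein \emph{conditionally} to couple $X_{k,n}^*$ to $\text{Poi}(\lambda_{k,n})$ with vanishing failure probability (\cref{lem:middle}); the theorem then falls out of bounded convergence and the continuous mapping theorem. Crucially, this needs only the second-moment computation (\cref{lem:pair_sums}) to feed the Chen--Stein error bound, so it sidesteps all higher moments. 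Your description of Chen--Stein as supplying ``tightness'' also misreads its role: in the paper it directly controls the conditional distributional distance to Poisson, not tightness of a moment sequence. The gap in your proposal is therefore concrete: you assume the convergence $\mathbb{E}[X_n^{(k)}] \to (k!)^2$ for all $k$, which is unproven (and possibly false, given the authors' caveat), and you give no mechanism for proving it — so the argument does not close.
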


\cref{thm:main} allows us to compute the limiting probability mass function for any $x$. The first few values are shown in~\cref{tab:pmf}.

\begin{table}[]
    \centering
    \begin{tabular}{c|c|c}
    $x$     &  $\P(X=x)$ & Rounded\\ \savehline
    $0$     & $\delta$ & $0.596347$ \\
    $1$     & $2\delta-1$ & $0.192695$ \\
    $2$     & $\frac72\delta-2$ & $0.087216$ \\
    $3$     & $\frac{17}3\delta-\frac{10}{3}$ & $0.045968$ \\
    $4$     & $\frac{209}{24}\delta-\frac{31}{6}$ & $0.026525$ \\
    $5$     & $\frac{773}{60}\delta-\frac{23}{3}$ & $0.016275$ \\
    $100$     &  & $1.78264 \cdot 10^{-9}$ \\
    $200$     &  & $3.85980 \cdot 10^{-13}$ \\
    $300$     &  & $5.97185 \cdot 10^{-16}$ \\
    \end{tabular}
    \caption{The first few values of the limiting probability distribution and a few larger ones; $\delta=\int_{0}^{\infty}\frac{e^{-z}}{1+z}\,dz$ is the \emph{Gompertz} constant~\citep{gompertz1825mortality}.}
    \label{tab:pmf}
\end{table}

Similarly to the result about the supercritical vertex-weighted hypercube from~\cite{berestycki2016number}, it turns out that the distribution of $X_n$ is largely determined by the weights close to the endpoints of the hypercube. The contribution of the weights close to the endpoint $\emptyset$ corresponds to the exponential random variable $Z$, while the contribution of the weights close to endpoint $[n]$ corresponds to $Z'$.

While~\cref{thm:main} is formulated in terms of the fixed vertices $\emptyset$ and $[n]$, it is easy to see that this result applies for \emph{any} two vertices $v,w$ at distance $d\le n$ as $d=d(n)\to\infty$, since all direct paths from $v$ to $w$ are contained in a $d$-dimensional hypercube. In particular, \cref{thm:main} also applies to $v,w$ chosen uniformly from an $n$-dimensional hypercube, since the distance between $v$ and $w$ is concentrated around $n/2$.

We note that the convergence in distribution of \cref{thm:main} does not necessarily imply convergence in moments, although we can obtain an explicit formula for the moments of the limiting distribution. Indeed, if $Y$ is a random variable and $X \sim \text{Poi}(Y)$, then by Proposition~1 of \cite{kuba2016moment}, the moments of $X$ are given by 
\[
    \E[X^{k}] = \sum_{j=0}^{k}\stirling{k}{j}\E[Y^{j}]
\]
where $\stirling{k}{j}$ is a \textit{Stirling number of the second kind}, that is, the number of ways to partition a set of $k$ objects into $j$ nonempty parts. In our case, the mixing distribution has moments given by $\E[(Z\cdot Z')^{k}] = \E[Z^{k}]^{2}= (k!)^{2}$, so that
\[
    \E[X^{k}] = \sum_{j=0}^{k}\stirling{k}{j}(j!)^{2}.
\]
The sequence above is the \textit{Stirling transform} of the sequence $(k!)^{2}.$ At present, we can only prove that $\E[X_{n}^{k}] \to \E[X^{k}]$ for $k \in \{1,2\}$. As mentioned above, $\E[X_{n}] = 1 = \E[X]$ for all $n$, and $\E[X_{n}^{2}] \to \sum_{j=0}^{2}\stirling{2}{j}(j!)^{2} = 5$, as summarized in the following lemma. 

\begin{lemma}\label{lem:2nd-moment}
    The second moment of $X_n$ converges to the second moment of $X\sim \text{Poi}(Z\cdot Z')$. That is,
    \[
    \mathbb{E}[X_n^2]\to \mathbb{E}[X^2]=5.
    \]
\end{lemma}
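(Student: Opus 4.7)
The plan is to expand
\[
\mathbb{E}[X_n^2] = \sum_{\pi, \pi'} \mathbb{P}(I(\pi) I(\pi') = 1)
\]
over ordered pairs of direct paths and organize the sum by the set of levels at which the two paths share an edge. For any pair, write this set as $S = \{k_1 < \cdots < k_L\}$ and let $N_0, N_1, \ldots, N_L$ denote the sizes of the $L + 1$ gaps between consecutive shared levels (with boundary conventions $k_0 := 0$, $k_{L+1} := n + 1$). The gap sizes sum to $n - L$, and each $N_i$ is either $0$ or at least $2$, since a gap of size $1$ would force the intervening level also to be shared.

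The first main step is to derive the probability
\[
\mathbb{P}(I(\pi) I(\pi') = 1) = \frac{1}{(2n-L)!} \prod_{i=0}^{L} \binom{2 N_i}{N_i},
\]
by ranking the $2n - L$ distinct edges used by $\pi$ and $\pi'$ uniformly at random: inside each gap, the $2N_i$ non-shared edges can be interleaved in $\binom{2 N_i}{N_i}$ orderings consistent with the two increasing chains. The number of ordered pairs with shared level set exactly equal to $S$ is then $n! \prod_{i} D_1(N_i)$, where $D_1(N)$ denotes the number of permutations of $[N]$ whose induced $N$-cube path shares no edge with the identity; a Markov bound on the expected number of shared edges (which is $O(1/N)$) yields $D_1(N)/N! \to 1$. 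Combining,
\[
\mathbb{E}[X_n^2] = \sum_{\text{valid } S} n! \, \frac{\prod_i D_1(N_i) \binom{2 N_i}{N_i}}{(2n - L)!}.
\]

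Asymptotic evaluation isolates the dominant configurations. Shared sets of the form $S = \{1, \ldots, j\} \cup \{j + M + 1, \ldots, n\}$ (with $L = n - M$ and $j \in \{0, \ldots, L\}$) correspond to a single nonzero gap of size $M$; a direct Stirling computation shows each contributes $n! D_1(M) \binom{2M}{M}/(2n - L)! \to (1/2)^L$ for each fixed $L \geq 0$. Since there are $L + 1$ such configurations per $L$, their combined contribution approaches $\sum_{L \geq 0}(L+1)/2^L = 4$. Additionally, the identical-paths case ($S = [n]$) contributes exactly $n! \cdot 1/n! = 1$. Together these yield $\mathbb{E}[X_n^2] \to 1 + 4 = 5$.

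The hard part will be showing that all other configurations contribute $o(1)$ in total. For single-nonzero gaps with $M = n - L$ fixed and $n \to \infty$, each contribution is $O(1/n^M)$ while the number of configurations is $O(n)$, giving total $O(1/n^{M-1}) = o(1)$ for $M \geq 2$. For single-nonzero gaps in the bulk regime (both $L$ and $M$ tending to infinity with $M/n$ bounded away from $0$ and $1$), Stirling yields exponential per-configuration decay at rate $n f(M/n)$ with $f(\alpha) = 2\alpha\log 2 + \alpha\log\alpha - (1+\alpha)\log(1+\alpha)$, which is strictly negative on $(0, 1)$. For configurations with two or more nonzero gaps of sizes $M_1, \ldots, M_m$, the Vandermonde-type bound $\prod_j \binom{2 M_j}{M_j} \leq \binom{2(n-L)}{n-L}$ reduces the analysis to the single-nonzero case divided by the multinomial coefficient $\binom{n-L}{M_1, \ldots, M_m}$, which is polynomially or exponentially small depending on the balance of the $M_j$; combined with the polynomial count of multi-gap configurations, the aggregate is $o(1)$.
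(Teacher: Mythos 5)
Your proposal takes essentially the same route as the paper: you expand $\E[X_n^2]$ by the shared-level set (equivalently the gap vector), derive the identical pair-probability formula $\E[I_\pi I_{\pi'}] = \prod_i \binom{2N_i}{N_i}/(2n-L)!$, count pairs using the edge-disjoint path count $D_1(N)$ (the paper's $w_N$) with the same union-bound argument that $D_1(N)/N! \to 1$, extract the dominant single-gap contribution $\sum_{L\ge 0}(L+1)2^{-L}=4$ plus $1$ from identical paths, and sketch the same regime-splitting for the remainder. The part you flag as the hard step — uniform $o(1)$ bounds across the intermediate zones where both $L$ and $M=n-L$ grow, and the multi-gap configurations — is precisely what the paper's Lemmas~\ref{lem:djk}, \ref{lem:exp_term}, \ref{lem:small_j}, and \ref{lem:large_j} carry out in detail, so the approach matches but would need those uniform estimates to be made rigorous.
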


The limiting distribution is \emph{heavy-tailed}, in the sense that its tail decays slower than any exponential~\citep{nair2022fundamentals}. A nonnegative random variable is heavy-tailed whenever
\[
\lim_{x\to\infty}e^{tx}\P(X>x)\to\infty,
\]
for all $t>0$. For $X\sim \text{Poi}(Z\cdot Z')$, this can be seen from
\[
\P(X>x)\ge \P(Z,Z'>\sqrt{x})\cdot\P\left(X>Z\cdot Z'\ \middle|\ Z,Z'>\sqrt{x}\right)\sim \frac12e^{-2\sqrt{x}}=e^{-o(x)}.
\]

\subsection{Proof overview}\label{sec:proof_overview}

Here we give an overview of the proof of \cref{thm:main}, beginning with an observation which helps us understand the structure of the union of accessible direct paths from $\emptyset$ to $[n]$. A byproduct of our second moment computations in~\cref{sec:second_moment} and~\cref{sec:sm_appendix} is the following result, which we do not prove explicitly but which is easily derived from~\cref{lem:pair_sums}. Let $k = k(n) \leq \lfloor n/2 \rfloor$ grow to infinity arbitrarily slowly. W.h.p., any pair of accessible direct paths has the following structure: the paths travel together from $\emptyset$ until they diverge at some level  $0 \leq \ell \leq k$, and are edge-disjoint until they meet again at some level $n-k \leq \ell' \leq n$, after which point they travel together again to the terminal vertex $[n]$. In fact, a mild extension of our arguments shows that ``edge-disjoint" can be replaced with ``vertex-disjoint" in the preceding statement. The same structure was observed for the vertex-weighted case in~\cite{berestycki2016number} and~\cite{hegarty2014existence}, where similar second moment calculations are carried out.

Thus, the set of edges which are part of an accessible direct path from $\emptyset$ to $[n]$ induces a tree on the first $k$ levels and another tree on the last $k$ levels w.h.p., for any $k = k(n) \leq \lfloor n/2 \rfloor$ tending to infinity. Moreover, no accessible direct paths intersect between levels $k$ and $n-k$ w.h.p. This motivates splitting our analysis into two parts: one dealing with the ``tree segments" at the ends of the hypercube, and one dealing with the ``middle segment" in which leaves of the two trees are joined by compatible paths. We treat each part separately in Sections~\ref{sec:trees} and~\ref{sec:middle-part}.

Motivated by the observations above, for the tree segments our strategy is to greedily find a large tree in the first $k = k(n)$ levels of the hypercube, with root vertex $\emptyset$, such that every path from $\emptyset$ to a leaf is increasing, and such that the weights along the tree paths are small. We also find a symmetric counterpart to this tree in the last $k$ levels rooted at $[n]$, this time with all paths decreasing and consisting of large weights. Each non-leaf vertex in these trees has $r$ children. The children of a given vertex $v$ in the bottom tree are selected by choosing the $r$ directions with smallest weight which (i) have weight larger than the weight on the edge leading into $v$ in the tree, and (ii) have not already been chosen by an ancestor of $v$ in the tree. We use an analogous rule to construct the top tree. In~\cref{lem:poi-coupling}, we show that if $k \to \infty$ and $k^{4}r^{2}\log r \ll n$, then (a transformation of) the weights along the tree edges converge almost surely to sums of exponential random variables. A similar tree construction for the vertex-weighted hypercube appears in~\cite{berestycki2016number}, but instead of a fixed number of offspring they use a weight threshold to select children at each vertex. The fixed-offspring trees have additional structure, which we use in our proofs. We note that~\cite{berestycki2016number} proves a scaling limit while we characterize the exact limiting distribution, which requires more precise asymptotics for these tree segments.

For our purposes, choosing $r = \lceil \frac{3}{2}\log k\rceil$ will be enough to ensure that the trees are large enough to capture all accessible direct paths with high probability. With this choice for $r$, we let $\Pi^*_k\subset\Pi(\emptyset,[n])$ denote the set of paths where the first and last $k$ edges correspond to paths in the trees. Note that $\Pi_k^*$ is a random set, which depends only on the weights in the first and last $k$ levels. We then define the random variable 
\[
X_{k,n}^{*}=\sum_{\pi\in\Pi_k^*}I_\pi.
\]
Clearly, $X_{k,n}^{*}\le X_{n}$. We will prove in~\cref{sec:trees} that the paths outside $\Pi_k^*$ are negligible:
\begin{lemma}\label{lem:approximation}
    For $1\ll k\ll \frac{\log n}{\log\log n}\to\infty$, it holds w.h.p.\ that $X_n=X_{k,n}^{*}$.
\end{lemma}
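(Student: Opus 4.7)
The plan is a first moment argument. Since $X_n-X_{k,n}^*\ge 0$ and is integer-valued, by Markov's inequality it suffices to prove that $\mathbb{E}[X_n-X_{k,n}^*]=o(1)$. A path $\pi\in\Pi(\emptyset,[n])$ contributes to $X_n-X_{k,n}^*$ precisely when $\pi$ is accessible yet either its first $k$ edges fail to form a root-to-level-$k$ path in the bottom tree, or its last $k$ edges fail the analogous condition in the top tree. Under the measure-preserving involution $W(e)\leftrightarrow 1-W(e)$, which swaps the two trees while preserving the set of accessible direct paths, the two contributions have equal expectation, so it is enough to bound the expected number $\mathbb{E}[Y_k]$ of accessible direct paths whose first $k$ edges are not all in the bottom tree.

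To bound $\mathbb{E}[Y_k]$, fix a direct path $\pi$ and suppose $j\in\{0,\ldots,k-1\}$ is the smallest index at which $\pi$'s $(j+1)$-th edge leaves the bottom tree. Since the first $j$ edges of $\pi$ are tree edges, $v_j(\pi)$ is a tree vertex with incoming weight $W_j(\pi)$. If $v_j(\pi)$ had fewer than $r$ tree-children, these would comprise \emph{all} forward directions with weight exceeding $W_j(\pi)$ among those not used by $\pi$'s first $j$ edges; since $\pi$'s $(j+1)$-th edge has weight $W_{j+1}(\pi)>W_j(\pi)$ by accessibility, it would automatically be a tree-child, contradicting that $\pi$ leaves at step $j+1$. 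Thus $v_j(\pi)$ has exactly $r$ tree-children, and $\pi$'s $(j+1)$-th edge lies outside this set of $r$ smallest-weight valid directions. Writing $C_j^\pi$ for the event that at least $r$ of the $n-j-1$ other forward directions at $v_j(\pi)$ have weight in $(W_j(\pi),W_{j+1}(\pi))$, a union bound gives
\[
\mathbb{P}\bigl(\pi\text{ accessible},\ \pi\text{ leaves bottom tree within first }k\text{ steps}\bigr)\le\sum_{j=0}^{k-1}\mathbb{P}(\pi\text{ accessible},\ C_j^\pi).
\]

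The crux is the evaluation of $\mathbb{P}(C_j^\pi\mid\pi\text{ accessible})$ via exchangeability. Conditional on $\pi$'s accessibility, the edge-weights of $\pi$ are distributed as the order statistics of $n$ i.i.d.\ Unif$[0,1]$ variables, independent of the $n-j-1$ i.i.d.\ Unif$[0,1]$ weights on the other forward edges at $v_j(\pi)$. Pooling these $2n-j-1$ weights and retaining only the pattern of ``$\pi$-labels'' versus ``other-labels'' in the joint sorted order, this pattern is uniform over its $\binom{2n-j-1}{n}$ arrangements. The event $C_j^\pi$ becomes: between the $j$-th and $(j+1)$-th $\pi$-label there are at least $r$ other-labels. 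A stars-and-bars count with the hockey-stick identity yields
\[
\mathbb{P}(C_j^\pi\mid\pi\text{ accessible})=\frac{\binom{2n-j-1-r}{n}}{\binom{2n-j-1}{n}}=\prod_{i=0}^{r-1}\frac{n-j-1-i}{2n-j-1-i}\le \left(\tfrac12\right)^r.
\]
Since there are $n!$ direct paths $\pi\in\Pi(\emptyset,[n])$, each accessible with probability $1/n!$, summing gives $\mathbb{E}[Y_k]\le k\cdot 2^{-r}$. Taking $r=\lceil \tfrac32\log k\rceil$ makes $k\cdot 2^{-r}$ tend to $0$ as $k\to\infty$; combining with the symmetric top-tree bound yields $\mathbb{E}[X_n-X_{k,n}^*]=o(1)$, completing the proof. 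The main delicate step will be the careful justification of the uniform distribution of the combined label pattern under the accessibility conditioning; the growth constraint $k\ll\log n/\log\log n$ plays no direct role in this first-moment argument and is presumably imposed to support the coupling hypothesis $k^4r^2\log r\ll n$ of \cref{lem:poi-coupling}.
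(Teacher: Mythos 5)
Your high‑level strategy coincides with the paper's: both bound $\E[X_n - X_{k,n}^*]$ and apply Markov's inequality. The paper gets there indirectly, by showing $\E[\lambda_{k,n}] = \E[X_{k,n}^*] \to 1$ as a corollary of \cref{lem:exp-limit1} ($\E[Z_kZ_k']\to 1$) and \cref{lem:trees} ($\E[|\lambda_{k,n}-Z_kZ_k'|]\to 0$), and then uses $\E[X_n]=1$. You attempt a self‑contained combinatorial estimate of $\E[X_n - X_{k,n}^*]$, which is an appealing alternative — the resulting bound $\bigO(k\,2^{-r})$ even matches the $1-(1-2^{-r})^{2k}$ implicit in the paper's route.

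However, your case analysis rests on a misreading of the tree construction, and this leaves a genuine gap. When choosing the $(j+1)$-st step at $v_j(\pi)$, the tree excludes the $r$ ``best'' directions selected at \emph{each} of the $j$ ancestors, i.e.\ $jr$ directions in total — not merely the $j$ directions that $\pi$ itself traversed. (See ``excludes the $r$ directions that had the smallest weights in the first step \dots excluding the $2r$ `best' directions of its ancestors,'' and the $n-(\ell-1)r$ appearing in \eqref{eq:coupling-N}.) Consequently, your claim ``if $v_j(\pi)$ had fewer than $r$ tree-children, \dots\ $\pi$'s $(j+1)$-th edge would automatically be a tree-child'' is false, and more importantly the inclusion $\{\pi\text{ leaves the tree at step }j+1\}\subseteq C_j^\pi$ fails: $\pi$ can leave the tree because $d_{j+1}$ is an \emph{excluded} (ancestor‑chosen but not traversed) direction, with no $r$ competing weights in $(W_j(\pi),W_{j+1}(\pi))$ at all. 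So the union bound $\P(\pi\text{ acc.},\ \pi\text{ leaves})\le\sum_j\P(\pi\text{ acc.},\ C_j^\pi)$ does not hold as stated. One expects this omitted ``excluded‑direction'' contribution to be of order $k^2 r/n$, hence still $o(1)$ under the lemma's hypotheses, but establishing that rigorously requires tracking the ancestor selections and is not a trivial addendum. For the same reason, your closing remark that the constraint $k\ll\log n/\log\log n$ ``plays no direct role in this first-moment argument'' is premature: the term you dropped does depend on $n$ through these cross‑level interactions.
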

Thus it will suffice to consider only the ``tree paths" counted by $X_{k,n}^{*}$. Let $E_{k}^{*}$ denote the set of edges in the first and last $k$ levels of the hypercube, that is,
\[
E^*_k=\bigcup_{\ell\in[k]} (E_{\ell}\cup E_{n-\ell+1}).
\]
Let $\lambda_{k,n} = \E[X_{k,n}^{*}|(W(e))_{e \in E_{k}^{*}}]$. (The definitions of the trees and the random variable $X_{k,n}^{*}$ will be made precise in \cref{sec:trees}.) We remark that $\lambda_{k,n}$ is a random variable which depends only on the edge weights in the first and last $k$ levels. The main result of \cref{sec:trees} is the following:

\begin{lemma}\label{lem:trees-main}
    There exists a coupling so that, $\lambda_{k,n} \xrightarrow{\P} ZZ'$ as $1\ll k\ll \frac{\log n}{\log\log n}\to\infty$, where $Z,Z'\sim\text{Exp}(1)$ independently.
\end{lemma}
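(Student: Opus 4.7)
The plan is to write $\lambda_{k,n}$ explicitly as a double sum over pairs of leaves of the two trees, factor it (up to $o(1)$) as a product of two independent sums---one for each tree---and then apply \cref{lem:poi-coupling} to identify each factor as the additive martingale of a branching random walk whose limit is $\text{Exp}(1)$. Concretely, let $\mathcal{L}$ and $\mathcal{L}'$ be the leaf sets of the bottom and top trees. For $v \in \mathcal{L}$ let $L_v$ be the weight of the edge entering $v$ (the maximum weight of the bottom-tree path from $\emptyset$ to $v$), and for $v' \in \mathcal{L}'$ let $U_{v'}$ be the weight of the edge leaving $v'$ toward $[n]$ (the minimum weight of the top-tree path from $v'$ to $[n]$). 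Given $(W(e))_{e \in E_k^*}$, the tree paths are deterministic, and a path in $\Pi_k^*$ through $v$ and $v'$ requires $v\subset v'$ together with an accessible direct middle segment of length $n-2k$ whose weights lie in $(L_v,U_{v'})$. Independence of the middle weights from $(W(e))_{e\in E_k^*}$, together with the volume of a simplex calculation applied to the $(n-2k)!$ direct middle paths, yields
\[
\lambda_{k,n} \;=\; \sum_{v\in\mathcal{L}}\sum_{\substack{v'\in\mathcal{L}'\\ v\subset v'}}(U_{v'}-L_v)_+^{n-2k}.
\]

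With $r = \lceil\tfrac{3}{2}\log k\rceil$ and $k\ll\log n/\log\log n$, the tree construction forces $\max_v L_v$ and $\max_{v'}(1-U_{v'})$ to be $O(kr/n) = O(\log n/n)$ w.h.p. The elementary estimate $(1-x)^m = e^{-mx}(1+O(mx^2))$ with $m=n-2k$ and $x=L_v+(1-U_{v'})$ then gives a uniform relative error of $O((k\log k)^2/n) = o(1)$, and the constraint $v\subset v'$ (equivalently $v\cap([n]\setminus v')=\emptyset$) fails for only a $O(k^2/n)=o(1)$ fraction of pairs since $|v|=|[n]\setminus v'|=k$. Hence, setting
\[
Z_{k,n} \;:=\; \sum_{v\in\mathcal{L}}e^{-(n-2k)L_v}, \qquad Z'_{k,n} \;:=\; \sum_{v'\in\mathcal{L}'}e^{-(n-2k)(1-U_{v'})},
\]
we obtain $\lambda_{k,n} = (1+o(1))\,Z_{k,n}\cdot Z'_{k,n}$, with $Z_{k,n}$ and $Z'_{k,n}$ independent since they depend on disjoint edge sets.

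Next, by \cref{lem:poi-coupling} we may couple the tree weights so that $(n-2k)L_v$ agrees up to $o(1)$ with the position $T_v$ of particle $v$ at generation $k$ of a branching random walk in which every parent has $r$ children at the partial sums $E_1,\,E_1+E_2,\,\ldots,\,E_1+\cdots+E_r$ of i.i.d.\ $\text{Exp}(1)$ increments; analogously for the top tree. A direct computation gives
\[
\E\!\left[\sum_{v\in\mathrm{gen}(k+1)}\!\!e^{-T_v}\ \bigg|\ \mathcal{F}_k\right] \;=\; (1-2^{-r})\sum_{v\in\mathrm{gen}(k)}\!\!e^{-T_v},
\]
so with $r=r(k)\to\infty$ fast enough that $k\cdot 2^{-r}\to 0$, the sum $\sum_v e^{-T_v}$ is asymptotically a nonnegative martingale with mean tending to $1$. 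An $L^2$-bound for $Z_{k,n}$, obtained by decomposing pairs of leaves according to their common-ancestor level (in the spirit of the second-moment calculation of \cref{sec:second_moment}), supplies uniform integrability and a.s.\ convergence to a limit $Z$ with $\E[Z]=1$. As $r\to\infty$, the offspring at each node converge to a rate-$1$ Poisson process $\{S_i\}$ on $[0,\infty)$, so $Z$ satisfies the distributional fixed-point identity $Z \stackrel{d}{=} \sum_{i\ge 1} e^{-S_i}Z^{(i)}$ with i.i.d.\ copies $Z^{(i)}$ of $Z$. By the Laplace functional of the Poisson process, $\phi(t):=\E[e^{-tZ}]$ satisfies $\log\phi(t)=\int_0^t(\phi(u)-1)\,du/u$, equivalently the ODE $t\phi'(t) = \phi(t)(\phi(t)-1)$ with $\phi(0)=1$, whose unique solution is $\phi(t)=1/(1+t)$---the Laplace transform of $\text{Exp}(1)$. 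The symmetric argument gives $Z'\sim\text{Exp}(1)$ independently of $Z$, so $\lambda_{k,n}\xrightarrow{\P} ZZ'$.

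The main obstacle is this final identification of the limiting law: while a.s.\ convergence of the (asymptotically) additive martingale is routine once the $L^2$ bound is in hand, pinning down $Z\sim\text{Exp}(1)$ requires careful joint handling of the limits $k,r\to\infty$ under the coupling and a clean passage from the finite-$r$ BRW to its Poisson-offspring limit at the level of the Laplace transform (alternatively, an inductive moment calculation verifying $\E[Z^p] = p!$ for all $p\ge 1$).
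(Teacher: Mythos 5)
Your proposal follows essentially the same route as the paper: factor $\lambda_{k,n}\approx Z_{k,n}\cdot Z'_{k,n}$ into independent per-tree functionals (this is the paper's Lemma~\ref{lem:trees}), pass to the branching-random-walk approximation via the coupling of Lemma~\ref{lem:poi-coupling} (the paper's Corollary~\ref{lem:exp-limit1}), and then identify the limit of the additive martingale as $\text{Exp}(1)$ via a distributional fixed-point that yields the ODE $t\phi'(t)=\phi(t)(\phi(t)-1)$ (the paper's Lemma~\ref{lem:exp-limit2}). The only genuine variation is in how you derive that ODE: you take the $r\to\infty$ Poisson-process limit at each node and invoke the Laplace functional to get $\log\phi(t)=\int_0^t(\phi(u)-1)\,du/u$, whereas the paper peels off the first offspring to get the more elementary smoothing identity $Z\stackrel{d}{=}U(Z_1'+Z_2')$ with $U\sim\text{Unif}[0,1]$, which after taking MGFs gives the same differential equation. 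Both are valid; the paper's two-term recursion is a bit more elementary and avoids any limit exchange between $k$ and $r$. Two small remarks worth flagging: (i) the singular ODE with $\phi(0)=1$ also admits the constant solution $\phi\equiv 1$, so the normalization $\E[Z]=1$ (which you correctly propose to secure via an $L^2$ bound, and which is implicit but not explicitly justified in the paper's martingale step) is essential to pin down $\phi(t)=1/(1+t)$; (ii) the claim that dropping the $v\subset v'$ constraint costs only a $(1+o(1))$ factor needs the averaging argument the paper carries out in the proof of Lemma~\ref{lem:trees} (the error term $\varepsilon_1$), since a vanishing fraction of pairs is not automatically a vanishing fraction of the weighted sum---but this is a routine detail, not a conceptual gap, and your sketch is sound.
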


The interpretation of \cref{lem:trees-main} is that each side of the hypercube contributes an exponentially distributed factor to the conditional expectation of $X_{k,n}^{*}$. 

Using the Chen-Stein method, we prove the following lemma about the middle segment in \cref{sec:middle-part}:

\begin{lemma}\label{lem:middle}
	Let $1\ll k\ll n$ and define $\lambda_{k,n}=\E\left[X_{k,n}^{*}\ \middle|\ (W(e))_{e\in E_k^*}\right]$. There exists a coupling between $X_{k,n}^{*}$ and a (mixed) Poisson random variable $Y_{k,n}$ with parameter $\lambda_{k,n}$, so that
    \[
    \P(X_{k,n}^{*}\neq Y_{k,n})\to0.
    \]
\end{lemma}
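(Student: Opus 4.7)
The plan is to apply the Chen--Stein method conditionally on $\cF:=\sigma((W(e))_{e\in E_k^*})$, the $\sigma$-algebra generated by the tree-segment weights. Given $\cF$, both $\Pi_k^*$ and $\lambda_{k,n}$ are deterministic, while the randomness in the indicators $\{I_\pi\}_{\pi\in\Pi_k^*}$ comes only from the i.i.d.\ Uniform$[0,1]$ weights on the middle edges (those at levels $k+1,\ldots,n-k$). Hence, for $\pi,\pi'\in\Pi_k^*$ whose middle portions share no edge, $I_\pi$ and $I_{\pi'}$ are conditionally independent given $\cF$. Choosing the dependency neighborhood
\[
B_\pi=\{\pi'\in\Pi_k^*:\pi \text{ and } \pi' \text{ share a middle edge}\},
\]
the Chen--Stein inequality yields
\[
d_{TV}\bigl(\cL(X_{k,n}^*\mid\cF),\,\mathrm{Poi}(\lambda_{k,n})\bigr)\le b_1+b_2,
\]
with $b_1=\sum_{\pi\in\Pi_k^*}\sum_{\pi'\in B_\pi}p_\pi p_{\pi'}$, $b_2=\sum_{\pi\in\Pi_k^*}\sum_{\pi'\in B_\pi\setminus\{\pi\}}\P(I_\pi=I_{\pi'}=1\mid\cF)$, and $p_\pi:=\P(I_\pi=1\mid\cF)$; the third Chen--Stein term vanishes by the conditional independence noted above.

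To establish the claimed coupling it suffices to show $\E[b_1+b_2]\to 0$: the optimal total-variation couplings within each $\cF$-fiber then assemble into a global coupling of $X_{k,n}^*$ and $Y_{k,n}\sim\mathrm{Poi}(\lambda_{k,n})$ with expected disagreement tending to zero. Since $p_\pi$ is the conditional probability that $m:=n-2k$ independent uniforms are increasing and lie in a fixed interval determined by $\cF$ (the interval between the last bottom-tree weight and the first top-tree weight along $\pi$), $p_\pi\le 1/m!$. Using the set-union bound $\unit\{\pi,\pi'\text{ share a middle edge}\}\le\sum_{e}\unit\{e\in\pi\cap\pi'\}$ (sum over middle edges $e$) together with $\unit\{\pi,\pi'\in\Pi_k^*\}\le 1$, one obtains that $\E[b_2]$ is bounded by $\sum_{\pi\ne\pi'}\P(I_\pi=I_{\pi'}=1)$ restricted to pairs of direct paths from $\emptyset$ to $[n]$ sharing a middle edge. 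This is precisely the contribution to $\E[X_n^2]$ coming from path-pairs that are \emph{not} middle-edge-disjoint, and the pair-overlap analysis underlying \cref{lem:pair_sums} (reflecting the observation in the proof overview that accessible path-pairs are w.h.p.\ middle-edge-disjoint) shows this contribution is $o(1)$ whenever $1\ll k\ll n$. The term $\E[b_1]$ is handled by a parallel strata decomposition: $p_\pi p_{\pi'}\le 1/(m!)^2$, and the number of pairs in $\Pi_k^*$ through a common middle edge is controlled by the tree sizes $r^k=n^{o(1)}$ together with the binomial/factorial counts of direct-path completions, yielding the same factorial savings as in $\E[b_2]$.

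The main technical obstacle is the overlap bookkeeping for $\E[b_2]$ (and, in parallel, $\E[b_1]$). Pairs of middle-overlapping direct paths come in many shapes, classified by the levels, number, and total size of their overlapping segments; each shape requires a combinatorial enumeration weighted against the factorial factor produced by the increase constraint on the shared weights, and one must verify that every stratum contributes $o(1)$ uniformly, without being inflated by the randomness of $\cF$. This parallels the analyses in \cref{sec:second_moment} and \cref{sec:sm_appendix}; the restriction to $\Pi_k^*$ only helps, since the subpolynomial tree sizes $r^k$ ensure that the tree portions contribute negligibly while the increase constraints on the middle part of length $\approx n$ provide the dominant factorial savings.
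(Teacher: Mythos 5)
Your proposal follows the same strategy as the paper: apply the Chen--Stein theorem conditionally on $\cF=\sigma((W(e))_{e\in E_k^*})$ with dependency neighborhood given by middle-edge overlap, then show the two error terms vanish in expectation, handling the joint-moment term $b_2$ by dropping the restriction to $\Pi_k^*$ and invoking the second-moment analysis of \cref{lem:pair_sums}. Your treatment of $b_2$ and the reduction to pairs sharing a middle edge (hence $g\geq 2$ gaps or $s>k$ shared edges) is exactly the paper's argument.

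The one place where you are noticeably looser is $b_1$. The paper does \emph{not} show $\E[b_1]\to 0$ directly with the crude bound $p_\pi p_{\pi'}\le 1/(m!)^2$; instead it keeps the $\min\{1,1/\lambda_{k,n}\}$ prefactor from Chen--Stein, factors out $\frac{1}{\lambda_{k,n}}\sum_\pi p_\pi = 1$, and bounds $\max_\pi \sum_{\pi'\in\Gamma(\pi)} p_{\pi'}$ uniformly. The key computation is that for each pair of tree endpoints $(u,u')$ and each overlap level $\ell$, the number of $\pi'\in\Gamma(\pi;u,u')$ passing through level $k+\ell$ is $(\ell-1)!(n-2k-\ell)!$, and summing $\binom{n-2k-1}{\ell-1}^{-1}$ over $\ell$ yields $\frac{|\Gamma(\pi;u,u')|}{(n-2k)!}\lesssim \frac{2}{n}$. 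Crucially this bound is then re-weighted by the \emph{actual} conditional probabilities $w(u,u')^{n-2k}$, giving $\max_\pi\sum_{\pi'\in\Gamma(\pi)} p_{\pi'} = \bigO(\lambda_{k,n}/n)$ and hence $\E[b_1/\lambda_{k,n}]=\bigO(1/n)$ for all $1\ll k\ll n$. Your crude $1/(m!)^2$ bound combined with $|\Pi_k^*|\lesssim r^{2k}m!$ only gives $b_1 = \bigO(r^{4k}/n)$, which is $o(1)$ only in the narrower regime $k\ll\log n/\log\log n$ where $r^{4k}=n^{o(1)}$ — sufficient for the application, but not for the stated range $1\ll k\ll n$, and you would additionally need to justify (e.g.\ via Jensen and \cref{lem:2nd-moment}) that $\E[\lambda_{k,n}^2]$ stays bounded if you insist on controlling $\E[b_1]$ without the $1/\lambda$ factor. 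I would suggest either retaining the $\min\{1,1/\lambda_{k,n}\}$ factor as the paper does, or replacing the $1/(m!)^2$ bound with the sharper endpoint-weighted estimate.
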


The above lemmas can be combined to prove our main result:

\begin{proof}[Proof of~\cref{thm:main}]
    We first use~\cref{lem:approximation} to write
    \[
    \P(X_n=x)=\P(X_n=x=X_{k,n}^{*})+\P(X_n=x,X_{k,n}^{*}\neq x)=\P(X_{k,n}^{*}=x)+o(1).
    \]
    Then, we use~\cref{lem:middle} to write
    \[
    \P(X_{k,n}^{*}=x)=\E\left[e^{-\lambda_{k,n}}\frac{\lambda_{k,n}^x}{x!}\right]+o(1).
    \]
    Since $e^{-\lambda}\lambda^x$ is bounded, we can apply~\cref{lem:trees-main} and the continuous mapping theorem to obtain
    \[
    \E\left[e^{-\lambda_{k,n}}\frac{\lambda_{k,n}^x}{x!}\right]\to\E\left[e^{-ZZ'}\frac{(ZZ')^x}{x!}\right],
    \]
    which completes the proof. The alternative expression~\eqref{eq:gompertz-expression} is derived in~\cref{lem:gompertz}. 
\end{proof}

\section{The second moment}\label{sec:second_moment}

Throughout this section, we let $\pi_{n} \in \Pi(\emptyset, [n])$ denote the \textit{canonical} direct path from $\emptyset$ to $[n]$ given by
$$
	\emptyset, [1], [2],\dots,[n].
$$
Consider any $\pi \in \Pi(\emptyset, [n]).$ We will encode $\pi$ relative to $\pi_{n}$. First, we specify the \textit{gap vector} $\bm{a}$ of $\pi$ with respect to $\pi_{n}$. If $\pi$ and $\pi_{n}$ do not share any edges, then $\bm{a}$ has only 1 coordinate, $a_0=n$. Assume then that these paths overlap at some point. The coordinates of $\bm{a}$ are defined as follows: let $a_{0}$ be the size of the gap, measured by number of edges of $\pi$ (or $\pi_n$), between vertex $\emptyset$ and the bottom vertex on the first edge $e_{1}$ which is shared by $\pi$ and $\pi_{n}$. For $i \geq 1$, let $a_{i}$ be the gap between the top vertex of the $i$th shared edge $e_{i}$ and the bottom vertex of the $(i+1)$st shared edge $e_{i+1}$, or $[n]$ if there are no more shared edges. If $\pi$ shares exactly $s$ edges with $\pi_{n}$, then $\bm{a}$ has exactly $s+1$ coordinates, and these coordinates sum to $n-s$. Moreover, $a_{i} \neq 1$ for all $i$, but we can have $a_{i} = 0$. In particular, $\pi_n$ itself has a gap vector of length $n+1$ and consists of only zeros. See Figure~\ref{fig:cube_paths} for an illustrated example.

\begin{figure}[h]
    \centering
    \includesvg[width=0.5\textwidth]{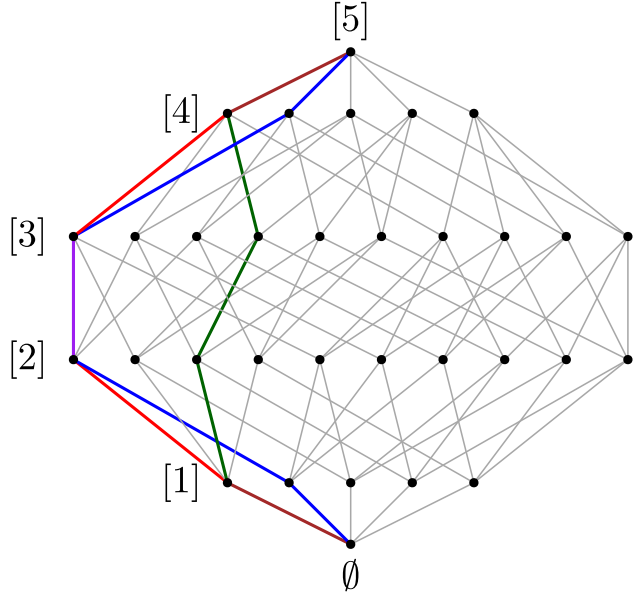}
    \caption{The hypercube with $n=5$ with its canonical path $\pi_{5}$ (red) along the left boundary. The blue path shares one edge (purple) with $\pi_{5}$ and has gap vector $\bm{a} = (2,2)$; the green path shares two edges (brown) with $\pi_{5}$ and has gap vector $\bm{a}=(0,3,0)$.}
    \label{fig:cube_paths}
\end{figure}

Second, we specify the subpaths that $\pi$ follows between its edge-intersections with $\pi_{n}$. For $i = 0,1,\dots$, the subpath of $\pi$ which traverses the $i$th gap is (isomorphic to) a path in an $a-i$-dimensional hypercube which is edge disjoint from the canonical path $\pi_{a_{i}}$. Thus we may represent the collection of gap subpaths with a tuple $\bm{G} = (\bar{\pi}_{0}, \bar{\pi}_{1},\dots,\bar{\pi}_{s})$, where $\bar{\pi}_{i}$ is a path of length $a_{i}$ from $\emptyset$ to $[a_{i}]$ in the $a_i$-dimensional hypercube, which is edge disjoint from $\pi_{a_{i}}$. The path $\pi$ can be completely recovered from its pair $(\bm{a},\bm{G})$. Moreover, it is clear that any such pair $(\bm{a},\bm{G})$ specifies a unique path $\pi$ which shares exactly $s$ edges with $\pi_{n}$. 

We can express the probability that a pair of paths is accessible using the gap vector $\bm{a}$.

\begin{lemma}\label{lem:int_probability}
Let $\pi$ be a path which has gap vector $\bm{a} = (a_{0},a_{1},\dots,a_{s})$ relative to $\pi_{n}$. Then the probability that both $\pi$ and $\pi_{n}$ are accessible is 
$$
	\E[I_{\pi}I_{\pi_{n}}] = \frac{\prod_{i=0}^{s}\binom{2a_{i}}{a_{i}}}{(2n-s)!}.
$$
\end{lemma}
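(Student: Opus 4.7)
The plan is to exploit the exchangeability of the i.i.d.\ continuous edge weights: the event $\{I_\pi I_{\pi_n}=1\}$ depends only on the relative order of the weights on the $2n-s$ distinct edges of $\pi\cup\pi_n$ (the $s$ shared edges plus $2(n-s)$ unshared edges), and every one of the $(2n-s)!$ such orderings is equally likely. Hence I would reduce the claim to the purely combinatorial count of how many linear orders of these $2n-s$ edges make both $\pi$ and $\pi_n$ increasing, so that
\[
\E[I_\pi I_{\pi_n}] = \frac{\#\{\text{valid orderings}\}}{(2n-s)!}.
\]

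The first key step is to argue that the ranks of the shared edges are rigidly determined by the requirement that both paths be accessible. Since $e_1,\dots,e_s$ appear in the same order along both $\pi$ and $\pi_n$, they must receive strictly increasing weights; moreover, the $2a_i$ edges of gap $i$ (that is, the $a_i$ edges of $\pi$ and the $a_i$ edges of $\pi_n$ lying between the $i$th and $(i+1)$st shared edge) must all carry weights strictly between $W(e_i)$ and $W(e_{i+1})$, with the conventions $W(e_0)=0$ and $W(e_{s+1})=1$. Concatenating these constraints forces gap $0$'s $2a_0$ edges to occupy ranks $1,\dots,2a_0$, edge $e_1$ to occupy rank $2a_0+1$, gap $1$'s $2a_1$ edges to occupy the next $2a_1$ ranks, and so on, so the block of ranks assigned to each gap and each shared edge is uniquely determined.

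The second step is to count internal arrangements within each gap. Once gap $i$'s block of $2a_i$ ranks is fixed, the $a_i$ $\pi$-edges must be given ranks in their prescribed path order (increasing along $\pi$), and likewise for the $a_i$ $\pi_n$-edges. Thus a valid arrangement is equivalent to choosing which $a_i$ of the $2a_i$ ranks go to $\pi$, contributing $\binom{2a_i}{a_i}$ possibilities. Independence across gaps gives a total count of $\prod_{i=0}^{s}\binom{2a_i}{a_i}$ valid orderings, and dividing by $(2n-s)!$ yields the formula.

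I do not anticipate a real obstacle here; the argument is essentially careful bookkeeping, with the one delicate point being to justify that the shared edges are truly locked into their specific rank slots rather than floating among the gap edges. As a sanity check, the degenerate case $s=0$ (with $\bm{a}=(n)$, so $\pi$ is edge-disjoint from $\pi_n$) gives $\binom{2n}{n}/(2n)!=1/(n!)^2$, matching the independent product of the two marginal accessibility probabilities; and $\pi=\pi_n$ corresponds to $s=n$ with every $a_i=0$, producing $1/n!$ as expected.
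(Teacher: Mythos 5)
Your proof is correct and takes essentially the same approach as the paper's: reduce to counting uniformly random orderings of the $2n-s$ edges of $\pi\cup\pi_n$, observe that the ranks of the shared edges (and the rank block for each gap) are forced, and count $\binom{2a_i}{a_i}$ ways to allocate each gap's block between $\pi$ and $\pi_n$.
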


\begin{proof}
The edge weights induce a random permutation of the edges of $\pi_{n} \cup \pi$, and either path is increasing with respect to the edge weights if and only if it is increasing with respect to the order of the corresponding permutation. There are $(2n-s)!$ possible permutations of the edges of $\pi_{n} \cup \pi$. Thinking of a permutation as a labeling of the edges with labels from the set $\{1,2,\dots,2n-s\}$, we observe that in any permutation for which both $\pi_{n}$ and $\pi$ are increasing, the labels of the $s$ shared edges are completely determined, i.e., there is exactly $1$ way these edges can be labeled. Similarly, the set of labels which can be assigned to edges in the $i$th gap (of both $\pi$ and $\pi_{n}$) is also determined for $i=0,1,\dots,s$. For each $i$, there are $2a_{i}$ labels to assign to the edges in gap $i$, and precisely $\binom{2a_{i}}{a_{i}}$ assignments result in both $\pi_{n}$ and $\pi$ being increasing across the gap. Indeed, once we distribute labels between $\pi_n$ and $\pi$, there is only one way to assign them to edges. Hence there are $\prod_{i=0}^{s}\binom{2a_{i}}{a_{i}}$ permutations for which both $\pi_{n}$ and $\pi$ are increasing.
\end{proof}

The nonzero coordinates of a gap vector $\bm{a} = (a_{0}, a_{1}, \dots,a_{s})$ correspond to the \textit{(nontrivial) gaps} between $\pi$ and $\pi_{n}$, i.e., segments where $\pi$ and $\pi_{n}$ diverge from one another. If $\pi$ has a gap vector of length $s+1$ with $g$ nonzero coordinates, we say that $\pi$ and $\pi_{n}$ share $s$ edges with $g$ gaps. More generally, we may speak of arbitrary pairs of paths $(\pi,\pi')$ from $\Pi(\emptyset,[n])$ which share $s$ edges with $g$ gaps. For a path $\pi$, we define
$$
	\mathcal{P}_{s,g}(\pi) = \{\pi'\,:\,\pi \in \Pi(\emptyset,[n]),\,\pi\text{ and }\pi'\text{ share }s\text{ edges with }g\text{ gaps}\}
$$
and we let 
\begin{equation}\label{eq:Psg}
	\mathcal{P}_{s,g} = \bigcup_{\pi \in \Pi(\emptyset,[n])}\bigcup_{\pi' \in \mathcal{P}_{s,g}(\pi)}\Big\{(\pi,\pi') \Big\}.
\end{equation}
So $\mathcal{P}_{s,g}$ is the set of all pairs of paths $(\pi,\pi')$ which share $s$ edges with $g$ gaps. Note that $\mathcal{P}_{s,g}$ is nonempty precisely when $s \in \{0,1,2,\dots,n-2\}$ and $1 \leq g \leq \min\left\{s+1, \left\lfloor\frac{n-s}{2}\right\rfloor \right\}$, or $s = n$ and $g = 0$. (It is impossible for a pair of paths to share exactly $n-1$ edges.) 

For $s \in \{0,1,\dots,n-2\}$ and $1 \leq g \leq\min\left\{s+1, \left\lfloor \frac{n-s}{2}\right\rfloor \right\}$, or $s = n$ and $g = 0$, define
\begin{equation}\label{eq:csg}
    c_{s,g} = \sum_{(\pi,\pi') \in \mathcal{P}_{s,g}}\E[I_{\pi}I_{\pi'}].
\end{equation}
Note that by symmetry we may write 
\begin{equation}\label{eq:sm_exp3}
    c_{s,g} = n!\sum_{\pi \in \mathcal{P}_{s,g}(\pi_{n})}\E[I_{\pi_{n}}I_{\pi}] = \sum_{\pi \in \mathcal{P}_{s,g}(\pi_{n})}\E[I_{\pi}\,|\,I_{\pi_{n}} = 1].
\end{equation}
Thus we have the following simple interpretation of the term $c_{s,g}$: conditioned on the path $\pi_{n}$ being accessible, we expect $c_{s,g}$ accessible paths which share $s$ edges and have $g$ gaps with $\pi_{n}$. \cref{lem:pair_sums} below computes the second moment of $X_n$ by carefully summing the terms $c_{s,g}$ over all valid $s$ and $g$. In light of the above, we may interpret \cref{lem:pair_sums} as stating that, conditioned on $\pi_{n}$ being accessible, for any $k = k(n)$ tending to infinity, in expectation there are $4 + o(1)$ accessible paths that share at most $k = k(n)$ edges with $\pi_{n}$ and have a single gap. Moreover, we show that, w.h.p., there are no accessible paths which have more than one gap, or which share more than $k$ edges with $\pi_{n}$.

\begin{lemma}\label{lem:pair_sums}
	The following three properties hold:
	\begin{enumerate}
		\item $\ds\sum_{s=1}^{n-2}\sum_{g=2}^{\min\left\{s+1, \left\lfloor\frac{n-s}{2}\right\rfloor \right\}}c_{s,g} = o(1)$;
		\item for any $k = k(n) \to \infty$, $\ds\sum_{s=k}^{n-2}c_{s,1} = o(1)$; and,
		\item for any $k = k(n) \to \infty$, $\ds\sum_{s=0}^{k}c_{s,1} = 4 + o(1)$.
	\end{enumerate}
\end{lemma}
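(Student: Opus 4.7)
The plan is to reduce the computation of $c_{s,g}$ to an explicit formula involving \emph{indecomposable permutations} and then carry out asymptotic analysis in each of the three regimes. A direct path in the $a$-dimensional hypercube from $\emptyset$ to $[a]$ corresponds to a permutation of $[a]$, and is edge-disjoint from the canonical path precisely when the associated permutation $\sigma$ is \emph{indecomposable}, i.e., satisfies $\sigma([k])\neq[k]$ for every $1\leq k<a$. Let $\mathrm{Ind}_a$ denote the number of indecomposable permutations of $[a]$; this is a classical quantity satisfying $\mathrm{Ind}_a/a!\to 1$ as $a\to\infty$, with convergence of order $1/a$. Combined with \cref{lem:int_probability} and the symmetry $c_{s,g}=n!\sum_{\pi\in\mathcal{P}_{s,g}(\pi_n)}\E[I_{\pi_n}I_\pi]$, and after factoring out the $\binom{s+1}{g}$ choices of gap positions within the length-$(s+1)$ gap vector, I get
\[
c_{s,g} = \binom{s+1}{g}\,\frac{n!}{(2n-s)!}\sum_{\substack{a_1+\cdots+a_g=n-s\\ a_j\geq 2}}\,\prod_{j=1}^g \mathrm{Ind}_{a_j}\binom{2a_j}{a_j}.
\]

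For the $g=1$ case, relevant to Parts~2 and~3, the inner sum collapses to the single term $a_1=n-s$, and telescoping factorials gives
\[
c_{s,1} = (s+1)\,\frac{\mathrm{Ind}_{n-s}}{(n-s)!}\,\prod_{i=0}^{s-1}\frac{n-i}{2n-s-i}.
\]
For each fixed $s$ the first factor tends to $1$ and the product tends to $2^{-s}$, so $c_{s,1}\to (s+1)/2^s$. Since $\sum_{s\geq 0}(s+1)/2^s=(1-1/2)^{-2}=4$, Part~3 will follow by dominated convergence once I establish a uniform tail bound. The elementary estimate $\prod_{i=0}^{s-1}\frac{n-i}{2n-s-i}\leq 2^{-s}\exp\!\left(\frac{s^2}{2(n-s)}\right)$, valid for $s\leq n/2$, together with a standard entropy argument that gives exponential decay in $n$ for $s>n/2$, supplies such a bound. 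The same uniform bound immediately yields Part~2, since $\sum_{s\geq k}(s+1)/2^s\to 0$ as $k\to\infty$.

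For Part~1, I need to show that each additional gap costs a multiplicative factor $o(1)$. Using $\mathrm{Ind}_{a_j}\leq a_j!$, a direct analysis shows that the inner sum is dominated (up to a constant depending on $g$) by extremal compositions in which one part has size $n-s-2(g-1)$ and the remaining $g-1$ parts are at the minimum size $2$. For such a composition, $\prod \mathrm{Ind}_{a_j}\binom{2a_j}{a_j}$ is of order $(n-s-2(g-1))!\cdot 4^{n-s}(3/2)^{g-1}/\sqrt{n-s}$, which relative to $\mathrm{Ind}_{n-s}\binom{2(n-s)}{n-s}$ contributes an extra factor of order $(n-s)^{-2(g-1)}$. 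Together with the combinatorial prefactor $\binom{s+1}{g}/(s+1)$, this gives $c_{s,g}\leq c_{s,1}\cdot(Cs/(n-s)^2)^{g-1}$ for $s\leq n/2$, and an exponentially small bound for larger $s$. Summing first over $g\geq 2$ and then over $s$ yields $\sum_{s,g\geq 2}c_{s,g}=\bigO(1/n^2)=o(1)$. The main obstacle is sharpening the inner sum: the naive Vandermonde estimate $\prod\binom{2a_j}{a_j}\leq\binom{2(n-s)}{n-s}$ is too loose by a factor of $\sqrt{n}$, so one must either handle the extremal compositions by hand or use the convolution identity $\sum_{a_1+\cdots+a_g=m}\prod\binom{2a_j}{a_j}=[x^m](1-4x)^{-g/2}$ (in particular $\sum_{a+b=m}\binom{2a}{a}\binom{2b}{b}=4^m$).
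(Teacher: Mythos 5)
Your overall strategy mirrors the paper's appendix proof: derive an explicit formula for $c_{s,g}$ (your indecomposable-permutation count $\mathrm{Ind}_a$ is exactly the paper's $w_a$, and your formula matches \cref{lem:c_sg}), then bound and sum over the three regimes. Parts~2 and~3 are handled essentially as in the paper — your bound $\prod_{i=0}^{s-1}\frac{n-i}{2n-s-i}\le 2^{-s}\exp\bigl(s^2/(2(n-s))\bigr)$ for $s\le n/2$ is a correct and more elementary substitute for the paper's Stirling-based entropy function $f$ in \cref{lem:exp_term}, and the $w_{n-s}/(n-s)!\to 1$ observation is the same one the paper uses in \cref{lem:dominant_term}. (One small point: for a literal dominated-convergence argument in Part~3 you need a bound independent of $n$, so you should split at, say, $s\le\sqrt{n}$ where the exponential factor is $1+o(1)$; this is routine.)

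The genuine gap is in Part~1, and you flag it yourself as ``the main obstacle.'' The crucial step is the claim that the inner sum over compositions is dominated by the extremal composition $(n-s-2(g-1),2,\dots,2)$, yielding $c_{s,g}\le c_{s,1}\bigl(Cs/(n-s)^2\bigr)^{g-1}$. This is asserted but not proved, and controlling the implicit ``constant depending on $g$'' uniformly in $g$ requires real work (you are summing over $g$ up to order $n$). Moreover, your proposed fixes are not quite on target: the generating-function identity $\sum_{a_1+\cdots+a_g=m}\prod\binom{2a_j}{a_j}=4^m$ holds only without the $\mathrm{Ind}_{a_j}$ factors and without the $a_j\ge 2$ constraint, so it does not directly apply. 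Your concern that the paper's bound $\prod\binom{2x_i}{x_i}\le\binom{2(n-s)}{n-s}$ is ``too loose by a factor of $\sqrt{n}$'' is also unfounded: the paper circumvents the extremal-composition analysis entirely in \cref{lem:djk} by first writing $\prod x_i!=(n-s)!\binom{n-s}{x_1,\dots,x_g}^{-1}$ and then bounding the sum of reciprocal multinomial coefficients by the count of compositions times its maximum. This produces a weaker ratio, roughly $c_{s,g}\lesssim c_{s,1}\cdot\frac{(2s/(n-s))^{g-1}}{(g-1)!^2}$, which is nevertheless more than enough: summing over $g\ge 2$ gives $\bigO(s/(n-s))$ and then summing over $s$ against the exponential decay of $c_{s,1}$ gives $o(1)$. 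So you do not need the tighter $(s/(n-s)^2)^{g-1}$ rate you were aiming for; the cleaner multinomial manipulation suffices and sidesteps the hard part of your sketch.
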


The proof is heavily computational, so we postpone it to~\cref{sec:sm_appendix}. We remark that as a corollary, \cref{lem:pair_sums} implies \cref{lem:2nd-moment}: 
\begin{align*}
	\E[X_n^{2}] = c_{n,0} + \sum_{s=0}^{n-2}\sum_{g=1}^{\min\left\{s+1,\left\lfloor\frac{n-s}{2}\right\rfloor \right\}}c_{s,g} 
	= 1 + (4 + o(1)) + o(1) = 5 + o(1).
\end{align*}

\section{Tree segments}\label{sec:trees}

In this section, we will prove~\cref{lem:trees-main} in two main steps. First, we construct trees consisting of the edges that are most likely to be part of accessible paths near each endpoint. In~\cref{lem:exp-coupling}, we will introduce a coupling that proves that the weights of these tree edges converge almost surely to sums of exponential random variables. Second, we will construct two random variables $Z_k,Z_k'$ that are functions of the weights in the first and last $k$ levels, respectively. We will prove in \cref{lem:exp-limit} that $Z_k,Z_k'$ converge in probability to two independent exponentially distributed random variables. \cref{lem:trees} additionally proves $\lambda_{k,n}-Z_kZ_k'\stackrel{\P}\to0$, which will lead to the proof of \cref{lem:trees-main}.

\subsection{Tree coupling}

In this section, we construct a coupling between exponential random variables and the `best' weights in the first few levels. To do so, we first transform the uniform weights to exponential random variables. Notice that if $U\sim\text{Unif}([0,1])$, then
\(
-\log(1-U)\sim\text{Exp}(1).
\)
Let us denote these transformed weights by
\begin{equation}\label{eq:exp_transformation}
\widetilde{W}(e)=-\log(1-W(e)).
\end{equation}
A nice advantage of these transformed weights is that among $\ell$ edges, the smallest weight will follow the $\text{Exp}(\ell)$ distribution. Moreover, the difference between the smallest and second smallest weight is $\text{Exp}(\ell-1)$ and so forth. This can be seen from the fact that the remaining $\ell-1$ weights are each $\text{Exp}(1)$ distributed conditioned on being larger than the smallest weight. By the memorylessness property of the exponential distribution, the difference between each of these weights and the smallest weights is independently $\text{Exp}(1)$ distributed, so that the minimum of them is indeed $\text{Exp}(\ell-1)$ distributed.

To construct a tree from the most important edges, we use an approach similar to~\cite{berestycki2016number} and exclude directions that have been taken by an ancestor node. The main difference is that we consider a fixed number of offspring $r$, while they consider weight thresholds. For $i_1,\dots,i_\ell\in[r]$, we denote by $\pi(i_1,\dots,i_\ell;r)$ a path from $\emptyset$ to some $v\in V_\ell$ where, in the first step, we make a step in the direction with the $i_1$-th smallest weight. After that, the path makes a step in the direction of the $i_2$-th smallest weight that is larger than the current weight and excludes the $r$ directions that had the smallest weights in the first step. In the third step, we consider all weights larger than the current weight, excluding the $2r$ `best' directions of its ancestors and pick the $i_3$-rd smallest among those, and so forth. Note that such a path $\pi(i_1,\dots,i_\ell;r)$ may not exist for all $r,\ell$ and $i_1,\dots,i_\ell$. In particular, such a path does not exist if $(\ell-1)r>n$.

Let $\widetilde{W}_{n,r}(i_1,\dots, i_\ell)$ denote the weight of this path (i.e., $\widetilde{W}(e)$ of the last edge $e$ in this path) if the path $\pi(i_1,\dots,i_\ell;r)$ exists and $\widetilde{W}_{n,r}(i_1,\dots, i_\ell)=\infty$ otherwise. We will omit the subscript $n,r$ whenever the relevant values are clear from the context. We will abbreviate $\bm{i}=(i_1,\dots,i_\ell)$ and will denote the \emph{predecessor} of $\bm i\neq(1)$ as
\[
\bm i^-=\begin{cases}
    (i_1,\dots,i_{\ell-1},i_\ell-1)&\text{ if $i_\ell\ge2$,}\\
    (i_1,\dots,i_{\ell-1})&\text{ if $i_\ell=1$,}
\end{cases}
\]
and define the set of predecessors of $\bm i$ as
$$
P(\bm{i})=\{(i_1,\dots,i_{\ell'-1},j)\ :\ \ell'\in[\ell],j\in[i_{\ell'}]\}.
$$
We define
\(
\Delta \widetilde{W}_{n,r}(\bm i)=\widetilde{W}_{n,r}(\bm i)-\widetilde{W}_{n,r}(\bm i^-),
\)
so that
\begin{equation}\label{eq:W(i)_sum}
\widetilde{W}_{n,r}(\bm{i})=\sum_{\bm{j}\in P(\bm{i})}\Delta \widetilde{W}_{n,r}(\bm{j}).
\end{equation}
Given $k$ and $r$, if the path corresponding to $\bm{i} = (i_{1},i_{2},\dots,i_{k}) \in [r]^{k}$ exists, then after transforming back to uniform edge weights using~\eqref{eq:exp_transformation}, the weight on the last edge $e$ of $\bm{i}$ is distributed as
\[
    W_{n,r}(\bm{i}) = 1 - \exp\left(-\sum_{\bm{j} \in P(\bm{i})}\Delta\widetilde{W}_{n,r}(\bm{j}) \right).
\]
We may built a symmetric tree rooted at $[n]$ in the top of the cube in a similar manner as above, except that at each step we choose the largest available weight rather than the smallest. By our construction, if the path $\bm{i} \in [r]^{k}$ exists in the top cube, then for the last edge $e$ on $\bm{i}$ we have
\[
	W_{n,r}(e) = \exp\left(-\sum_{\bm{j}\in P(\bm{i})} \Delta \widetilde{W}'_{n,r}(\bm{j}) \right)
\]
where the $\Delta\widetilde{W}_{n,r}'(\bm{j})$'s are the increments of the top tree. We remark that $\Delta\widetilde{W}_{n,r}'(\bm{j})$ has the same distribution as $\Delta\widetilde{W}_{n,r}(\bm{j})$ for all $\bm{j}$. Moreover, if $k \leq \lfloor n/2 \rfloor$, then the bottom tree and the top tree are independent, as they depend on disjoint edge sets. Most of our attention in this section is devoted to understanding the behavior of the sums $\sum_{\bm{j} \in P(\bm{i})}\Delta\widetilde{W}_{n,r}(\bm{j})$ for $\bm{i} \in [r]^{k}$. To this end, below we describe a useful coupling between for the increments $\Delta\widetilde{W}_{n,r}(\bm{j})$.

We will couple $\Delta \widetilde{W}_{n,r}(\bm{j})$ to independent $\text{Exp}(1)$ random variables $(\Delta(\bm{i}))_{\bm{i}\in\mathbb{N}^\infty}$, where $\mathbb{N}^\infty=\bigcup_{\ell\in\mathbb{N}}\mathbb{N}^\ell$ is the set of integer sequences. For $k,r\in\mathbb{N}$ our coupling is as follows. For $i_1\in[r]$, we couple
\[
\Delta \widetilde{W}_{n,r}(i_1)=\frac{1}{n-(i_1-1)}\Delta(i_1),
\]
(Note that $\frac{1}{n-(i_{1}-1)}\Delta(i_{1})$ is distributed as $\text{Exp}(n-(i_{1}-1))$ for any $i_{1} \in [r]$.) To couple $\Delta \widetilde{W}_{n,r}(\bm i)$ for $\bm i\in[r]^\ell$ with $\ell\ge2$, we first sample a binomial random variable for its parent $\bm i'=(i_1,\dots,i_{\ell-1})$ as
\begin{equation}\label{eq:coupling-N}
N_{n,r}(\bm i')\sim\text{Bin}(n-(\ell-1)r,1-e^{-\widetilde{W}_{n,r}(\bm i')}),
\end{equation}
and then couple
\begin{equation}\label{eq:coupling}
\Delta \widetilde{W}_{n,r}(\bm i)=\begin{cases}
    \frac{\Delta(\bm i)}{n-(\ell-1)r-(i_\ell-1)-N_{n,r}(\bm i')}&\text{ if $N_{n,r}(\bm i')<n-(\ell-1)r-(i_\ell-1)$,}\\
    \infty&\text{ else.}
\end{cases}
\end{equation}
(Note that the parent $\bm i'$ is not (necessarily) the same as the predecessor $\bm i^-$.)

\begin{lemma}\label{lem:exp-coupling}
    The coupling defined by~\eqref{eq:coupling-N} and~\eqref{eq:coupling} is valid.
\end{lemma}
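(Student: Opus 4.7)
My plan is to verify the coupling by a sequential revelation argument: process vertices of the tree in any order respecting the ancestor--descendant partial order (for concreteness, BFS), and at each vertex $\bm i'$ check that the conditional distributions of the newly introduced variables $N_{n,r}(\bm i')$ and $\{\Delta\widetilde W_{n,r}(\bm i):i_\ell\in[r]\}$ match those prescribed by~\eqref{eq:coupling-N} and~\eqref{eq:coupling}. The whole argument will reduce to two standard facts about i.i.d.\ $\text{Exp}(1)$ variables: (i) the count of those falling below a fixed threshold is binomial, and (ii) by memorylessness, their excesses over a threshold conditional on exceeding it are again i.i.d.\ $\text{Exp}(1)$, so consecutive order-statistic gaps among $K$ of them are independent with rates $K, K-1, \dots$

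For the base case at $\emptyset$, the $n$ edges at level $1$ have transformed weights that are i.i.d.\ $\text{Exp}(1)$ by definition, so the first $r$ consecutive order-statistic gaps are independent $\text{Exp}(n), \text{Exp}(n-1), \dots, \text{Exp}(n-r+1)$, and scaling each by its rate produces $r$ independent $\text{Exp}(1)$ variables $\Delta(i_1)$. For the inductive step at a vertex $\bm i' = (i_1, \dots, i_{\ell-1})$ with $\ell \geq 2$, the key structural observation is that the $n - r(\ell-1)$ \emph{available} edges out of $\bm i'$ (those in directions not chosen by any of $\bm i'$'s ancestors) all lie at level $\ell$ and are incident to $\bm i'$. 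Since edges incident to distinct vertices of a common level are disjoint, and all previous revelations at ancestors of $\bm i'$ involved only edges at levels $\le \ell - 1$, the weights on these available edges are i.i.d.\ $\text{Exp}(1)$ and independent of everything revealed so far, including the incoming weight $\widetilde W_{n,r}(\bm i')$.

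Conditioning on $\widetilde W_{n,r}(\bm i')$, the number of available edges with weight at most $\widetilde W_{n,r}(\bm i')$ is thus exactly $\text{Bin}(n - (\ell-1)r, 1 - e^{-\widetilde W_{n,r}(\bm i')})$, validating~\eqref{eq:coupling-N}. Given $N_{n,r}(\bm i')$, the $K = n - r(\ell-1) - N_{n,r}(\bm i')$ eligible edges have weights whose excesses over $\widetilde W_{n,r}(\bm i')$ are i.i.d.\ $\text{Exp}(1)$ by memorylessness, and their consecutive order-statistic gaps are therefore independent with rates $K, K-1, \dots, K-r+1$ whenever $i_\ell \le K$. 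This yields
\[
\Delta\widetilde W_{n,r}(\bm i) \sim \text{Exp}\bigl(n - (\ell-1)r - (i_\ell - 1) - N_{n,r}(\bm i')\bigr),
\]
so setting $\Delta(\bm i)$ equal to the scaled gap produces the desired independent $\text{Exp}(1)$ variables whenever the denominator is positive; when $i_\ell > K$ the relevant order statistic does not exist, and I would let $\Delta(\bm i)$ be an independent $\text{Exp}(1)$ appended to the probability space, which is consistent with the $\infty$ convention in~\eqref{eq:coupling}. Independence of the $\Delta(\bm i)$'s across distinct parents follows from the same edge-disjointness observation applied to distinct level-$(\ell-1)$ vertices.

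I do not expect a serious obstacle. The only delicate point is the bookkeeping that guarantees each revelation step uses genuinely ``fresh'' edges, which is ensured by the disjointness of outgoing edge sets at distinct same-level vertices together with the fact that edges at different levels never coincide; handling the boundary case where the tree cannot be extended at $\bm i'$ (so some $\Delta\widetilde W_{n,r}(\bm i) = \infty$) requires only the minor trick of appending fresh exponentials to extend the coupling.
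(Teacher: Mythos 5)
Your proposal is correct and follows essentially the same approach as the paper: verify at each tree node that the count of weights below the incoming threshold is binomial and that, by memorylessness of the exponential, the successive order-statistic gaps among the eligible excesses are independent exponentials with rates $K, K-1, \dots$, where $K$ is the number of eligible directions. The paper states these distributional facts more tersely without spelling out the BFS revelation order or the edge-disjointness bookkeeping, but the underlying computation is identical; your extra care with the freshness of the level-$\ell$ edges and the $\infty$ boundary case is a harmless elaboration of the same argument.
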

\begin{proof}
    We start with level $\ell=1$ and consider $i_1\in[r]$. If $i_1=1$, then $\Delta \widetilde{W}(1)=\widetilde{W}(1)$ and
    \[
    \P(\widetilde{W}(1)\ge w)=e^{-wn},
    \]
    which corresponds to the tail probability of an exponential random variable with rate $n$, so that $n\Delta \widetilde{W}(1)\sim\text{Exp}(1)$. Now, for $i_1\ge 2$, we know that there are $n-(i_1-1)$ edges from $\emptyset$ to $V_1$ with weight larger than $\widetilde{W}(i_1-1)$, so that
    \[
    \P(\widetilde{W}(i_1)\ge \widetilde{W}(i_1-1)+ w)=e^{-w(n-(i_1-1))},
    \]
    so that indeed $(n-(i_1-1))\Delta \widetilde{W}(i_1)\sim \text{Exp}(1)$.

    For $\ell\ge2$, we first note that among the $n$ directions of the hypercube, $(\ell-1)r$ need to be excluded as each of the $\ell-1$ ancestors have expanded in $r$ unique directions that we exclude for the remainder of the subtree.
    Among the $n-(\ell-1)r$ remaining directions, we need to find the $i_\ell$-th smallest weight that is larger than $\widetilde{W}(i_1,\dots,i_{\ell-1})$. Let $N(i_1,\dots,i_{\ell-1})$ denote the number of edges that are smaller than this. Then, given $\widetilde{W}(i_1,\dots,i_{\ell-1})=w$, we have
    \[
    N(i_1,\dots,i_{\ell-1})\sim\text{Bin}(n-(\ell-1)r,1-e^{-w}).
    \]
    Then, given $N(i_1,\dots,i_{\ell-1})<n-(\ell-1)r-(i_\ell-1)$, $\widetilde{W}(i_1,\dots,i_\ell)$ is the $i_\ell$-th smallest among the $n-(\ell-1)r$ weights that are each at least $\widetilde{W}(i_1,\dots,i_{\ell-1})$, so that
    \[
        (n-(\ell-1)r-(i_\ell-1)-N(i_1,\dots,i_{\ell-1}))\cdot\Delta \widetilde{W}(i_1,\dots,i_\ell)\sim\text{Exp}(1),
    \]
    as required. Whenever $N(i_1,\dots,i_{\ell-1})\ge n-(\ell-1)r-(i_\ell-1)$, the path $\pi(i_1,\dots,i_\ell;r)$ does not exist, so that $\widetilde{W}(i_1,\dots,i_\ell)=\infty$.
\end{proof}

From~\eqref{eq:coupling}, it is clear that we have $n\widetilde{W}_{n,r}(\bm{i}) \geq \sum_{\bm{j} \in P(\bm{i})}\Delta(\bm{j})$ for all $\bm{i} \in [r]^{k}$. In the next lemma, we show a nearly-matching upper bound for the $n\widetilde{W}_{n,r}(\bm{i})$'s which holds with high probability provided $1 \ll r \ll k$.

\begin{lemma}\label{lem:poi-coupling}
    Let $1\ll r\ll k$. There exists a coupling and a sequence $n_{k,r}$, so that with probability at least $1-\frac1{k^2}$, 
    \[
    n\widetilde W_{n,r}(\bm i)\le \frac1k+\sum_{\bm j\in P(\bm i)}\Delta(\bm j)
    \]
    holds for all $\bm i\in[r]^k$ and $n\ge n_{k,r}$. Equivalently,
    \begin{equation}\label{eq:coupling-guarantee}
    \P\left(\exists \bm i\in [r]^k,n\ge n_{k,r}:\ n\widetilde W_{n,r}(\bm i)>\frac1k+\sum_{\bm j\in P(\bm i)}\Delta(\bm j)\right)<\frac1{k^2}.
    \end{equation}
\end{lemma}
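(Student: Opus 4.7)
The plan is to expand each increment from the coupling identity~\eqref{eq:coupling} into the sum of an ``ideal'' value $\Delta(\bm j)$ plus an explicit error term, and then to bound the accumulated error uniformly over $P(\bm i)$. Concretely, for $\bm j$ at level $\ell'$ with parent $\bm j'$ (taking $N_{n,r}(\bm j') := 0$ at level $1$), a direct rearrangement of~\eqref{eq:coupling} gives
\[
n \Delta \widetilde W_{n,r}(\bm j) = \Delta(\bm j) + R(\bm j), \qquad R(\bm j) := \Delta(\bm j) \cdot \frac{D(\bm j)}{n - D(\bm j)},
\]
with $D(\bm j) := (\ell'-1)r + (j_{\ell'}-1) + N_{n,r}(\bm j')$. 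By~\eqref{eq:W(i)_sum}, the claim reduces to proving $\sum_{\bm j \in P(\bm i)} R(\bm j) \leq 1/k$ for every $\bm i \in [r]^k$.

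The first step is to control the $\Delta$'s uniformly. Setting $A := (k+1)\log r + 2\log(3k) = O(k \log r)$, a union bound over the at most $\sum_{\ell \leq k} r^\ell \leq 2r^k$ relevant multi-indices against the $\mathrm{Exp}(1)$ tail yields
\[
\P\bigl(\exists \bm j \in [r]^{\leq k} : \Delta(\bm j) > A\bigr) \leq 2 r^k \cdot e^{-A} \leq \frac{1}{3k^2}.
\]
The second step is to control the $N_{n,r}(\bm j')$'s. A short induction on the level, using~\eqref{eq:coupling} together with $\Delta \leq A$ (and $n$ large enough that the denominators in the induction stay above $n/2$), gives the crude deterministic estimate $n \widetilde W_{n,r}(\bm j') \leq 2krA$ for every $\bm j'$ of level $\leq k$. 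Conditionally, $N_{n,r}(\bm j')$ is then stochastically dominated by $\mathrm{Bin}(n, 2krA/n)$, whose mean is at most $2krA$. A standard Chernoff estimate yields $\P(N_{n,r}(\bm j') > 4krA) \leq \exp(-\Omega(krA))$, and union-bounding over the $\leq 2r^k$ parents gives $\P(\exists \bm j' : N_{n,r}(\bm j') > 4krA) \leq 2r^k \exp(-\Omega(krA)) \leq 1/(3k^2)$, since $krA \gg k \log r$ in the regime $1 \ll r \ll k$.

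On the intersection of these good events, $D(\bm j) \leq kr + 4krA \leq 5krA$ uniformly. Choosing $n_{k,r}$ at least a sufficiently large multiple of $k^3 r^2 A^2 = \Theta(k^5 r^2 (\log r)^2)$ ensures $n - D(\bm j) \geq n/2$ for every $n \geq n_{k,r}$, so $R(\bm j) \leq 2 A D(\bm j)/n \leq 10 kr A^2/n$. Summing over the $\leq kr$ indices in $P(\bm i)$ bounds the total error by $10 k^2 r^2 A^2/n \leq 1/k$, finishing the per-$n$ estimate with failure probability at most $2/(3k^2)$.

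The main technical obstacle will be upgrading this to the uniform-in-$n$ statement demanded by~\eqref{eq:coupling-guarantee}. I would address this by realising all binomials $N_{n,r}(\bm j')$ on a common sequence of i.i.d.\ $\mathrm{Unif}[0,1]$ variables, giving a joint coupling across $n$ under which both good events above are essentially monotone in $n$: the $\Delta$-event is $n$-independent, and under the shared-randomness coupling the Chernoff failure event at $n_{k,r}$ dominates those at all larger $n$ up to constants that are absorbed by further inflating $n_{k,r}$. This bookkeeping is the most delicate aspect of the proof but introduces no new probabilistic ideas beyond those already used in the fixed-$n$ argument.
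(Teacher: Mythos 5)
Your fixed-$n$ argument is essentially sound: the decomposition $n\Delta\widetilde W_{n,r}(\bm j)=\Delta(\bm j)+R(\bm j)$, the uniform bound $\Delta(\bm j)\le A=O(k\log r)$ by a union bound over the $O(r^k)$ indices, and the Chernoff control of the binomials give a correct per-$n$ estimate (the level induction interleaving weight bounds and binomial bounds needs to be stated carefully to avoid circularity, and your threshold $\Theta(k^5r^2(\log r)^2)$ is weaker than the paper's $2k^4r^2\log r$, but the lemma only asks for \emph{some} sequence $n_{k,r}$, so this is fine).

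The genuine gap is the uniformity in $n$, and your proposed fix does not work. Realising all the binomials $N_{n,r}(\bm j')$ on a common sequence of i.i.d.\ uniforms, say $N_{n,r}(\bm j')=\#\{i\le n-(\ell-1)r:U_i<p_n\}$ with $p_n\approx C(\bm j')/n$, gives counts that are \emph{not} monotone in $n$: along a geometric subsequence $n_j=2^j$ the contributions from the index ranges $(n_{j-1},n_j]$ are independent across $j$, each exceeds the threshold $4krA$ with a fixed positive probability, and the second Borel--Cantelli lemma then forces $\sup_{n\ge n_{k,r}}N_{n,r}(\bm j')=\infty$ almost surely. Under your coupling the good event therefore has probability zero, and no inflation of $n_{k,r}$ repairs this. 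The missing idea is to \emph{choose} the joint law across $n$ adversarially in your favour: since the lemma only constrains the marginal for each $n$, one can dominate every $N_{n,r}(\bm i')$ (for all $n$ past a random threshold) by a \emph{single} $n$-independent Poisson random variable whose parameter is built from the $\Delta(\bm j)$'s, $\bm j\in P(\bm i')$. This is what the paper does: an induction on $k$ produces an almost-surely finite random variable $M_{k,r}(\varepsilon)$ such that $n\Delta\widetilde W_{n,r}(\bm i)\le\Delta(\bm i)+\varepsilon$ holds simultaneously for all $n\ge M_{k,r}(\varepsilon)$ and all $\bm i\in[r]^{\underline k}$, and $n_{k,r}$ is then taken to be a $(1-k^{-2})$-quantile of $M_{k,r}(k^{-2}r^{-1})$. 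This domination step is a new probabilistic idea relative to your fixed-$n$ argument, not mere bookkeeping.
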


\begin{proof}
    We will use induction on $k$ to prove that for any $k,r\in\mathbb N$ and $\varepsilon>0$, there is a random variable $M_{k,r}(\varepsilon)$ with $\P(M_{k,r}(\varepsilon)<\infty)=1$ so that
    \begin{equation}\label{eq:coupling-IH}
    n\ge M_{k,r}(\varepsilon)\Rightarrow \forall \bm i\in[r]^{\underline k}:\ n\Delta \widetilde{W}_{n,r}(\bm{i})\le \Delta(\bm i)+\varepsilon,
    \end{equation}
    where $[r]^{\underline k}=\bigcup_{\ell=1}^k[r]^\ell$.
    For $k=1$, we have
    \[
    n\Delta\widetilde W_{n,r}(i_1)=\frac{\Delta(i_1)}{1-\frac{i_1-1}{n}}=\Delta(i_1)+\frac{i_1-1}{n-(i_1-1)}\Delta(i_1)<\Delta(i_1)+\frac{r}{n-r}\max_{i_1\in[r]}\Delta(i_1).
    \]
    We solve
    \[
    \frac{r}{n-r}\max_{i_1\in[r]}\Delta(i_1)\le \varepsilon\Rightarrow n\ge r+\frac r\varepsilon\max_{i_1\in[r]}\Delta(i_1),
    \]
    so that the induction hypothesis~\eqref{eq:coupling-IH} holds for $k=1$ and any $r\in\mathbb N$ with the random variable $M_{1,r}(\varepsilon)=r+\frac r\varepsilon\max_{i_1\in[r]}\Delta(i_1)$.
    
    For the induction step, we assume~\eqref{eq:coupling-IH} holds up to $k$ and prove that it also holds for $k+1$. Let us pick $\bm i\in[r]^{k+1}$ and denote its parent by $\bm i'\in [r]^k$. For $n\ge M_{k,r}(\varepsilon)$, the induction hypothesis tells us that
    \[
    n\Delta \widetilde{W}_{n,r}(\bm{j})\le \Delta(\bm j)+\varepsilon
    \]
    for all $\bm j \in P(\bm i')\subset[r]^{\underline k}$. Summing over $\bm j \in P(\bm i')$, we obtain 
    \[
    n\widetilde{W}_{n,r}(\bm{i}')\le |P(\bm i')|\varepsilon+\sum_{\bm j\in P(\bm i')}\Delta(\bm i),
    \]
    so that the binomial random variable $N_{n,r}(\bm i')$ defined in~\cref{eq:coupling-N} is stochastically dominated by the Poisson random variable
    \[
    Y_{\varepsilon}(\bm i')\sim\text{Poi}\left(|P(\bm i')|\varepsilon+\sum_{\bm j\in P(\bm i')}\Delta(\bm j)\right).
    \]
    Note that $Y_{\varepsilon}(\bm i')$ does not depend on $n,k,r$. This means that for any $n\ge M_{k,r}(\varepsilon)$, we can couple $N_{n,r}(\bm i')$ to $Y_{\varepsilon}(\bm i')$ in a way that ensures $N_{n,r}(\bm i')\le Y_{\varepsilon}(\bm i')$.
    This means that
    \begin{eqnarray*}
    n\Delta \widetilde W_{n,r}(\bm i)
    &=\frac{\Delta(\bm i)}{1-\frac{kr+i_\ell-1}n-\frac1nN_{n,r}(\bm i')}
    &\le \frac{\Delta(\bm i)}{1-\frac{(k+1)r}n-\frac1nY_{\varepsilon}(\bm i')}\\
    &=\Delta(\bm i)+\frac{(k+1)r+Y_{\varepsilon}(\bm i')}{n-(k+1)r-Y_{\varepsilon}(\bm i')}\Delta(\bm i)
    &\le\Delta(\bm i)+\varepsilon,
    \end{eqnarray*}
    for $n\ge ((k+1)r+Y_{\varepsilon}(\bm i'))(1+\Delta(\bm i)/\varepsilon)$. This leads to
    \[
    M_{k+1,r}(\varepsilon)=\max\left\{M_{k,r}(\varepsilon),\max_{\bm i\in[r]^{k+1}}\left\{((k+1)r+Y_{\varepsilon}(\bm i'))(1+\Delta(\bm i)/\varepsilon)\right\}\right\},
    \]
    which is the maximum of a finite number of random variables, so that $\P(M_{k+1,r}(\varepsilon)<\infty)=1$, which completes the induction proof. 
    
    The random variable $M_{k,r}(\varepsilon)$ can be rewritten as follows:
    \[
    M_{k,r}(\varepsilon)=\max_{\ell\in[k],\bm i\in[r]^\ell}\left\{(\ell r+Y_{\varepsilon}(\bm i'))\left(1+\frac{\Delta(\bm i)}\varepsilon\right)\right\}.
    \]
    To complete the proof, we consider the random variable $M_{k,r}(k^{-2}r^{-1})$ and pick $n_{k,r}$ such that
    \[
    \P(M_{k,r}(k^{-2}r^{-1})>n_{k,r})<\frac1{k^2}.
    \]
    If there is some $n\ge n_{k,r}$ such that
    \[
    \exists \bm i\in [r]^k:\ n\widetilde W(\bm i)>\frac1k+\sum_{\bm j\in P(\bm i)}\Delta(\bm j),
    \]
    then there must be some $\bm j\in P(\bm i)$ with $n\Delta\widetilde W_{n,r}(\bm j)>\Delta(\bm j)+k^{-2}r^{-1}$. But by~\eqref{eq:coupling-IH}, this
    implies $M_{k,r}(k^{-2}r^{-1})>n$.
    Hence,
    \[
    \P\left(\exists \bm i\in [r]^k,n\ge n_{k,r}:\ n\widetilde W(\bm i)>\frac1k+\sum_{\bm j\in P(\bm i)}\Delta(\bm j)\right)\le \P(M_{k,r}(k^{-2}r^{-1})>n_{k,r})<\frac1{k^2},
    \]
    as stated in the lemma.
\end{proof}
The following lemma derives an admissible sequence $n_{k,r}$, which we prove in~\cref{sec:trees_appendix}.
\begin{lemma}\label{lem:coupling-admissible}
    For $1\ll r\ll k$, there exists a coupling and a sequence $n_{k,r}\sim 2k^4r^2\log r$ that achieves~\eqref{eq:coupling-guarantee}.
\end{lemma}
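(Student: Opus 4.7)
Starting from the explicit expression
\[
M_{k,r}(\varepsilon) = \max_{\ell \in [k],\, \bm i \in [r]^\ell} \left(\ell r + Y_\varepsilon(\bm i')\right)\left(1 + \tfrac{\Delta(\bm i)}{\varepsilon}\right)
\]
derived at the end of the proof of~\cref{lem:poi-coupling}, the plan is to specialise to $\varepsilon = k^{-2} r^{-1}$, prove that $M_{k,r}(\varepsilon) \leq 2k^4 r^2 \log r\,(1+o(1))$ with probability at least $1 - 1/k^2$, and then take $n_{k,r}$ to be that deterministic upper bound. The coupling itself is already the one from~\cref{lem:poi-coupling}; only the quantile estimate is new. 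Since $|[r]^{\underline k}| \leq 2r^k$, every union bound we perform must survive a factor of $r^k$, which forces the deviation events we consider to have log-probability $\lesssim -k\log r$.

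For the factor $1 + \Delta(\bm i)/\varepsilon$, a union bound on the exponential tail gives $\max_{\bm i}\Delta(\bm i) \leq k\log r + 3\log k$ with probability at least $1 - 2/k^3$, and hence $1 + \Delta(\bm i)/\varepsilon \leq k^3 r \log r\,(1 + o(1))$ uniformly in $\bm i$ on this event. For the factor $\ell r + Y_\varepsilon(\bm i')$, we show that each summand is at most $kr\,(1+o(1))$ uniformly. The bound $\ell r \leq kr$ is immediate. To control $Y_\varepsilon(\bm i')$, its random mean $\mu(\bm i') = |P(\bm i')|\varepsilon + S(\bm i')$ is at most $S(\bm i') + 1/k$, where $S(\bm i') = \sum_{\bm j \in P(\bm i')}\Delta(\bm j)$ is a Gamma variable of shape $|P(\bm i')| \leq kr$. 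A standard Chernoff estimate of the form $\P(\text{Gamma}(m) > m+s) \leq \exp(-s + m\log(1+s/m))$ combined with a union bound yields $\max_{\bm i'}S(\bm i') \leq kr\,(1+o(1))$ with probability $1 - o(1/k^2)$; the key point is that $s \gtrsim k\log r$ suffices to beat the $r^k$ factor. Conditional on this event, $\mu(\bm i') \leq kr\,(1+o(1))$ uniformly, so Poisson concentration, e.g.\ $\P(\text{Poi}(\mu) > \mu + t) \leq \exp\!\left(-t^2/(2(\mu+t/3))\right)$, together with a second union bound give $\max_{\bm i'}Y_\varepsilon(\bm i') \leq kr\,(1+o(1))$, since the Gaussian-regime fluctuation $O(k\sqrt{r\log r})$ is $o(kr)$ once $r \gg \log r$.

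Combining the two uniform bounds yields $M_{k,r}(\varepsilon) \leq 2kr\,(1+o(1))\cdot k^3 r\log r\,(1+o(1)) = 2k^4 r^2\log r\,(1+o(1))$ on an event of probability at least $1 - 1/k^2$, which is the claim. The main technical obstacle is the dependence between $Y_\varepsilon(\bm i')$ and its random mean $S(\bm i')$: this is handled by first conditioning on the high-probability event that all $S(\bm i')$ are simultaneously within $o(kr)$ of their means, and then applying Poisson concentration pointwise on that event before the second union bound, exploiting that, given the $\Delta$-values, the $Y_\varepsilon(\bm i')$ are conditionally independent.
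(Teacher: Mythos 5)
Your proposal is correct and reaches the same final asymptotics $n_{k,r}\sim 2k^4r^2\log r$, but it handles the term $\max Y_\varepsilon(\bm i')$ by a genuinely different mechanism than the paper. The paper exploits a distributional identity to \emph{unconditionalise}: since a Poisson whose random mean is a sum of $m$ independent $\text{Exp}(1)$'s is geometric (and a sum of such is negative binomial), it decomposes $Y_\varepsilon(\bm i') = Y(\bm i') + B(\bm i')$ with $Y(\bm i')\sim\text{Poi}(|P(\bm i')|\varepsilon)$ and $B(\bm i')\sim\text{NB}(|P(\bm i')|,\tfrac12)$, both with deterministic parameters; it then bounds $\max Y$ by a Poisson Chernoff bound and $\max B$ by passing to the equivalent binomial tail and applying Hoeffding. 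You instead condition on the $\Delta$'s, control the random means via a Gamma tail bound on $S(\bm i')$, then apply Poisson concentration pointwise and union-bound. Both yield the dominant estimate $\max_{\bm i'}Y_\varepsilon(\bm i')\le kr(1+o(1))$, and your handling of the dependence between $Y_\varepsilon(\bm i')$ and its random mean is sound. The paper's decomposition is slicker (it removes the conditioning altogether); your two-step argument is more elementary and perhaps easier to adapt if one changed the increment distribution away from exponential.

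One imprecision worth fixing: for the Gamma step, $s\gtrsim k\log r$ does \emph{not} suffice to beat the union bound. With shape $m=kr$ and $s\ll m$ the Cram\'er exponent is $s - m\log(1+s/m)\approx s^2/(2m)$, and requiring this to exceed $k\log r$ forces $s\gtrsim k\sqrt{r\log r}$ --- the same scale as the Poisson fluctuation you correctly state later. Since $k\sqrt{r\log r}=o(kr)$ whenever $r\to\infty$, the conclusion $\max_{\bm i'} S(\bm i')\le kr(1+o(1))$ still holds, so this is a statement error rather than a gap.
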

In particular, \cref{lem:coupling-admissible} tells us that $r_k=\lceil\frac{3}{2}\log k\rceil$ and $k\ll \frac{\log n}{\log\log n}$ is sufficient to ensure $n\gg n_{k,r}$. 

\subsection{Tree functionals}
In this section, we introduce the functional $Z_k=Z_{n,k,r_k}$, which is a function of the tree weights $(\widetilde{W}_{n,r_k}(\bm i))_{\bm i\in[r_k]^k}$ and its inverted counterpart $Z_k'$, which is a function of the tree weights from the other endpoint of the hypercube.
\begin{corollary}\label{lem:exp-limit1}
    Define
    \[
    Z_{n,k,r}=\sum_{\bm{i}\in [r]^k}e^{-n\widetilde{W}(\bm{i})},\quad\text{and}\quad Z_{\infty,k,r}=\sum_{\bm{i}\in [r]^k}\exp\left(-\sum_{\bm{j}\in P(\bm{i})}\Delta(\bm{j})\right).
    \]
    Then, there is some sequence $n_k\ge k$ as $k\to\infty$ so that
    \[
    \frac{Z_{n_k,k,r_k}}{Z_{\infty,k,r_k}}\stackrel{\text{a.s.}}\to1,
    \]
    as $k\to\infty$, where $r_k=\lceil\frac{3}{2}\log k\rceil$. Moreover, $\E[Z_{n_k,k,r_k}]\to1$.
\end{corollary}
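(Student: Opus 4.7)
The plan is to split the statement into the almost-sure convergence of the ratio and the convergence of expectations, treating them separately. For the first part, I would set $n_k = \lceil n_{k,r_k}\rceil$, where $n_{k,r_k}\sim 2k^4 r_k^2\log r_k$ is the sequence provided by~\cref{lem:coupling-admissible}, so that $n_k \ge k$ for all large $k$. Let $G_k$ denote the event in~\eqref{eq:coupling-guarantee} at $n = n_k$, which by~\cref{lem:poi-coupling} satisfies $\mathbb{P}(G_k^c) < 1/k^2$. Since this is summable, Borel--Cantelli ensures that $G_k$ holds for all but finitely many $k$ almost surely. On $G_k$, combining the upper bound from~\cref{lem:poi-coupling} with the deterministic lower bound $n\widetilde{W}_{n,r}(\bm{i}) \ge \sum_{\bm{j}\in P(\bm{i})}\Delta(\bm{j})$ implicit in~\eqref{eq:coupling} yields, for every $\bm{i}\in[r_k]^k$,
\[
e^{-1/k}\exp\Bigl(-\sum_{\bm{j}\in P(\bm{i})}\Delta(\bm{j})\Bigr) \;\le\; e^{-n_k\widetilde{W}(\bm{i})} \;\le\; \exp\Bigl(-\sum_{\bm{j}\in P(\bm{i})}\Delta(\bm{j})\Bigr).
\]
Summing over $\bm{i}$ and dividing by the almost surely positive quantity $Z_{\infty,k,r_k}$ sandwiches the ratio $Z_{n_k,k,r_k}/Z_{\infty,k,r_k}$ between $e^{-1/k}$ and $1$, both of which tend to $1$.

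For the expectation I would sandwich $\E[Z_{n_k,k,r_k}]$ between matching upper and lower bounds, each tending to $1$. For the upper bound, I would use $e^{-n\widetilde{W}(\bm{i})} = (1-W(\bm{i}))^n \le (1-W(\bm{i}))^{n-k}$ and interpret the sum: conditional on the tree weights, $(1-W(\bm{i}))^{n-k}$ is exactly the conditional expected number of accessible direct continuations of $\pi(\bm{i};r)$ to $[n]$, since each of the $(n-k)!$ such continuations contributes $(1-W(\bm{i}))^{n-k}/(n-k)!$. Summing over $\bm{i}$ and taking expectations gives the expected number of accessible direct paths $\emptyset\to[n]$ whose first $k$ edges lie in the bottom tree, which is at most $\E[X_n] = 1$. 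For the lower bound, on $G_k$ one has $Z_{n_k,k,r_k} \ge e^{-1/k}Z_{\infty,k,r_k}$, so
\[
\E[Z_{n_k,k,r_k}] \;\ge\; e^{-1/k}\Bigl(\E[Z_{\infty,k,r_k}] - \E[Z_{\infty,k,r_k}\mathbf{1}_{G_k^c}]\Bigr).
\]
A direct first-moment computation gives $\E[Z_{\infty,k,r_k}] = (1-2^{-r_k})^k$; since $r_k=\lceil(3/2)\log k\rceil$ makes $k\cdot 2^{-r_k}\to 0$, this tends to $1$. The error term is controlled by Cauchy--Schwarz, giving $\E[Z_{\infty,k,r_k}\mathbf{1}_{G_k^c}] \le \sqrt{\E[Z_{\infty,k,r_k}^2]}/k$.

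The main obstacle is showing the uniform bound $\E[Z_{\infty,k,r}^2] = O(1)$. Using independence of the $\Delta(\bm{j})$ together with $\E[e^{-\Delta}] = 1/2$ and $\E[e^{-2\Delta}] = 1/3$, one obtains
\[
\E[Z_{\infty,k,r}^2] \;=\; \sum_{\bm{i},\bm{i}'\in[r]^k} 2^{-|P(\bm{i})|-|P(\bm{i}')|}\,(4/3)^{|P(\bm{i})\cap P(\bm{i}')|}.
\]
I would decompose this double sum according to the length $t$ of the common prefix of $\bm{i}$ and $\bm{i}'$, using that $|P(\bm{i})\cap P(\bm{i}')| = \sum_{\ell\le t}i_\ell + \min(i_{t+1},i'_{t+1})$ for $t<k$ and $|P(\bm{i})\cap P(\bm{i}')| = |P(\bm{i})|$ for $t = k$. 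Summing over the common prefix contributes a factor of at most $\bigl((1-3^{-r})/2\bigr)^t \le 2^{-t}$; the sum over the diverging pair $i_{t+1}\ne i'_{t+1}$ is uniformly bounded via $\sum_{i\ne i'}2^{-i-i'}(4/3)^{\min(i,i')} \le 1$; and the sums over the remaining free levels each contribute a factor of $(1-2^{-r})\le 1$. Telescoping these estimates yields $\E[Z_{\infty,k,r}^2] \le \sum_{t\ge 0}2^{-t} = 2$, whence the lower bound on $\E[Z_{n_k,k,r_k}]$ converges to $1$ and the proof is complete.
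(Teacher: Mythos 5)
Your almost-sure argument is essentially identical to the paper's: pick $n_k = n_{k,r_k}$ from~\cref{lem:poi-coupling} and~\cref{lem:coupling-admissible}, apply Borel--Cantelli to the events from~\eqref{eq:coupling-guarantee} (probability of failure at most $1/k^2$, summable), and sandwich the ratio in $[e^{-1/k},1]$ on the good event. That is exactly what the paper does.

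For $\E[Z_{n_k,k,r_k}]\to 1$, however, your argument is genuinely more careful than the paper's. The paper states it uses ``the dominated convergence theorem with bound $Z_{n_k,k,r_k}\le Z_{\infty,k,r_k}$'' and then computes $\E[Z_{\infty,k,r_k}]=(1-2^{-r_k})^k\to 1$. That handles the $\limsup$ cleanly, but since the dominating random variable changes with $k$ and $Z_{\infty,k,r_k}$ is not independent of the bad event $G_k^c$ (both are driven by the $\Delta(\bm{j})$'s), the $\liminf$ is not a direct consequence of ordinary dominated convergence. You address this by restricting to $G_k$, invoking Cauchy--Schwarz to bound $\E[Z_{\infty,k,r_k}\mathbf{1}_{G_k^c}]\le\sqrt{\E[Z_{\infty,k,r_k}^2]}/k$, and proving the uniform second-moment bound $\E[Z_{\infty,k,r}^2]\le 2$ by decomposing the double sum over $[r]^k\times[r]^k$ according to the common prefix length. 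I checked that decomposition (the prefix contributes $\le 2^{-t}$, the diverging level $\le 1$, and the free levels $\le 1$) and it is correct. Your upper bound argument --- replacing $(1-W(\bm{i}))^n$ by $(1-W(\bm{i}))^{n-k}$ and recognizing the sum as a conditional expected path count dominated by $\E[X_n]=1$ --- is a nice alternative to the paper's exact computation of $\E[Z_{\infty,k,r_k}]$, though the latter is also available to you and is what you use for the lower bound. In short: correct, same plan for the a.s.\ statement, and a more explicit and arguably tighter justification for the moment convergence than the paper gives.
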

\begin{proof}
    We pick $n_k=n_{k,r_k}$, for $n_{k,r}$ as given by~\cref{lem:poi-coupling}.
    This means that with probability at least $1-\frac1{k^2}$,
    \[
    0\le n\widetilde W_{n_k,r_k}(\bm i)-\sum_{\bm j\in P(\bm i)}\Delta(\bm j)\le \frac1k,
    \]
    holds for all $\bm i\in [r_k]^k.$
    By monototonicity of $Z_{n,k,r}$, we get
    \[
    e^{-\frac1k}Z_{\infty,k,r_k}\le Z_{n_k,k,r_k}\le Z_{\infty,k,r_k}.
    \]
    The remainder of the convergence proof follows from the Borel-Cantelli lemma.
    
    For the expectation, we use the dominated convergence theorem with bound $Z_{n_k,k,r_k}\le Z_{\infty,k,r_k}$ and compute
    \[
    \E[Z_{\infty,k,r_k}]=\sum_{i \in [r_k]}2^{-i}\E[Z_{\infty,k-1,r_k}],
    \]
    by independence of the $\Delta(\bm i)$ and $\E[e^{-\Delta(\bm i)}]=\frac12$. Solving this recursion leads to
    \[
    \E[Z_{\infty,k,r_k}]=\left(1-2^{-r}\right)^k=1-\bigO(k2^{-r_k}).
    \]
    Our choice $r_k=\lceil \frac32\log k\rceil$ ensures $k2^{-r_k}\le k^{1-\frac32\log2}\to0$ since $1-\frac32\log2\approx -0.0397<0$. 
\end{proof}

\begin{lemma}\label{lem:exp-limit2}
    There exists a random variable $Z\sim\textup{Exp}(1)$, so that
    \(
    Z_{\infty,k,r_k}\stackrel{\P}\to Z,
    \)
    as $k\to\infty$ and ${r_k=\lceil\frac32\log k\rceil}.$
\end{lemma}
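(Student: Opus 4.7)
The plan is to couple $Z_{\infty,k,r_k}$ to the ``full-branching'' analogue
\[
Y_k=\sum_{\bm i\in\mathbb N^k}\exp\left(-\sum_{\bm j\in P(\bm i)}\Delta(\bm j)\right),
\]
which is a functional of the same i.i.d.\ $\text{Exp}(1)$ variables $(\Delta(\bm j))_{\bm j\in\mathbb N^\infty}$, dominates $Z_{\infty,k,r_k}$ pointwise, and will be shown to be an $L^2$-bounded nonnegative martingale whose almost sure limit is $\text{Exp}(1)$-distributed. Splitting the outer sum according to the first coordinate of $\bm i$ and observing that the partial sums $S_{i_1}=\sum_{j=1}^{i_1}\Delta((j))$ are exactly the arrival times of a rate-1 Poisson process, I obtain the self-similar identity
\[
Y_k=\sum_{i_1=1}^\infty e^{-S_{i_1}}Y_{k-1}^{(i_1)},
\]
where the $Y_{k-1}^{(i_1)}$ are i.i.d.\ copies of $Y_{k-1}$ built from disjoint portions of the input and hence independent of $(S_{i_1})$. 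Combined with $\E\left[\sum_{i_1\ge1}e^{-S_{i_1}}\right]=\sum_{i_1\ge1}2^{-i_1}=1$, this gives that $(Y_k)$ is a nonnegative martingale with respect to $\mathcal F_k=\sigma(\Delta(\bm j):|\bm j|\le k)$ with $\E[Y_k]=1$.

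Squaring the recursion and using the Poisson-process identities $\sum_{i\ge 1}\E[e^{-2S_i}]=\tfrac12$ and $\sum_{i\neq j}\E[e^{-S_i-S_j}]=1$, together with independence across branches, yields $\E[Y_k^2]=\tfrac12\E[Y_{k-1}^2]+1$, hence $\E[Y_k^2]\le 2$ for every $k$. The martingale convergence theorem combined with $L^2$-boundedness then yields $Y_k\to Y_\infty$ almost surely and in $L^2$, with $\E[Y_\infty]=1$.

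To identify the law of $Y_\infty$, I pass to the $L^2$ limit in the recursion; the independence of branches and the mean-zero property of $Y_{k-1}^{(i)}-Y_\infty^{(i)}$ give
\[
\left\|Y_k-\sum_{i=1}^\infty e^{-S_i}Y_\infty^{(i)}\right\|_{L^2}^2=\tfrac12\|Y_{k-1}-Y_\infty\|_{L^2}^2\to 0,
\]
where the $Y_\infty^{(i)}$ are i.i.d.\ copies of $Y_\infty$ independent of $(S_i)$. Combined with $Y_k\to Y_\infty$ in $L^2$, this yields the almost sure identity $Y_\infty=\sum_{i=1}^\infty e^{-S_i}Y_\infty^{(i)}$. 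Taking Laplace transforms $\phi(\lambda)=\E[e^{-\lambda Y_\infty}]$, applying Campbell's formula for the rate-1 Poisson process, and substituting $u=\lambda e^{-t}$ gives
\[
\phi(\lambda)=\exp\left(-\int_0^\lambda\frac{1-\phi(u)}{u}\,du\right).
\]
Differentiation yields the separable ODE $\lambda\phi'(\lambda)=\phi(\lambda)^2-\phi(\lambda)$, whose general Laplace-transform solution is $\phi(\lambda)=1/(1+\mu\lambda)$ with $\mu=-\phi'(0)$. The constraint $-\phi'(0)=\E[Y_\infty]=1$ singles out $\phi(\lambda)=1/(1+\lambda)$, the Laplace transform of $\text{Exp}(1)$.

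Finally, since $0\le Y_k-Z_{\infty,k,r_k}$ and
\[
\E\left[Y_k-Z_{\infty,k,r_k}\right]=1-(1-2^{-r_k})^k\le k\cdot 2^{-r_k}=k^{1-\tfrac32\log 2}=o(1),
\]
Markov's inequality gives $Y_k-Z_{\infty,k,r_k}\stackrel{\P}{\to}0$. Setting $Z:=Y_\infty$ and combining with $Y_k\to Y_\infty$ almost surely yields $Z_{\infty,k,r_k}\stackrel{\P}{\to}Z\sim\text{Exp}(1)$. The principal technical obstacle is the rigorous $L^2$ passage to the distributional fixed-point equation in the third step, which requires a careful accounting of the independence structure between the outer Poisson arrivals $(S_i)$ and the inner branching processes $Y_\infty^{(i)}$; once this is handled, the ODE analysis picks out $\text{Exp}(1)$ cleanly via the moment constraint $\E[Y_\infty]=1$.
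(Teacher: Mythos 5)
Your argument is correct and reaches the same conclusion via the same martingale backbone, but it differs from the paper's route in a few substantive and interesting ways. The paper first establishes the inner limit $r\to\infty$ via Borel--Cantelli, then applies Doob's convergence theorem for nonnegative martingales to obtain an a.s.\ limit $Z$, and identifies the law of $Z$ by peeling off only the $i_1=1$ branch, giving the fixed point $Z\stackrel{D}{=}U(Z_1'+Z_2')$ with $U=e^{-\Delta(1)}\sim\text{Unif}[0,1]$, then solving the resulting ODE for the MGF. You instead work directly with the full-branching $Y_k=Z_{\infty,k,\infty}$, compute the explicit $L^2$ bound $\E[Y_k^2]=\tfrac12\E[Y_{k-1}^2]+1\le 2$ (a computation the paper never needs), and use $L^2$ martingale convergence to pass the \emph{full} Poisson-arrival recursion $Y_k=\sum_i e^{-S_i}Y_{k-1}^{(i)}$ to the limit, obtaining the almost-sure identity $Y_\infty=\sum_i e^{-S_i}Y_\infty^{(i)}$. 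You then identify the law via Campbell's formula and the Laplace transform rather than the paper's MGF. Your $L^2$ machinery buys a more careful passage to the distributional fixed point than the paper's informal ``taking the limit $k\to\infty$ on both sides,'' at the cost of a slightly longer second-moment computation, and both ODEs coincide.

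One small point you should tighten: the sentence ``Combined with $\E[\sum_{i_1\ge 1}e^{-S_{i_1}}]=1$, this gives that $(Y_k)$ is a nonnegative martingale with respect to $\mathcal F_k$'' is not quite a proof of the martingale property. The horizontal (first-coordinate) recursion $Y_k=\sum_{i_1}e^{-S_{i_1}}Y_{k-1}^{(i_1)}$ shows $\E[Y_k]=1$, but the filtration $\mathcal F_k=\sigma(\Delta(\bm j):|\bm j|\le k)$ grows by \emph{depth}, so the martingale property $\E[Y_{k+1}\mid\mathcal F_k]=Y_k$ should be verified by integrating out the deepest level: $\E\bigl[\sum_{a\ge1}\exp(-\sum_{b\le a}\Delta(\bm i,b))\bigr]=\sum_a 2^{-a}=1$ for each $\bm i\in\mathbb N^k$. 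This is exactly the one-line computation the paper gives. It is a true statement and trivial to fix, but as written your argument does not supply it, and it is needed both for the a.s.\ convergence $Y_k\to Y_\infty$ and for the $L^2$ convergence you rely on.
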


\begin{proof}
For $i \in [r]$, define
\[
Z_{k-1,r}^{(i)}=\sum_{(i_2,\dots,i_k)\in[r]^{k-1}}\exp\left(-\sum_{\ell=2}^k\sum_{j=1}^{i_{\ell}}\Delta(i,i_2,\dots,i_{\ell-1},j)\right).
\]
Then note that $Z_{k-1,r}^{(1)},\dots, Z_{k-1,r}^{(r)}$ are independent copies of $Z_{\infty,k-1,r}$, since they are functions of disjoint sets of $\Delta(\bm i)$'s.
Moreover,
\[
Z_{\infty,k,r}= \sum_{i=1}^r e^{-\sum_{j=1}^i\Delta(j)}Z_{k-1,r}^{(i)}.
\]
This tells us that
\[
\E[Z_{\infty,k,r_k}]=(1-2^{-r_k})\E[Z_{\infty,k-1,r_k}]=(1-2^{-r_k})^k\to1.
\]

We now show that the limiting random variable
\(
Z=\lim_{k\to\infty}\lim_{r\to\infty}Z_{\infty,k,r}
\)
exists. 
To show that the inner limit exists, note that $[r]^k\subset [r+1]^k$, so that $Z_{\infty,k,r+1}-Z_{\infty,k,r}>0$. Moreover, the Markov inequality allows us to bound
\begin{eqnarray*}
\P\left(Z_{\infty,k,r+1}-Z_{\infty,k,r}>r^{-2}\right) &\le& r^2\E[Z_{\infty,k,r+m}-Z_{\infty,k,r}] \\
&=& r^2\left((1-2^{-r-1})^k-(1-2^{-r})^k\right)=\bigO(r^2k2^{-r}).
\end{eqnarray*}
This yields
\[
\sum_{r=1}^\infty\P\left(Z_{\infty,k,r+1}-Z_{\infty,k,r}>r^{-2}\right) = k\sum_{r=1}^\infty \bigO(r^22^{-r}) <\infty.
\]
We then use the Borel-Cantelli lemma to conclude that $Z_{\infty,k,r}$ converges almost surely as $r\to\infty$ to some random variable $Z_{\infty,k,\infty}$. 

To prove that $Z_{\infty,k,\infty}$ converges as $k\to\infty$, we show that it is a martingale:
\begin{align*}
    \E\left[Z_{\infty,k+1,\infty}\ \middle|\ \left(\Delta(\bm{i})\right)_{\bm{i}\in \bigcup_{1 \le\ell\le k}\mathbb{N}^\ell}\right]
    &=\sum_{\bm{i}\in \mathbb{N}^k}\exp\left(-\sum_{\bm{j}\in P(\bm{i})}\Delta(\bm{j})\right)\cdot\E\left[\sum_{a=1}^\infty\exp\left(-\sum_{1 \le b\le a}\Delta(\bm{i},b)\right)\right]\\
    &=\sum_{\bm{i}\in \mathbb{N}^k}\exp\left(-\sum_{\bm{j}\in P(\bm{i})}\Delta(\bm{j})\right)\cdot\sum_{a = 1}^{\infty}2^{-a}\\
    &=\sum_{\bm{i}\in \mathbb{N}^k}\exp\left(-\sum_{\bm{j}\in P(\bm{i})}\Delta(\bm{j})\right)\cdot1=Z_{\infty,k,\infty}.
\end{align*}
In the second line, we used that if $\Delta \sim \text{Exp}(1),$ then $\E[\exp(-\Delta)] = \frac{1}{2}$, and thus by independence $\E\left[\exp\left(-\sum_{b=1}^{a}\Delta(\bm{i}, b) \right)\right] = 2^{-a}$. Since $Z_{\infty,k,\infty}$ is a nonnegative martingale, Doob's martingale theorem ensures that it converges almost surely to some random variable $Z$.

Next, we show that $Z\sim\text{Exp}(1)$.
To do so, we first rewrite $Z_{\infty,k,\infty}$. Note that $1\in P(\bm{i})$ for all $\bm{i}\in\mathbb{N}^k$. Therefore,
\begin{align*}
Z_{\infty,k,\infty}
&=e^{-\Delta(1)}\sum_{\bm{i}\in\N^k}\exp\left(-\sum_{\bm{j}\in P(\bm{i})\setminus\{1\}}\Delta(\bm{j})\right)\\
&=e^{-\Delta(1)}\sum_{\bm{i}\in\N^k\ :\ i_1=1}\exp\left(-\sum_{\bm{j}\in P(\bm{i})\setminus\{1\}}\Delta(\bm{j})\right)+e^{-\Delta(1)}\sum_{\bm{i}\in\N^k\ :\ i_1>1}\exp\left(-\sum_{\bm{j}\in P(\bm{i})\setminus\{1\}}\Delta(\bm{j})\right)\\
&\stackrel{D}=e^{-\Delta(1)}Z_{\infty,k-1,\infty}+e^{-\Delta(1)}Z_{\infty,k,\infty}.
\end{align*}
Since $\Delta(1)\sim\text{Exp}(1)$, we have $e^{-\Delta(1)}\sim\text{Unif}([0,1])$. Taking the limit $k\to\infty$ on both sides tells us that
\[
Z\stackrel{D}=U\cdot(Z_{1}'+Z_{2}'),
\]
where $U\sim\text{Unif}([0,1])$ and $Z_{1}',Z_{2}'$ are independent copies of $Z$. We now derive the moment-generating function (MGF) of $Z$.
\begin{align*}
    M(t)=\E[e^{-tZ}]=\E\left[\E\left[e^{tU\cdot(Z_1'+Z_2')}\ \middle|\ Z_1',Z_2'\right]\right]=\E\left[\int_0^1e^{tu\cdot(Z_1'+Z_2')}du\right]=\E\left[\frac{e^{t\cdot(Z_1'+Z_2')}-1}{t\cdot(Z_1'+Z_2')}\right].
\end{align*}
Taking the derivative with respect to $t$ yields
\begin{align*}
    M'(t)
    =\E\left[\frac{(Z_1'+Z_2')\cdot e^{t\cdot(Z_1'+Z_2')}}{t\cdot(Z_1'+Z_2')}\right]-\E\left[\frac{e^{t\cdot(Z_1'+Z_2')}-1}{t^2\cdot(Z_1'+Z_2')}\right]=\frac{M(t)^2-M(t)}{t}.
\end{align*}
This is a separable differential equation:
$$
\frac{M'(t)}{M(t)^2-M(t)}=\frac1t.
$$
Integrating both sides yields
$$
\log(1-M(t))-\log M(t)=\log(t)+c\\
\Rightarrow \frac{1-M(t)}{M(t)}=te^c\\
\Rightarrow M(t)=\frac{1}{1+te^c}.
$$
The constant $e^c$ is determined by $1=\mathbb{E}[Z]=M'(0)=-e^{c}$, so that $e^c = -1$. We conclude that
\(
\E[e^{tZ}]=(1-t)^{-1},
\)
so that indeed $Z\sim\text{Exp}(1)$.

Finally, we prove $Z_{\infty,k,r_k}\stackrel\P\to Z$ for our choice of $r_k$.
Since $Z_{\infty,k,r}$ is increasing in $r$, we have $Z_{\infty,k,\infty}-Z_{\infty,k,r_k}>0$.
The Markov inequality allows us to bound
\[
\P(Z_{\infty,k,\infty}-Z_{\infty,k,r_k}>\varepsilon)\le\frac1\varepsilon\left(1-(1-2^{-r_k})^k\right)=\bigO(k2^{-r_k})\to0.
\]
This proves $Z_{\infty,k,\infty}-Z_{\infty,k,r_k}\stackrel\P\to0$, so that $Z_{\infty,k,r_k}\stackrel\P\to Z$.
\end{proof}

\begin{lemma}\label{lem:exp-limit}
    Consider $Z_k=Z_{n,k,r_k}$ and its inverted counterpart $Z_k'$ for $r_k=\lceil\frac32\log k\rceil$.
    There exists a coupling so that
    \[
        Z_k\stackrel{\P}\to Z,\quad\text{and}\quad Z_k'\stackrel{\P}\to Z',
    \]
    as $1\ll k\ll\frac{\log n}{\log\log n}\to\infty$,
    where $Z,Z'\sim\textup{Exp}(1)$ are independent exponential random variables.
\end{lemma}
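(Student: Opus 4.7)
The plan is to combine the coupling of \cref{lem:exp-coupling} with the admissible sequence from \cref{lem:coupling-admissible}, the sandwich already used in \cref{lem:exp-limit1}, and the limiting identification in \cref{lem:exp-limit2}, and then to exploit the disjointness of the bottom and top edge sets to get independence.

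First, I would put both sides on a single probability space by introducing two independent families of i.i.d.\ $\text{Exp}(1)$ random variables, $(\Delta(\bm i))_{\bm i\in\N^\infty}$ and $(\Delta'(\bm i))_{\bm i\in\N^\infty}$. The first is coupled to the edge weights in the first $k$ levels via \eqref{eq:coupling-N}--\eqref{eq:coupling}, producing $Z_k$; the second is coupled analogously to the last $k$ levels (where at each step the $r_k$ \emph{largest} available weights are selected and the roles of minima/maxima are interchanged), producing $Z_k'$. Because $k\le n/2$ for large $n$, these two constructions depend on disjoint sets of hypercube edges, so the driving families can be taken independent of each other.

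Second, I would fix $r_k=\lceil \tfrac{3}{2}\log k\rceil$ and invoke \cref{lem:coupling-admissible}, which yields an admissible sequence $n_{k,r_k}=\bigO(k^4(\log k)^3)$; under the hypothesis $k\ll \log n/\log\log n$ one has $n\gg n_{k,r_k}$, so \eqref{eq:coupling-guarantee} is available. Combining its conclusion with the trivial deterministic lower bound $n\widetilde W_{n,r_k}(\bm i)\ge \sum_{\bm j\in P(\bm i)}\Delta(\bm j)$ (immediate from \eqref{eq:coupling}) gives, exactly as in the proof of \cref{lem:exp-limit1},
\[
e^{-1/k}\,Z_{\infty,k,r_k}\;\le\; Z_k\;\le\; Z_{\infty,k,r_k}
\]
on an event of probability at least $1-1/k^2$, and the symmetric inequality for the primed quantities on an independent event of probability at least $1-1/k^2$.

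Third, \cref{lem:exp-limit2} supplies $Z_{\infty,k,r_k}\stackrel{\P}\to Z$ with $Z\sim\text{Exp}(1)$ measurable with respect to $\sigma((\Delta(\bm i))_{\bm i})$. Since $e^{-1/k}\to 1$ and $1/k^2\to 0$, the sandwich forces $|Z_k-Z_{\infty,k,r_k}|\stackrel{\P}\to 0$, and the triangle inequality then yields $Z_k\stackrel{\P}\to Z$. The identical argument on the primed side gives $Z_k'\stackrel{\P}\to Z'$ for some $Z'\sim\text{Exp}(1)$ measurable with respect to $\sigma((\Delta'(\bm i))_{\bm i})$, and independence of $Z$ from $Z'$ is automatic from independence of the two driving families. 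The only delicate point is ensuring that the coupling of \cref{lem:exp-coupling} is \emph{uniform} in $n$, so a single realisation of $(\Delta(\bm i))$ simultaneously controls $Z_{n,k,r_k}$ for every $n\ge n_{k,r_k}$; but this is exactly what the quantifier ``$\exists\,\bm i,\,n\ge n_{k,r_k}$'' in \eqref{eq:coupling-guarantee} provides, so beyond checking $n\gg n_{k,r_k}$ no additional estimate is required.
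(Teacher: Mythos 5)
Your proof is correct and follows essentially the same route as the paper's: it relies on the same chain of lemmas (the coupling of \cref{lem:poi-coupling}/\cref{lem:coupling-admissible}, the sandwich argument already isolated in \cref{lem:exp-limit1}, and the limit identification of \cref{lem:exp-limit2}), with independence coming from disjointness of the bottom and top edge sets. The only cosmetic differences are that you re-derive the sandwich rather than citing \cref{lem:exp-limit1} directly, and you phrase the conclusion via $|Z_k-Z_{\infty,k,r_k}|\stackrel{\P}\to 0$ rather than via the ratio $Z_k/Z\stackrel{\P}\to 1$ used in the paper.
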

\begin{proof}
    We take $Z_k=Z_{n_k,k,r_k}$ as defined in \cref{lem:exp-limit1}, which is a function of the weights of the first $k$ levels. For $Z_k'$, we take the inverted variant of $Z_{n,k,r_k}$. That is, we invert the hypercube by mapping every vertex $v\in V$ to its opposite $[n]\setminus v$, and replacing every edge weight $W(e)$ by $1-W(e)$. Let $Z'_{n,k,r_k}$ denote the function $Z_{n,k,r_k}$ applied to this inverted hypercube. Hence, $Z'_{n,k,r_k}$ is a function of the last $k$ levels of the original hypercube.

    We take the limit $Z$ from~\cref{lem:exp-limit2} and use~\cref{lem:exp-limit1} to write
    \[
    \frac{Z_k}{Z}=\frac{Z_{n_k,k,r_k}}{Z}=\frac{Z_{n_k,k,r_k}}{Z_{\infty,k,r_k}}\frac{Z_{\infty,k,r_k}}{Z}\stackrel{\P}\to1,
    \]
    so that indeed $Z_k\stackrel{\P}\to Z$.
    The proof for $Z_k'$ is identical.
\end{proof}
To prove~\cref{lem:approximation} and~\cref{lem:trees-main}, we will make use of the following lemma, which is proven in~\cref{sec:trees_appendix}. Recall that $\lambda_{k,n} = \E[X_{k,n}^{*}|(W(e))_{e \in E_{k}^{*}}]$.
\begin{lemma}\label{lem:trees}
Let $Z_k=Z_{n,k,r_k}$ as defined in \cref{lem:exp-limit1} and its inverted counterpart $Z_k'$. Then
    \[
    \E[|\lambda_{k,n} - Z_{k}Z_{k}'|] \to 0,
    \]
for $1\ll k\ll \frac{\log n}{\log\log n} \to\infty$.
\end{lemma}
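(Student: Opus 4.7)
The plan is to first derive an explicit formula for $\lambda_{k,n}$ as a sum over pairs of tree leaves, and then compare it term-by-term to $Z_k Z_k'$. Given the weights in $E_k^*$, both trees together with the leaf vertices $v(\bm i)\in V_k$ and $v'(\bm j)\in V_{n-k}$ and the terminal weights $W(\bm i)=W_{n,r_k}(\bm i)$, $W'(\bm j)=W'_{n,r_k}(\bm j)$ are all determined. A path $\pi\in\Pi_k^*$ is specified by a triple consisting of a bottom-tree leaf $\bm i$, a top-tree leaf $\bm j$, and a middle direct path from $v(\bm i)$ to $v'(\bm j)$. Such a middle path exists iff $v(\bm i)\subseteq v'(\bm j)$, in which case there are $(n-2k)!$ of them, and each one is accessible (conditionally on the tree weights) with probability $(W'(\bm j)-W(\bm i))_+^{n-2k}/(n-2k)!$ by independence of the middle edges from $E_k^*$. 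This yields
\[
\lambda_{k,n}=\sum_{\bm i,\bm j\in[r_k]^k}\mathbf{1}\{v(\bm i)\subseteq v'(\bm j)\}\cdot(W'(\bm j)-W(\bm i))_+^{n-2k},
\]
while $Z_kZ_k'=\sum_{\bm i,\bm j}(1-W(\bm i))^n(W'(\bm j))^n$ via $1-W(\bm i)=e^{-\widetilde W(\bm i)}$ and $W'(\bm j)=e^{-\widetilde W'(\bm j)}$.

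Next, I would apply the triangle inequality $|\lambda_{k,n}-Z_kZ_k'|\le E_1+E_2$, where $E_1=\sum(1-\mathbf{1}\{v\subseteq v'\})(1-W)^n(W')^n$ captures the indicator mismatch and $E_2=\sum\mathbf{1}\{v\subseteq v'\}\bigl|(W'-W)_+^{n-2k}-(1-W)^n(W')^n\bigr|$ the kernel mismatch. For $\E E_1$, the bottom and top trees depend on disjoint edge sets, so $(v(\bm i),W(\bm i))$ and $(v'(\bm j),W'(\bm j))$ are independent; moreover, the tree construction is symmetric under permutations of $[n]$, so the conditional distribution of $v(\bm i)$ given $W(\bm i)$ is uniform over size-$k$ subsets of $[n]$. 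This yields $\E[\mathbf{1}\{v(\bm i)\subseteq v'\}f(W(\bm i))]=(\binom{n-k}{k}/\binom{n}{k})\E[f(W(\bm i))]$ for any fixed $v'$ of size $n-k$ and any $f\ge 0$, whence summing over $\bm i,\bm j$ yields $\E E_1=O(k^2/n)\cdot\E[Z_k]\E[Z_k']\to 0$ using $\E[Z_k],\E[Z_k']\to 1$ from~\cref{lem:exp-limit1}.

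For $\E E_2$, I would set $a=W(\bm i)$ and $b=1-W'(\bm j)$, pick a threshold $T=Cr_kk$ for a suitable constant $C$, and split pairs based on whether $\max(na,nb)\le T$ (typical) or not (atypical). For atypical pairs, each summand is at most $1$ deterministically, and the coupling in~\cref{lem:poi-coupling} combined with a Chernoff tail bound on $S_{\bm i}:=\sum_{\bm l\in P(\bm i)}\Delta(\bm l)$ (a sum of $|P(\bm i)|\le r_kk$ i.i.d.\ $\mathrm{Exp}(1)$ random variables) would give $\P(na>T)=o(r_k^{-2k})$, rendering the atypical contribution $o(1)$. For typical pairs, a Taylor expansion of $\log$ gives
\[
\log\frac{(1-a-b)^{n-2k}}{(1-a)^n(1-b)^n}=2k(a+b)-(n-2k)ab+O\bigl(k(a+b)^2+n(a+b)^3\bigr),
\]
whose right side is $o(1)$ uniformly because $n(a+b)\le 2T=O(k\log k)$ and $k\log k/n\to 0$. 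Hence $|e^L-1|\le 2|L|$ applies, and combining this with the moment identities $\sum_{\bm i}\E[S_{\bm i}^\ell e^{-S_{\bm i}}]=O(k^\ell)$ for $\ell\in\{1,2,3\}$ (which follow from $\E[S^\ell e^{-S}]=O(|P|^\ell 2^{-|P|})$ for $S\sim\mathrm{Gamma}(|P|,1)$ together with the combinatorial estimate $\sum_{\bm i\in[r_k]^k}|P(\bm i)|^\ell 2^{-|P(\bm i)|}=O(k^\ell)$ obtained by expanding $|P(\bm i)|=i_1+\cdots+i_k$ and using $\sum_{i\ge 1}i^\ell 2^{-i}<\infty$) and the independence of $(S_{\bm i})_{\bm i}$ and $(S'_{\bm j})_{\bm j}$, the four error terms contribute $O(k^2/n)+O(k^3/n^2)=o(1)$ to $\E E_2$.

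The main obstacle is controlling $E_2$ uniformly across all pairs of tree leaves: the Taylor expansion of the log-ratio is only useful in the bulk regime where $na,nb$ are of typical order $O(k\log k)$, while atypical realizations can produce arbitrarily large $L$. This is handled by the exponential decay of the common factor $(1-a)^n(1-b)^n=e^{-na-nb}$, which suppresses atypical contributions rapidly enough to outweigh the at-most-$n^{o(1)}$ growth of the total number $r_k^{2k}$ of leaf pairs, provided one leverages the sharp moment bounds $\sum_{\bm i}|P(\bm i)|^\ell 2^{-|P(\bm i)|}=O(k^\ell)$ in concert with the sandwich from~\cref{lem:poi-coupling}.
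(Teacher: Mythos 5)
Your overall plan—reduce both $\lambda_{k,n}$ and $Z_kZ_k'$ to sums over pairs of tree leaves, control the indicator mismatch $E_1$ by the exchangeability argument, and control the kernel mismatch $E_2$ by a Taylor expansion of $\log\frac{(1-a-b)^{n-2k}}{(1-a)^n(1-b)^n}$ with a typical/atypical split—is genuinely different from the paper's. The paper decomposes the kernel error into two nonnegative pieces: $\varepsilon_2$, the difference between $(1-a)^n(1-b)^n$ and $(1-a-b)^n$, bounded by the pointwise estimate $nab(1-a)^{n-1}(1-b)^{n-1}\le 1/n$, and $\varepsilon_3$, the exponent shift from $n$ to $n-2k$, controlled by Jensen's inequality applied to the concave map $y\mapsto -y^{n/(n-2k)}$. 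Your one-shot logarithmic expansion merges these two and is conceptually cleaner, but requires the typical/atypical split; the paper avoids the split entirely by producing uniform pointwise bounds. The $E_1$ treatment is the same in both. Both lead to the required $o(1)$.

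There is, however, a genuine flaw in your atypical-pair estimate. You claim $\P(na>T)=o(r_k^{-2k})$ ``from the coupling in lem:poi-coupling combined with a Chernoff tail bound on $S_{\bm i}$,'' but the coupling $na\le\tfrac1k+S_{\bm i}$ holds only on an event of probability $1-k^{-2}$, and $k^{-2}$ is far larger than $r_k^{-2k}$ (which decays super-polynomially in $k$). So the failure of the coupling alone already destroys the bound $\P(na>T)=o(r_k^{-2k})$. The fix is simpler than what you attempted: on the atypical event, the kernels themselves are deterministically tiny—if $na>T$ then $(1-a)^n(1-b)^n\le e^{-T}$ and $(1-a-b)_+^{n-2k}\le(1-T/n)^{n-2k}\le e^{-T/2}$ for $n\ge 4k$—so the atypical sum is at most $r_k^{2k}\cdot 2e^{-T/2}=o(1)$ once $T\ge(4+\epsilon)k\log r_k$, with no probability estimate required. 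Similarly, your typical-pair moment bounds implicitly invoke the upper coupling $na\le\tfrac1k+S_{\bm i}$, which again fails with probability $k^{-2}$; either patch this with a Cauchy--Schwarz argument (both $\lambda_{k,n}$ and $Z_kZ_k'$ have bounded second moments), or bypass the upper coupling altogether by using the deterministic $a,b\le T/n$ together with the always-valid lower bound $(1-a)^n\le e^{-S_{\bm i}}$, which already gives $\sum_{\bm i,\bm j}|L|\,e^{-S_{\bm i}-S'_{\bm j}}=O(kT/n)\cdot O(1)=o(1)$.
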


\begin{proof}[Proof of~\cref{lem:approximation}]
    From $\E[Z_kZ_k']\to1$ (\cref{lem:exp-limit1}) and $\E[|\lambda_{k,n} - Z_{k}Z_{k}'|] \to 0$ (\cref{lem:trees}), it follows that $\E[\lambda_{k,n}]\to1$. This means that the nonnegative random variable $X_n-X_{k,n}^{*}$ has vanishing expectation, so that it follows from the Markov inequality that $X_n=X_{k,n}^{*}$ with high probability.
\end{proof}

\begin{proof}[Proof of~\cref{lem:trees-main}]
    The result follows directly by combining~\cref{lem:exp-limit} and~\cref{lem:trees}.
\end{proof}

\section{Middle part}\label{sec:middle-part}
To prove the coupling result from \cref{lem:middle}, we use the Chen-Stein method~\citep{arratia1990poisson} to prove that there exists a coupling between $Y_{k,n}$ and $X_{k,n}^{*}$ where $\P(Y_{k,n}\neq X_{k,n}^{*})$ is small. The Chen-Stein method is typically formulated in terms of the \emph{Total Variation} distance. However, for our purpose, it is not necessary to formally introduce the concept of total variation. 
We use the following `local' version of the Chen-Stein method, as presented in~\cite{ross2011fundamentals}:
\begin{theorem}[Theorem 4.7 of~\cite{ross2011fundamentals}]\label{thm:chen-stein}
    Let $(I_\theta)_{\theta\in\Theta}$ be a set of indicators, and let $\Gamma(\theta)\subset\Theta$ be the set of indicators that are dependent on $I_\theta$. Define $X=\sum_{\theta\in\Theta}I_\theta$, $\lambda=\mathbb{E}[X]$ and $Z\sim\text{Poi}(\lambda)$. There exists a coupling between $X$ and $Z$ so that
    \[
    \P(X\neq Z)\le\min\left\{1,\frac{1}{\lambda}\right\}\sum_{\theta\in\Theta}\sum_{\theta'\in \Gamma(\theta)}\left(\mathbb{E}[I_\theta]\mathbb{E}[I_{\theta'}]+\I{\theta\neq\theta'}\mathbb{E}[I_\theta I_{\theta'}]\right).
    \]
\end{theorem}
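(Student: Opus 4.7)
The plan is to apply the classical Chen--Stein method via the Poisson Stein equation. Recall the Stein characterization: $Z\sim\text{Poi}(\lambda)$ if and only if $\mathbb{E}[\lambda f(Z+1)-Zf(Z)]=0$ for every bounded $f:\N_0\to\R$. For each $A\subseteq\N_0$, let $f_A$ be the unique bounded solution of
\[
\lambda f_A(j+1)-jf_A(j)=\mathbf{1}_{\{j\in A\}}-\P(Z\in A),\qquad f_A(0)=0.
\]
The key analytic ingredient, which I would quote from~\cite{arratia1990poisson, ross2011fundamentals} rather than reprove, is the sharp uniform bound
\[
\|\Delta f_A\|_\infty:=\sup_{j\ge 0}|f_A(j+1)-f_A(j)|\le\min\{1,1/\lambda\}.
\]

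Next I would exploit the local dependence structure to bound $|\mathbb{E}[\lambda f(X+1)-Xf(X)]|$ for any $f$ satisfying $\|\Delta f\|_\infty\le\min\{1,1/\lambda\}$, evaluating via the telescoping
\[
\mathbb{E}[\lambda f(X+1)-Xf(X)]=\sum_{\theta\in\Theta}\Bigl(p_\theta\,\mathbb{E}[f(X+1)]-\mathbb{E}[I_\theta f(X)]\Bigr),
\]
where $p_\theta=\mathbb{E}[I_\theta]$. For each $\theta$, set $W_\theta=\sum_{\theta'\in\Gamma(\theta)}I_{\theta'}$ and $V_\theta=X-W_\theta$ (adopting the convention $\theta\in\Gamma(\theta)$), so that $V_\theta$ is independent of $I_\theta$. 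Using the independence identity $p_\theta\,\mathbb{E}[f(V_\theta+1)]=\mathbb{E}[I_\theta f(V_\theta+1)]$, I would rewrite each summand as
\[
p_\theta\,\mathbb{E}[f(X+1)-f(V_\theta+1)]-\mathbb{E}\bigl[I_\theta\bigl(f(V_\theta+W_\theta)-f(V_\theta+1)\bigr)\bigr].
\]
Applying $|f(a)-f(b)|\le\|\Delta f\|_\infty|a-b|$, noting $X-(V_\theta+1)\le W_\theta=\sum_{\theta'\in\Gamma(\theta)}I_{\theta'}$ in the first term, and on $\{I_\theta=1\}$ the identity $W_\theta-1=\sum_{\theta'\in\Gamma(\theta)\setminus\{\theta\}}I_{\theta'}$ in the second, and summing over $\theta$ collects precisely the two families of pair terms $p_\theta p_{\theta'}$ and $\mathbf{1}_{\{\theta\neq\theta'\}}\mathbb{E}[I_\theta I_{\theta'}]$ appearing in the asserted bound.

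Taking the supremum over $A$ identifies the left-hand side with the total variation distance $d_{TV}(X,Z)=\sup_A|\P(X\in A)-\P(Z\in A)|$, yielding the bound with $d_{TV}$ in place of $\P(X\neq Z)$. Finally, the standard maximal-coupling construction for two $\N_0$-valued distributions produces a joint realization of $X$ and $Z$ on a common probability space with $\P(X\neq Z)=d_{TV}(X,Z)$, completing the proof. The main obstacle is the sharp bound $\|\Delta f_A\|_\infty\le\min\{1,1/\lambda\}$: the trivial bound is $2\|f_A\|_\infty$, while obtaining the factor $\min\{1,1/\lambda\}$ requires a delicate analysis of the explicit representation of $f_A$ (as a ratio of partial Poisson sums) and is the reason this is a genuine theorem rather than a routine computation. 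All remaining steps are algebraic manipulations of the Stein identity using only the local independence structure and the $\|\Delta f\|_\infty$ bound.
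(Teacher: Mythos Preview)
The paper does not prove this statement at all: it is quoted verbatim as Theorem~4.7 of \cite{ross2011fundamentals} and used as a black box in the proof of \cref{lem:middle}. There is therefore no ``paper's own proof'' to compare against.

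That said, your outline is a correct sketch of the standard Chen--Stein argument and is essentially the proof given in \cite{ross2011fundamentals}. The decomposition $X=V_\theta+W_\theta$ with $V_\theta$ independent of $I_\theta$, the add-and-subtract of $p_\theta\,\mathbb{E}[f(V_\theta+1)]=\mathbb{E}[I_\theta f(V_\theta+1)]$, the telescoping via $\|\Delta f\|_\infty$, and the final passage from $d_{TV}$ to a coupling with $\P(X\neq Z)=d_{TV}(X,Z)$ are all correct and standard. You are also right that the only nontrivial analytic input is the sharp bound $\|\Delta f_A\|_\infty\le\min\{1,1/\lambda\}$, which you appropriately cite rather than reprove.
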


Let us define $\Pi^*_{k}$ as the set of paths $\pi$ from $\emptyset$ to $[n]$ so that the first $k$ steps of $\pi$ are contained in the $r_k$-ary tree rooted at $\emptyset$ and the last $k$ steps are contained in the tree rooted at $[n]$.
We consider
\[
X_{k,n}^{*}=\sum_{\pi\in \Pi^*_{k}}I_\pi\le X_n.
\]
To simplify the notation, we abbreviate
    \[
        \P_k(\cdot)=\P(\cdot|(W(e))_{e\in E_k^*}),\quad\text{and}\quad 
        \E_k[\cdot]=\E[\cdot|(W(e))_{e\in E_k^*}].
    \]

\begin{proof}[Proof of~\cref{lem:middle}]
    We consider the equivalent statement
    \[
    \E\left[\P_k(X_{k,n}^{*}\neq Y_{k,n})\right]\to0,
    \]
    which we prove by bounding $\P_k(X_{k,n}^{*}\neq Y_{k,n})$ using the Chen-Stein method.
To apply \cref{thm:chen-stein}, we need to construct a set of dependent events for each $\pi$. Two indicators $I_\pi,I_{\pi'}$ are dependent in $\P_k$ if they share an edge in the middle levels. For each $\pi\in\Pi^*_{k}$, we define $\Gamma(\pi)$ as the set of such overlapping paths.
    Applying \cref{thm:chen-stein} yields
    \begin{align}
    \P_k(X_{k,n}^{*}\neq Y_{k,n})
    &\le \min\left\{1,\frac1{\lambda_{k,n}}\right\}
    \sum_{\pi\in\Pi^*_k}\sum_{\pi'\in\Gamma(\pi)}\left(\E_k[I_\pi]\E_k[I_{\pi'}]+\I{\pi\neq\pi'}\E_k[I_\pi I_{\pi'}]\right)\nonumber\\
    &\le \sum_{\pi\in\Pi^*_k}\sum_{\pi'\in\Gamma(\pi)}\left(\frac1{\lambda_{k,n}}\E_k[I_\pi]\E_k[I_{\pi'}]+\I{\pi\neq\pi'}\E_k[I_\pi I_{\pi'}]\right).\label{eq:chen-stein}
    \end{align}
    We start by bounding the first part of the sum. We write
    \[
    \sum_{\pi\in\Pi^*_k}\sum_{\pi'\in\Gamma(\pi)}\frac1{\lambda_{k,n}}\E_k[I_\pi]\E_k[I_{\pi'}]\le\max_{\pi\in\Pi^*}\sum_{\pi'\in\Gamma(\pi)}\E_k[I_{\pi'}].
    \]
    Let $v_k(\pi)\in V_k$ denote the $k$-th vertex of $\pi$. Define
    \[
    V_k^*=\{v_k(\pi)\ :\ \pi\in\Pi_{k}^*\},
    \]
    and define $V_{n-k}^*$ similarly. For a path $\pi$, $\E_k[I_{\pi}]$ depends on the gap between the tree weights. This gap only depends on the endpoints $u=v_k(\pi)$ and $u'=v_{n-k}(\pi)$. Let us denote this weight gap by $w(u,u')$. Then
    \[
    \E_k[I_{\pi}]=\frac{w(u,u')^{n-2k}}{(n-2k)!}.
    \]
    For $u\in V_k,u'\in V_{n-k}$, we define
    \[
    \Gamma(\pi;u,u')=\{\pi'\in \Gamma(\pi)\ :\ v_k(\pi')=u,v_{n-k}(\pi')=u'\}.
    \]
    This allows us to write
    \[
    \E_k\left[\sum_{\pi'\in\Gamma(\pi)}I_{\pi'}\right]=\sum_{u\in V_k^*}\sum_{u'\in V_{n-k}^*}w(u,u')^{n-2k}\frac{|\Gamma(\pi;u,u')|}{(n-2k)!}.
    \]
    We now bound $|\Gamma(\pi;u,u')|/(n-2k)!$ uniformly over $u\in V_k^*,u'\in V_{n-k}^*$, and $\pi \in \Pi_{k}^{*}$. We do this by a union bound. Let $\Gamma_\ell(\pi;u,u')$ be the subset of $\Gamma(\pi;u,u')$ containing the paths that share an edge at level $k+\ell$ with $\pi$. By the union bound,
    \[
    |\Gamma(\pi;u,u')|\le\sum_{\ell=1,\dots,n-2k}|\Gamma_\ell(\pi;u,u')|.
    \]
    Then
    \begin{align*}
    	|\Gamma_\ell(\pi;u,u')|
    	&=(\ell-1)!\cdot (n-2k-\ell)!\I{w(u,u')>0,u\subset v_{k+\ell}(\pi) \subset u'}\\
    	&\le (\ell-1)!\cdot (n-2k-\ell)!\\
    	&=\frac{(n-2k-1)!}{{n-2k-1\choose \ell-1}}.
    \end{align*}
    This leads to
    \[
    \frac{|\Gamma(\pi;u,u')|}{(n-2k)!}\le\frac1{n-2k}\sum_{\ell=1}^{n-2k}{n-2k-1\choose \ell-1}^{-1}\sim \frac2n.
    \]
    Thus, 
    \begin{align*}
    \E_k\left[\sum_{\pi'\in\Gamma(\pi)}I_{\pi'}\right]
    &=\sum_{u\in V_k^*}\sum_{u'\in V_{n-k}^*}w(u,u')^{n-2k}\frac{|\Gamma(\pi;u,u')|}{(n-2k)!}\\
    &\le \frac{\E_k[X_{k,n}^{*}]}{n-2k}\sum_{\ell=0}^{n-2k-1}{n-2k-1\choose \ell}^{-1}\\
    &\sim\frac{2\lambda_{k,n}}{n}.
    \end{align*}
    Since this bound holds for all $\pi\in\Pi^*_k$ uniformly, we have obtained
    \[
        \max_{\pi\in\Pi^*}\sum_{\pi'\in \Gamma(\pi)}\E_k[I_{\pi'}]=\bigO\left(\frac{\lambda_{k,n}}{n}\right).
    \]
    We now bound the second part of the sum in~\eqref{eq:chen-stein}.
    We first take the expectation:
    \[
    \E\left[\sum_{\pi\in\Pi^*_k}\sum_{\pi'\in\Gamma(\pi)}\I{\pi\neq\pi'}\E_k[I_\pi I_{\pi'}]\right]=\sum_{\pi\in\Pi(\emptyset,[n])}\sum_{\pi'\in\Pi(\emptyset,[n])}\I{\pi\neq\pi'}\E[I_\pi I_{\pi'}\I{\pi\in\Pi^*_k,\pi'\in\Gamma(\pi)}].
    \] 
    The condition $\pi'\in\Gamma(\pi)$ implies that $\pi$ and $\pi'$ share an edge in $\bigcup_{\ell=k+1}^{n-k}E_\ell$.
    This implies that $\pi$ and $\pi'$ have $g \ge 2$ gaps or that they share more than $k$ edges. (See \cref{sec:second_moment}.) Therefore, we can bound by simply summing over all pairs
    \[
    (\pi,\pi')\in\mathcal P_k^*=\left(\bigcup_{\substack{s\ge 1 \\ g\ge2}}\mathcal P_{s,g}\right)\cup\left(\bigcup_{\substack{n-2 \geq s>k \\ g\ge1}}\mathcal P_{s,g}\right),
    \]
    where $\mathcal{P}_{s,g}$ is the set of ordered pairs of paths sharing exactly $s$ edges with $g$ gaps as defined in \eqref{eq:Psg}. We also recall the definition $c_{s,g} = \sum_{(\pi,\pi') \in \mathcal{P}_{s,g}}\E[I_{\pi}I_{\pi'}]$ in \eqref{eq:csg}. We have the bound
    \begin{align*}
        &\sum_{\pi\in \Pi(\emptyset,[n])}\sum_{\pi'\in\Pi(\emptyset,[n])}\I{\pi\neq\pi'}\E[I_\pi I_{\pi'}\I{\pi\in\Pi^*_k,\pi'\in\Gamma(\pi)}]\\
        \le&\sum_{(\pi,\pi')\in\mathcal P_k^*}\E[I_\pi I_{\pi'}]
        =\sum_{s=k+1}^\infty c_{s,1}+\sum_{g=2}^\infty\sum_{s=1}^\infty c_{s,g}
        =o(1),
    \end{align*}
    by~\cref{lem:pair_sums}.
    In conclusion, we can bound
    \begin{align*}
        \P(X_{k,n}^{*}\neq Y_{k,n})
        &\le \bigO\left(\frac{\E[\lambda_{k,n}]}{n}\right)+o(1)\to0, 
    \end{align*}
    which finishes the proof of the theorem. 
\end{proof}

\section{Acknowledgment}

This problem was proposed by G{\'a}bor Lugosi during the 17th Annual Workshop on Probability and Combinatorics, McGill University’s Bellairs Institute, Holetown, Barbados (April 7-14, 2023).
Martijn Gösgens was supported by the Netherlands Organisation for Scientific Research (NWO) through the Gravitation {\sc NETWORKS} grant no.\ 024.002.003.

\bibliographystyle{plain}
\bibliography{refs}

\begin{thebibliography}{10}

\bibitem{angel2020long}
Omer Angel, Asaf Ferber, Benny Sudakov, and Vincent Tassion.
\newblock Long monotone trails in random edge-labellings of random graphs.
\newblock {\em Combinatorics, Probability and Computing}, 29(1):22--30, 2020.

\bibitem{arratia1990poisson}
Richard Arratia, Larry Goldstein, and Louis Gordon.
\newblock Poisson approximation and the chen-stein method.
\newblock {\em Statistical Science}, pages 403--424, 1990.

\bibitem{becker2022giant}
Ruben Becker, Arnaud Casteigts, Pierluigi Crescenzi, Bojana Kodric, Malte Renken, Michael Raskin, and Viktor Zamaraev.
\newblock Giant components in random temporal graphs.
\newblock {\em arXiv preprint arXiv:2205.14888}, 2022.

\bibitem{berestycki2014accessibility}
Julien Berestycki, {\'E}ric Brunet, and Zhan Shi.
\newblock Accessibility percolation with backsteps.
\newblock {\em arXiv preprint arXiv:1401.6894}, 2014.

\bibitem{berestycki2016number}
Julien Berestycki, {\'E}ric Brunet, and Zhan Shi.
\newblock The number of accessible paths in the hypercube.
\newblock {\em Bernoulli}, pages 653--680, 2016.

\bibitem{Brandenberger_Donderwinkel_Kerriou_Lugosi_Mitchell_2025}
Anna Brandenberger, Serte Donderwinkel, Céline Kerriou, Gábor Lugosi, and Rivka Mitchell.
\newblock Temporal connectivity of random geometric graphs.
\newblock (arXiv:2502.15274), February 2025.
\newblock arXiv:2502.15274 [math].

\bibitem{broutin2023increasing}
Nicolas Broutin, Nina Kam{\v{c}}ev, and G{\'a}bor Lugosi.
\newblock Increasing paths in random temporal graphs.
\newblock {\em The Annals of Applied Probability}, 34(6):5498--5521, 2024.

\bibitem{calderbank1984increasing}
A~Robert Calderbank, Fan~RK Chung, and Dean~G Sturtevant.
\newblock Increasing sequences with nonzero block sums and increasing paths in edge-ordered graphs.
\newblock {\em Discrete mathematics}, 50:15--28, 1984.

\bibitem{casteigts2024sharp}
Arnaud Casteigts, Michael Raskin, Malte Renken, and Viktor Zamaraev.
\newblock Sharp thresholds in random simple temporal graphs.
\newblock {\em SIAM Journal on Computing}, 53(2):346--388, 2024.

\bibitem{de2015increasing}
Jessica De~Silva, Theodore Molla, Florian Pfender, Troy Retter, and Michael Tait.
\newblock Increasing paths in edge-ordered graphs: the hypercube and random graphs.
\newblock {\em The Electronic Journal of Combinatorics}, 23(2):P2.15, 2016.

\bibitem{gompertz1825mortality}
Benjamin Gompertz.
\newblock On the nature of the function expressive of the law of human mortality, and on a new mode of determining the value of life contingencies. in a letter to francis baily, esq. frs \&c.
\newblock {\em Philosophical transactions of the Royal Society of London}, (115):513--583, 1825.

\bibitem{graham1973increasing}
Ronald~L Graham and Daniel~J Kleitman.
\newblock Increasing paths in edge ordered graphs.
\newblock {\em Periodica Mathematica Hungarica}, 3(1-2):141--148, 1973.

\bibitem{hegarty2014existence}
Peter Hegarty and Anders Martinsson.
\newblock On the existence of accessible paths in various models of fitness landscapes.
\newblock {\em The Annals of Applied Probability}, pages 1375--1395, 2014.

\bibitem{kauffman1987towards}
Stuart Kauffman and Simon Levin.
\newblock Towards a general theory of adaptive walks on rugged landscapes.
\newblock {\em Journal of theoretical Biology}, 128(1):11--45, 1987.

\bibitem{kingman1978simple}
John~FC Kingman.
\newblock A simple model for the balance between selection and mutation.
\newblock {\em Journal of Applied Probability}, 15(1):1--12, 1978.

\bibitem{kuba2016moment}
Markus Kuba and Alois Panholzer.
\newblock On moment sequences and mixed poisson distributions.
\newblock 2016.

\bibitem{lavrov2016increasing}
Mikhail Lavrov and Po-Shen Loh.
\newblock Increasing hamiltonian paths in random edge orderings.
\newblock {\em Random Structures \& Algorithms}, 48(3):588--611, 2016.

\bibitem{li2018phase}
Li~Li.
\newblock Phase transition for accessibility percolation on hypercubes.
\newblock {\em Journal of Theoretical Probability}, 31:2072--2111, 2018.

\bibitem{martinsson2015accessibility}
Anders Martinsson.
\newblock Accessibility percolation and first-passage site percolation on the unoriented binary hypercube.
\newblock {\em arXiv preprint arXiv:1501.02206}, 2015.

\bibitem{michail2016introduction}
Othon Michail.
\newblock An introduction to temporal graphs: An algorithmic perspective.
\newblock {\em Internet Mathematics}, 12(4):239--280, 2016.

\bibitem{nair2022fundamentals}
Jayakrishnan Nair, Adam Wierman, and Bert Zwart.
\newblock {\em The fundamentals of heavy tails: Properties, emergence, and estimation}, volume~53.
\newblock Cambridge University Press, 2022.

\bibitem{reinert2007length}
Gesine Reinert and Michael~S Waterman.
\newblock On the length of the longest exact position match in a random sequence.
\newblock {\em IEEE/ACM transactions on computational biology and bioinformatics}, 4(1):153--156, 2007.

\bibitem{ross2011fundamentals}
Nathan Ross.
\newblock Fundamentals of {S}tein’s method.
\newblock 2011.

\end{thebibliography}

\appendix

\section{Proof of \cref{lem:pair_sums}}\label{sec:sm_appendix}
Here we prove \cref{lem:pair_sums}. We break up the proof into a series of smaller lemmas for readability. We begin by deriving an explicit expression for the term $c_{s,g}$. For $n \in \N_{0}$, let $w_{n}$ denote the number of paths in $\Pi(\emptyset, [n])$ which are edge disjoint from $\pi_{n}$. We take $w_{0} = 1$ by convention, $w_{1} = 0$, and in general we have $w_{n} \leq n!$. For $N, j, b \in \N_{0}$, define
$$
	\mathcal{C}(N,j;b) = \left\{(x_{1},x_{2},\dots,x_{j}) \in \N_{0}^{j}\,:\,\sum_{i=1}^{j}x_{i} = N,\,x_{i} \geq b\,\forall\,i\right\}.
$$
In other words, $\mathcal{C}(N,j;b)$ is the set of natural number compositions of $N$ with $j$ parts of at least $b$. We write $\mathcal{C}(N,j)$ for $\mathcal{C}(N,j;0)$. For future reference, we observe
$$
	|\mathcal{C}(N,j;b)| = \binom{N - (b-1)j -1}{j-1}
$$
(where the binomial coefficient on the right is $0$ whenever $N - (b-1)j -1 < j-1$). 

\begin{lemma} \label{lem:c_sg}
For $s \in \{0,1,\dots,n-2\}$ and $1 \leq g \leq \min\left\{s+1, \left\lfloor \frac{n-s}{2}\right\rfloor \right\}$,
	$$
		c_{s,g}=\binom{s+1}{g}\frac{n!}{(2n-s)!}\sum_{\bm{x} \in \mathcal{C}(n-s,g;2)}\prod_{i=1}^{g}w_{x_{i}}\binom{2x_{i}}{x_{i}}.
	$$ 
\end{lemma}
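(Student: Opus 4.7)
The plan is to exploit the vertex-transitivity of the hypercube to reduce to counting pairs involving the canonical path $\pi_n$, then classify such pairs by their gap vector.

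First, I would invoke the symmetry already noted in~\eqref{eq:sm_exp3}: since the joint distribution of edge weights is invariant under the automorphisms of the hypercube acting on $\Pi(\emptyset,[n])$, every path plays the role of ``first coordinate'' in the same number of pairs in $\mathcal{P}_{s,g}$, and $\E[I_\pi I_{\pi'}]$ is determined by the gap structure alone. Hence
\[
c_{s,g} = n!\sum_{\pi \in \mathcal{P}_{s,g}(\pi_n)}\E[I_{\pi_n}I_\pi],
\]
reducing the problem to enumerating paths $\pi$ whose gap vector relative to $\pi_n$ has $s+1$ entries (summing to $n-s$, with no entry equal to $1$) and exactly $g$ nonzero entries.

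Next, I would apply \cref{lem:int_probability} to replace $\E[I_{\pi_n}I_\pi]$ by $\prod_{i=0}^{s}\binom{2a_i}{a_i}/(2n-s)!$, noting that the zero entries of $\bm a$ contribute the factor $\binom{0}{0}=1$, so that the product collapses to $\prod_{i=1}^{g}\binom{2x_i}{x_i}$, where $x_1,\dots,x_g$ are the nonzero entries. This shows that $\E[I_{\pi_n}I_\pi]$ depends on $\pi$ only through the multiset of nonzero gap sizes, and the denominator $(2n-s)!$ can be pulled out of the sum.

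Then I would count paths in $\mathcal{P}_{s,g}(\pi_n)$ with a specified gap vector in two stages. The set of positions of the $g$ nonzero entries among the $s+1$ coordinates of $\bm a$ can be chosen in $\binom{s+1}{g}$ ways. The actual values of these entries, subject to being at least $2$ and summing to $n-s$, range over $\mathcal{C}(n-s,g;2)$. Finally, once $\bm a$ is fixed, the path $\pi$ is determined by its subpath $\bar\pi_i$ in each gap; zero gaps contribute a trivial factor of $1$, while a gap of size $x_i\ge 2$ can be filled by any direct path in an $x_i$-dimensional subcube which is edge-disjoint from the canonical path $\pi_{x_i}$, giving exactly $w_{x_i}$ choices by definition. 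Multiplying these factors together and combining with the probability from \cref{lem:int_probability} yields
\[
c_{s,g}=\binom{s+1}{g}\frac{n!}{(2n-s)!}\sum_{\bm{x}\in\mathcal{C}(n-s,g;2)}\prod_{i=1}^{g}w_{x_i}\binom{2x_i}{x_i}.
\]

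The proof is essentially bookkeeping, with no serious obstacle; the only point requiring care is verifying that the encoding $\pi\mapsto(\bm a,\bm G)$ described just before \cref{lem:int_probability} is indeed a bijection between $\mathcal{P}_{s,g}(\pi_n)$ and the pairs $(\bm a,\bm G)$ where $\bm a$ has exactly $g$ nonzero parts (each $\geq 2$) and $\bar\pi_i$ is a direct path edge-disjoint from $\pi_{a_i}$. This is implicit in the setup of Section~\ref{sec:second_moment}, so one can simply cite it.
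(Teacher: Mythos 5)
Your proposal matches the paper's proof essentially step for step: reduce to pairs with $\pi_n$ via~\eqref{eq:sm_exp3}, apply \cref{lem:int_probability} to read off the joint accessibility probability from the nonzero gap sizes, and then count paths by choosing the $g$ nonzero positions ($\binom{s+1}{g}$ ways), the composition $\bm{x}\in\mathcal{C}(n-s,g;2)$, and the $w_{x_i}$ edge-disjoint subpaths in each gap. The bookkeeping and the bijection $\pi\leftrightarrow(\bm{a},\bm{G})$ you flag as needing verification are exactly what the paper relies on, so there is nothing to add.
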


\begin{proof}
	We encode paths $\pi \in \mathcal{P}_{s,g}(\pi_{n})$ by the pairs $(\bm{a}, \bm{G})$ as outlined at the beginning of~\cref{sec:second_moment}. By definition, the vector $\bm{a}$ has length $s+1$ and exactly $g$ nonzero coordinates, all at least $2$, summing to $n-s$. To specify one uniquely we may thus choose a composition $\bm{x} = (x_{1}, x_{2},\dots,x_{g}) \in \mathcal{C}(n-s,g;2)$, then choose $g$ coordinates out of $s+1$ total in $a$ for $\bm{x}$ to occupy. If some $\pi \in \mathcal{P}_{s,g}(\pi_{n})$ has gap vector $\bm{a} = (a_{0},a_{1},\dots,a_{s})$ with nonzero coordinates $(x_{1},x_{2},\dots,x_{g}) \in \mathcal{C}(n-s,g;2)$, then by \cref{lem:int_probability} we have
	$$
		\E[I_{\pi_{n}}I_{\pi}] = \frac{1}{(2n-s)!}\prod_{i=0}^{s}\binom{2a_{i}}{a_{i}} = \frac{1}{(2n-s)!}\prod_{i=1}^{g}\binom{2x_{i}}{x_{i}}
	$$
	since $\binom{2a_{i}}{a_{i}} = 1$ if $a_{i} = 0$. Moreover, given $\bm{a}$, there are exactly 
	$$
		\prod_{i=0}^{s}w_{a_{i}} = \prod_{i=1}^{g}w_{x_{i}}
	$$
	choices for $\bm{G}$. It follows that
	$$
		\sum_{\pi \in \mathcal{P}_{s,g}(\pi_{n})}\E[I_{\pi_{n}}I_{\pi}] = \binom{s+1}{g}\frac{1}{(2n-s)!}\prod_{i=1}^{g}w_{x_{i}}\binom{2x_{i}}{x_{i}},
	$$
	and the proof of the lemma is finished.
\end{proof}

We also establish a convenient upper bound on $c_{s,g}$ which we will use repeatedly.

\begin{lemma}\label{lem:djk}
	For any $0 \leq s \leq n-2$ and $1 \leq g \leq \min\left\{s+1, \left\lfloor \frac{n-s}{2} \right\rfloor \right\}$, we have
	$$
		c_{s,g} \leq \frac{\binom{2(n-s)}{n-s}}{\binom{2n-s}{n}}\binom{s+1}{g}\frac{2^{g-1}}{(g-1)!(n-s-g)^{g-1}}
	$$
\end{lemma}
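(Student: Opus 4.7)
My proof plan starts from the explicit formula in \cref{lem:c_sg} and rewrites
\[
c_{s,g}=\frac{1}{\binom{2n-s}{n}\,(n-s)!}\binom{s+1}{g}\sum_{\bm{x}\in\mathcal{C}(m,g;2)}\prod_{i=1}^{g} w_{x_i}\binom{2x_i}{x_i},
\]
where $m=n-s$, using the identity $\frac{n!}{(2n-s)!}=\frac{1}{(n-s)!\binom{2n-s}{n}}$. Comparing with the target, it suffices to prove
\[
\sum_{\bm{x}\in\mathcal{C}(m,g;2)}\prod_{i=1}^{g}w_{x_i}\binom{2x_i}{x_i}\le \binom{2m}{m}\cdot\frac{m!\,2^{g-1}}{(g-1)!\,(m-g)^{g-1}}.
\]

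I would attack the three factors of the summand in sequence. First, trivially $w_{x_i}\le x_i!$, since $w_{x_i}$ counts a subset of all direct paths. Second, the central binomial coefficients satisfy the super-multiplicative inequality $\prod_{i=1}^{g}\binom{2x_i}{x_i}\le \binom{2m}{m}$ whenever $\sum x_i=m$; this follows by induction from the case $g=2$, where $\binom{2(a+b)}{a+b}=\sum_{k}\binom{2a}{k}\binom{2b}{a+b-k}\ge \binom{2a}{a}\binom{2b}{b}$ is a single nonnegative term of the Vandermonde identity. Factoring $\binom{2m}{m}$ out of the sum reduces the problem to bounding $\sum_{\bm{x}\in\mathcal{C}(m,g;2)}\prod_i x_i!$.

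Third, a standard convexity / swap argument shows that $\prod_i x_i!$, subject to $\sum x_i=m$ and $x_i\ge 2$, is maximized at the extreme composition $(m-2(g-1),2,2,\dots,2)$, yielding $(m-2g+2)!\,2^{g-1}$. (If $x_i\ge x_j\ge 2$, replacing $(x_i,x_j)$ by $(x_i+1,x_j-1)$ multiplies the product by $\frac{x_i+1}{x_j}\ge 1$.) Combined with $|\mathcal{C}(m,g;2)|=\binom{m-g-1}{g-1}$, the sum is at most $\binom{m-g-1}{g-1}(m-2g+2)!\,2^{g-1}$. The claim thus reduces to the purely algebraic inequality
\[
\binom{m-g-1}{g-1}(m-2g+2)!\,(g-1)!\,(m-g)^{g-1}\le m!,
\]
which, after cancellation of $(g-1)!$ and expansion of the binomial, is equivalent to
\[
(m-2g+2)(m-2g+1)(m-g)^{g-1}\le \frac{m!}{(m-g-1)!}=m(m-1)\cdots(m-g).
\]
The right-hand side has $g+1$ factors and I split them as $m\cdot(m-g)\cdot[(m-1)(m-2)\cdots(m-g+1)]$. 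The middle bracket has $g-1$ factors each at least $m-g+1>m-g$, giving $(m-g)^{g-1}\le (m-1)(m-2)\cdots(m-g+1)$; separately $(m-2g+2)\le m$ and $(m-2g+1)\le m-g$ (using $g\ge 1$) cover the remaining two factors. The $g=1$ case is immediate since $\mathcal{C}(m,1;2)=\{(m)\}$ and both sides collapse to $(s+1)\binom{2m}{m}/\binom{2n-s}{n}$, so the bound is in fact tight.

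The main obstacle is the super-multiplicativity step for central binomials: one must recognise that this Vandermonde-type inequality is the right tool to avoid losing constants that would later be too expensive to recover. The remaining work is bookkeeping on factorials and elementary factor-by-factor comparisons, which is tedious but mechanical.
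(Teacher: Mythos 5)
Your proof is correct and follows essentially the same route as the paper: bound $w_{x_i}\le x_i!$, bound $\prod_i\binom{2x_i}{x_i}\le\binom{2(n-s)}{n-s}$, and bound the remaining sum over compositions by the maximizing extreme composition $(m-2(g-1),2,\dots,2)$ times the count $\binom{m-g-1}{g-1}$; the paper phrases the last two steps via the inverse multinomial coefficient $\binom{m}{\bm x}^{-1}=\prod_i x_i!/m!$, which is algebraically identical. Your Vandermonde justification for super-multiplicativity and the swap argument for the extremal composition are more explicit than the paper's, which simply asserts those facts.
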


\begin{proof}
    We begin by writing 
	\begin{eqnarray}\label{eq:sum_rewrite}
            \sum_{\bm{x} \in \mathcal{C}(n-s,g;2)}\prod_{i=1}^{g}w_{x_{i}}\binom{2x_{i}}{x_{i}} &\leq& \sum_{\bm{x} \in \mathcal{C}(n-s,g;2)}\prod_{i=1}^{g}(x_{i})!\binom{2x_{i}}{x_{i}}\nonumber\\
            &=&(n-s)!\sum_{\bm{x} \in \mathcal{C}(n-s,g;2)}\binom{n-s}{x_{1},x_{2},\dots,x_{g}}^{-1} \prod_{i=1}^{g}\binom{2x_{i}}{x_{i}}
	\end{eqnarray}
	where we use $w_{x} \leq x!$ for any $x$ in the first line. It is straightforward to show that 
	\begin{equation}\label{eq:binomial_max}
            \max_{\bm{x} \in \mathcal{C}(n-s,g;2)}\prod_{i=1}^{g}\binom{2x_{i}}{x_{i}} \leq \max_{\bm{x} \in \mathcal{C}(n-s,g)}\prod_{i=1}^{g}\binom{2x_{i}}{x_{i}} = \binom{2(n-s)}{n-s},
	\end{equation}
	i.e., the maximum of the product in the middle above occurs at the point $\bm{x} = (n-s,0,0,\dots,0) \in \mathcal{C}(n-s,g)$. Similarly, 
	$$
            \min_{\bm{x} \in \mathcal{C}(n-s,g;2)}\binom{n-s}{x_{1},x_{2},\dots,x_{g}} = \binom{n-s}{n-s-2(g-1),2,\dots,2} = \frac{(n-s)!}{2^{g-1}(n-s-2(g-1))!}.
	$$
	Since $|\mathcal{C}(n-s,g;2)| = \binom{n-s-g-1}{g-1}$, we see
	\begin{eqnarray}\label{eq:multinomial_bound}
            \sum_{\bm{x} \in \mathcal{C}(n-s,g;2)}\binom{n-s}{x_{1},x_{2},\dots,x_{g}}^{-1} &\leq& \binom{n-s-g-1}{g-1} \cdot \frac{2^{g-1}(n-s-2(g-1))!}{(n-s)!}\nonumber\\
            &\leq& \frac{2^{g-1}}{(g-1)!} \cdot \frac{(n-s-g-1)!}{(n-s)!}\cdot\frac{(n-s-2(g-1))!}{(n-s-2g)!} \nonumber\\
            &\leq& \frac{2^{g-1}}{(g-1)!}\cdot\frac{(n-s-2g+2)(n-s-2g+1)}{(n-s-g)^{g+1}}\nonumber\\
            &\leq& \frac{2^{g-1}}{(g-1)!}\frac{1}{(n-s-g)^{g-1}} \ .
	\end{eqnarray}
	For the final inequality we need that $g \geq 2$, which implies $n-s-2g+1, n-s-2g+2 \leq n-s-g$. Even so, the inequality \eqref{eq:multinomial_bound} clearly still holds (with equality) for $g =1$, since in this case there is only the single summand $\binom{n-s}{n-s}^{-1} = 1$. Substituting the bounds \eqref{eq:binomial_max} and \eqref{eq:multinomial_bound} into \eqref{eq:sum_rewrite} yields 
	$$
            \sum_{\bm{x} \in \mathcal{C}(n-s,g;2)}\prod_{i=1}^{g}w_{x_{i}}\binom{2x_{i}}{x_{i}} \leq (n-s)!\binom{2(n-s)}{n-s}\frac{2^{g-1}}{(g-1)!(n-s-g)^{g-1}}
	$$
	from which point the result easily follows using the expression for $c_{s,g}$ in~\cref{lem:c_sg}.
\end{proof}

Below we give a more precise expression for the exponential factor of $c_{s,g}$.

\begin{lemma}\label{lem:exp_term}
	For $s \geq 0$ satisfying $n-s \gg 1$, we have
	$$
		\frac{\binom{2(n-s)}{n-s}}{\binom{2n-s}{n}} = \bigO(1)e^{-nf(s/n)}
	$$
	where $f(x) =  \log\left(\frac{(2-x)^{2-x}}{(4(1-x))^{1-x}} \right)$ for $x \in [0,1]$. The function $f$ satisfies $f(0) = f(1) = 0$, $f$ is strictly increasing on $[0,2/3]$ and strictly decreasing on $[2/3,1]$, and $f(x) \geq \frac{x}{2}$ for $0 \leq x \leq \frac{1}{3}$. 
\end{lemma}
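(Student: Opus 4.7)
The plan is to apply Stirling's approximation $m! = \sqrt{2\pi m}(m/e)^m(1+\bigO(1/m))$ to each of the three factorials appearing in the ratio $\binom{2(n-s)}{n-s}/\binom{2n-s}{n}$. The factors $(m/e)^m$ combine (after the $e^{-m}$ contributions cancel) into $4^{n-s}\cdot n^n(n-s)^{n-s}/(2n-s)^{2n-s}$, while the polynomial prefactors combine into $\sqrt{2n/(2n-s)}$, which lies in $[1,\sqrt{2}]$ and is therefore $\bigO(1)$. The hypothesis $n-s \gg 1$ ensures that the Stirling error term $1+\bigO(1/(n-s))$ is $1+o(1)$. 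Writing $x = s/n$ and splitting $\log(n-s) = \log n + \log(1-x)$ and $\log(2n-s) = \log n + \log(2-x)$, the $\log n$ contributions cancel since $1 + (1-x) - (2-x) = 0$, leaving exactly $n[(1-x)\log 4 + (1-x)\log(1-x) - (2-x)\log(2-x)] = -nf(x)$, which yields the claimed asymptotic.

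For the claimed properties of $f$, I first evaluate at the endpoints: $f(0) = \log(2^2/4) = 0$ directly, and $f(1) = 0$ follows from $\lim_{y \to 0^+} y^y = 1$ so that both $(2-x)^{2-x}$ and $(4(1-x))^{1-x}$ tend to $1$ as $x \to 1$. Differentiating $f(x) = (2-x)\log(2-x) - (1-x)\log(4(1-x))$ and cancelling the two $-1$ terms from the product rule gives
\[
f'(x) = \log\frac{4(1-x)}{2-x},
\]
which is strictly decreasing, vanishes exactly at the solution of $4(1-x) = 2-x$ (namely $x = 2/3$), is positive on $[0, 2/3)$, and negative on $(2/3, 1]$. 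This establishes the required monotonicity.

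The inequality $f(x) \geq x/2$ on $[0, 1/3]$ I would obtain by concavity. A further differentiation gives
\[
f''(x) = -\frac{1}{1-x} + \frac{1}{2-x} = -\frac{1}{(1-x)(2-x)} < 0,
\]
so the function $h(x) := f(x) - x/2$ is strictly concave on $[0,1]$. Since $h(0) = 0$, writing any $x \in [0, 1/3]$ as the convex combination $x = (1-3x)\cdot 0 + 3x \cdot (1/3)$ and applying concavity yields $h(x) \geq 3x\,h(1/3)$, so it suffices to verify $h(1/3) \geq 0$. A short simplification gives $f(1/3) = \tfrac{5}{3}\log 5 - \log 12$, and the inequality $f(1/3) \geq 1/6$ is equivalent to $5^{10} \geq e \cdot 12^6$, which holds since $5^{10}/12^6 \approx 3.27 > e$.

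The main obstacle is really just the Stirling bookkeeping: verifying that the pre-exponential factor is genuinely $\bigO(1)$ rather than merely polynomial in $n$, which is where the hypothesis $n-s \gg 1$ does the essential work. Once this is in hand, the remaining claims about $f$ amount to a routine calculus exercise.
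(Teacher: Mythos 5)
Your proposal is correct and follows essentially the same route as the paper: Stirling's formula term by term, cancellation of the $\log n$ contributions to isolate $e^{-nf(s/n)}$, the same first and second derivative computations, and the same concavity argument (the paper applies concavity to $f$ directly to get $f(x)\ge \tfrac{f(1/3)}{1/3}x$, which is equivalent to your passage through $h=f-x/2$). The only substantive difference is that you spell out the final numeric check $f(1/3)\ge 1/6$ via $5^{10}/12^6 > e$, which the paper leaves implicit.
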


\begin{proof}
	We will obtain the asymptotic expression for the ratio with Stirling's formula ($s! = (1+o(1)) \sqrt{2\pi s} (s/e)^s$). We first simplify the ratio as
    $$
        \frac{\binom{2(n-s)}{n-s}}{\binom{2n-s}{n}} = \frac{(2(n-s))!n!}{(2n-s)!(n-s)!}.
    $$
    We apply Stirling's formula to each factorial on the right-hand side individually. The total sub-exponential factor is given by 
    $$
        \sqrt{\frac{4\pi(n-s) \cdot 2\pi n}{2\pi(2n-s) \cdot 2\pi(n-s)}} \leq \sqrt{2}
    $$
    where we used $2n-s \geq n$. The exponential factor is
    \begin{align*}
        \frac{\left(\frac{2(n-s)}{e} \right)^{2(n-s)}\left(\frac{n}{e}\right)^{n}}{\left(\frac{2n-s}{e} \right)^{2n-s}\left(\frac{n-s}{e} \right)^{n-s}} =& \left(\frac{(2(1-s/n))^{2(1-s/n)}}{(2-s/n)^{2-s/n}(1-s/n)^{1-s/n}}\right)^{n}\\
        =&\left(\frac{(4(1-s/n))^{1-s/n}}{(2-s/n)^{2-s/n}}\right)^{n}\\
        =&e^{-nf(s/n)}
    \end{align*}
    This establishes the asymptotic expression in the statement of the lemma. 
    
    We can evaluate $f(0) = 0$, and we use the convention that $f(1) = \ds\lim_{x \to 1}f(x) = 0$. For $x \in [0,1)$, we have
	$$
		f'(x) = \log(1-x)-\log(2-x) + \log 4 \quad\text{ and }\quad f''(x) = \frac{1}{2-x} - \frac{1}{1-x}.
	$$
	It is easily checked that $f'(2/3) = 0$. Since $f''(x) < 0$ for $x \in [0,1)$, $f$ is concave on $[0,1)$ and hence $f$ must be strictly increasing on $[0,2/3]$ and strictly decreasing on $[2/3,1]$. Moreover, by the concavity of $f$, for $x \in [0,1/3]$ we have that $f(x) \geq \frac{f(1/3)}{1/3}\cdot x \geq \frac{x}{2}$.
\end{proof}

\begin{lemma}\label{lem:small_j} We have
	$$
		\sum_{s=1}^{0.99n}\sum_{g=2}^{\min\left\{ s+1, \frac{n-s}{2}\right\}}c_{s,g} = o(1)
	$$
	and, for any $k = k(n) \to \infty$, 
	$$
		\sum_{s=k}^{0.99n}c_{s,1} = o(1).
	$$
\end{lemma}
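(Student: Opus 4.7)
The plan is to combine Lemmas~\ref{lem:djk} and~\ref{lem:exp_term} to write
\[
c_{s,g} \;\leq\; \bigO(1)\,e^{-n f(s/n)}\,B_{s,g}, \qquad B_{s,g} := \binom{s+1}{g}\,\frac{2^{g-1}}{(g-1)!\,(n-s-g)^{g-1}},
\]
then control the $g$-sum of $B_{s,g}$ for each fixed $s$ and finally split the $s$-sum into a ``near $0$'' regime $s \leq n/3$ and a ``bulk'' regime $n/3 \leq s \leq 0.99\, n$, exploiting the two complementary estimates for $f$ provided by Lemma~\ref{lem:exp_term}.

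For the first assertion, the key intermediate step is to show that $\sum_{g \geq 2} B_{s,g} = \bigO((s+1)^2/(n-s))$ uniformly for $1 \leq s \leq 0.99\, n$. I would use $\binom{s+1}{g} \leq (s+1)^g/g!$ and rearrange to
\[
B_{s,g} \;\leq\; \frac{s+1}{g\,((g-1)!)^2}\,R^{\,g-1}, \qquad R := \frac{2(s+1)}{n-s-g}.
\]
The constraints $s \leq 0.99\, n$ and $g \leq (n-s)/2$ give $n-s-g \geq (n-s)/2 \geq 0.005\,n$, so $R \leq 4(s+1)/(n-s)$ is bounded by an absolute constant. Reindexing $m = g-1$ and pulling one factor of $R$ out,
\[
\sum_{g \geq 2} B_{s,g} \;\leq\; (s+1)\,R\, \sum_{m \geq 1}\frac{R^{m-1}}{(m+1)(m!)^2} \;=\; \bigO\big((s+1)\,R\big) \;=\; \bigO\!\left(\frac{(s+1)^2}{n-s}\right),
\]
since the tail series is a convergent power series that is bounded whenever $R$ is bounded.

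To close the first assertion, I would split the $s$-sum. For $1 \leq s \leq n/3$, Lemma~\ref{lem:exp_term} gives $f(s/n) \geq s/(2n)$, hence $e^{-n f(s/n)} \leq e^{-s/2}$, and using $n-s \geq 2n/3$,
\[
\sum_{s=1}^{\lfloor n/3\rfloor} e^{-s/2}\,\frac{(s+1)^2}{n-s} \;\leq\; \frac{3}{2n}\sum_{s \geq 1}(s+1)^2 e^{-s/2} \;=\; \bigO(1/n).
\]
For $n/3 \leq s \leq 0.99\,n$, continuity and positivity of $f$ on the closed interval $[1/3, 0.99]$ give $f(s/n) \geq \delta$ for some constant $\delta > 0$, so $e^{-n f(s/n)} \leq e^{-\delta n}$; the remaining sum then has $\bigO(n)$ terms each of size $\bigO(n)$, contributing $\bigO(n^2 e^{-\delta n}) = o(1)$.

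For the second assertion, Lemma~\ref{lem:djk} at $g=1$ immediately gives $c_{s,1} \leq \bigO(1)\,e^{-n f(s/n)}\,(s+1)$, and the same two-region split applies. In the small-$s$ regime, $\sum_{s \geq k}(s+1) e^{-s/2}$ is the tail of a convergent series and tends to $0$ as $k \to \infty$; in the bulk regime, $\bigO(n^2 e^{-\delta n}) = o(1)$ again suffices. The main obstacle is the $g$-sum bound used in the first assertion: the combinatorial factor $B_{s,g}$ does not decay geometrically in $g$ (its decay is of $1/((g-1)!)^2$ type), and pulling out exactly one copy of $R$ before summing is what converts the naive bound $\bigO(s+1)$ into the sharper $\bigO((s+1)^2/(n-s))$ needed to tame the small-$s$ regime. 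Everything after that is standard asymptotic bookkeeping.
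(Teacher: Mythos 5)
Your proposal is correct and follows essentially the same route as the paper: bound $c_{s,g}$ via Lemmas~\ref{lem:djk} and~\ref{lem:exp_term}, show the inner $g$-sum is $\bigO(s^2/n)$ by extracting one factor of the ratio and summing a convergent $1/((g-1)!)^2$-type series, then split the outer $s$-sum at $n/3$ and $0.99n$ using $f(x)\ge x/2$ on the first piece and a uniform lower bound on $f$ over $[1/3,0.99]$ on the second. The only cosmetic differences are that the paper bounds $2/(n-s-g)$ by $400/n$ rather than $4/(n-s)$, and evaluates $f(0.99)\approx 0.04$ explicitly rather than appealing to continuity and positivity.
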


\begin{proof}
	Note that for $s \leq 0.99n$ and $g \leq \min\left\{s+1, \frac{n-s}{2}\right\}$, we have
	$$
		s+g \leq s + \min\left\{s+1, \frac{n-s}{2} \right\} = \min\left\{2s+1, \frac{n+s}{2} \right\}\leq \frac{1.99n}{2},
	$$
	and so $n-s-g \geq 0.005n = \frac{n}{200}$. Using Lemmas \ref{lem:djk} and \ref{lem:exp_term}, we may write for any $1 \leq s \leq 0.99n$:
	\begin{align*}
            \sum_{g=2}^{\min\left\{s+1, \frac{n-s}{2}\right\}}c_{s,g} =& \bigO(e^{-nf(s/n)})\sum_{g=2}^{\min\left\{s+1, \frac{n-s}{2}\right\}}\binom{s+1}{g}\frac{1}{(g-1)!}\left(\frac{400}{n} \right)^{g-1}\\
            =&\bigO(se^{-nf(s/n)})\sum_{g=2}^{\min\left\{s+1, \frac{n-s}{2}\right\}}\binom{s}{g-1}\frac{1}{(g-1)!}\left(\frac{400}{n} \right)^{g-1}\\
            \leq& \bigO(se^{-nf(s/n)})\sum_{g=2}^{\infty}\frac{1}{(g-1)!^{2}}\left(\frac{400s}{n} \right)^{g-1}\\
            =& \bigO\left(\frac{s^{2}}{n}e^{-nf(s/n)} \right)\sum_{g=2}^{\infty}\frac{1}{(g-1)!^{2}}\left(\frac{400s}{n} \right)^{g-2}\\
            \leq & \bigO\left(\frac{s^{2}}{n}e^{-nf(s/n)} \right)\sum_{g=2}^{\infty}\frac{400^{g-2}}{(g-1)!^{2}}\\
            =&\bigO\left(\frac{s^{2}}{n}e^{-nf(s/n)} \right).
	\end{align*}
	Thus, to show the first statement of the Lemma, it suffices to show that the sum $\sum_{s=1}^{0.99n}\frac{s^{2}}{n}e^{-nf(s/n)}$ is $o(1)$. For $1 \leq s \leq \frac{n}{3}$, by \cref{lem:exp_term} we have that $f(s/n) \ge s/2n$ and so we may write
	$$
		\sum_{s=1}^{n/3}\frac{s^{2}}{n}e^{-nf(s/n)} \leq \sum_{s=1}^{n/3}\frac{s^{2}}{n}e^{-s/2} = \bigO(1/n) = o(1).
	$$
	For $\frac{n}{3} \leq s \leq 0.99n$, we have that $f(s/n) \geq \min\{f(1/3), f(0.99)\} = f(0.99) = 0.04223...\geq 0.04$, and hence 
	$$
		\sum_{s=n/3}^{0.99n}\frac{s^{2}}{n}e^{-nf(s/n)} = \bigO(n^{2}e^{-0.04n}) = o(1).
	$$
	This proves the first statement in the Lemma. For the second, again using Lemmas \ref{lem:djk} and \ref{lem:exp_term} we can write 
	$$
		c_{s,1} \leq \bigO\left(se^{-nf(s/n)} \right)
	$$
	for $1 \leq s \leq 0.99n.$ Note that, similar to above, we have 
    $$
        \sum_{s=n/3}^{0.99n}se^{-nf(s/n)} \leq \bigO(n^{2}e^{-0.04n}) = o(1).
    $$
    So the second statement holds for any $k = k(n)$ in $[n/3, 0,99n].$ Suppose $\omega(1) = k \leq n/3$. Then, we have $\sum_{s=k}^{n/3}se^{-nf(s/n)} \leq \sum_{s=k}^{n/3}se^{-s/2}$. Note that the ratio of successive terms in the sum is $(s+1)e^{-(s+1)/2}/se^{-s/2} = \frac{s+1}{s}e^{-1/2} = (1+o(1))e^{-1/2} < 0.61$, since $s \ge k \gg 1$. Thus,
    $$
        \sum_{s=k}^{\infty}se^{-s/2} \leq ke^{-k/2}\sum_{\ell = 0}^{\infty}(0.61)^{\ell} = \bigO \left( ke^{-k/2} \right) = o(1),
    $$
    which completes the proof of the second statement.
\end{proof}

\begin{lemma}\label{lem:large_j}
    We have
	$$
		\sum_{s=0.99n}^{n-2}\sum_{g=1}^{\min\left\{s+1,\frac{n-s}{2}\right\}}c_{s,g} 
        = o(1).
	$$
\end{lemma}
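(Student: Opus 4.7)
The plan is to apply the upper bound from \cref{lem:djk} together with elementary estimates in the substituted variable $m = n-s \in [2, 0.01n]$, and to show that for each fixed $g$, the inner sum over $m$ is geometric and dominated by its boundary term at $m = 2g$. Summing these boundary terms over $g$ will then give a geometric series in $1/n$.

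First, combining the elementary estimates $\binom{n-m+1}{g} \leq n^g/g!$ and $n!/(n+m)! \leq 1/n^m$ with the composition-sum bound $\sum_{\bm{x} \in \mathcal{C}(m, g; 2)}\prod_i w_{x_i}\binom{2x_i}{x_i} \leq m! \binom{2m}{m} \cdot 2^{g-1}/((g-1)!(m-g)^{g-1})$ derived in the proof of \cref{lem:djk}, I obtain
\[
c_{n-m, g} \leq \tilde A_{m, g} := \frac{m!\binom{2m}{m}2^{g-1}}{g!(g-1)!(m-g)^{g-1}n^{m-g}}.
\]

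Next, a direct calculation using $\binom{2m+2}{m+1}/\binom{2m}{m} = 2(2m+1)/(m+1)$ gives
\[
\frac{\tilde A_{m+1, g}}{\tilde A_{m, g}} = \frac{2(2m+1)}{n}\left(\frac{m-g}{m+1-g}\right)^{g-1} \leq \frac{4(m+1)}{n},
\]
which is uniformly bounded by $1/2$ for $m \leq 0.01n$ and $n$ sufficiently large. The geometric estimate therefore yields $\sum_{m=2g}^{0.01n}\tilde A_{m, g} \leq 2\tilde A_{2g, g}$.

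Finally, using the identity $(2g)!/(g!(g-1)!) = g\binom{2g}{g}$ along with the crude bounds $\binom{2g}{g} \leq 4^g$ and $\binom{4g}{2g} \leq 16^g$, the boundary term simplifies to
\[
\tilde A_{2g, g} \leq \frac{(128/n)^g}{2 g^{g-2}},
\]
which is at most $(128/n)^g/2$ for all $g \geq 2$ and equals $64/n$ when $g = 1$. Summing over $g$ yields a convergent geometric series with $\sum_{g \geq 1}\tilde A_{2g, g} = O(1/n)$, and the lemma follows since $\sum_{s = 0.99n}^{n-2}\sum_g c_{s,g} \leq 2\sum_{g \geq 1}\tilde A_{2g, g} = o(1)$.

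The main delicate point is arranging both the $m$- and $g$-summations to be geometric simultaneously. The cutoff $s \geq 0.99n$ (equivalently $m \leq 0.01n$) is precisely what keeps $4(m+1)/n$ safely below $1$, while the $n^g$ factor in $\tilde A_{2g, g}$ overpowers the exponential growth coming from the central binomial coefficients $\binom{2m}{m}, \binom{2g}{g}, \binom{4g}{2g}$. The constant $128 = 4 \cdot 16 \cdot 2$ is crude but suffices for the $o(1)$ conclusion; tighter Stirling estimates would improve the constant without changing the qualitative result.
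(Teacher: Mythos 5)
Your proof is correct. The key steps all check out: the substitution $m = n-s$, the bound
\[
c_{n-m,g}\le \tilde A_{m,g}=\frac{m!\binom{2m}{m}\,2^{g-1}}{g!\,(g-1)!\,(m-g)^{g-1}n^{m-g}}
\]
(obtained by combining Lemma~\ref{lem:c_sg}, the intermediate composition-sum bound from the proof of Lemma~\ref{lem:djk}, and $\binom{n-m+1}{g}\le n^g/g!$, $n!/(n+m)!\le n^{-m}$), the ratio computation
\[
\frac{\tilde A_{m+1,g}}{\tilde A_{m,g}}=\frac{2(2m+1)}{n}\left(\frac{m-g}{m+1-g}\right)^{g-1}\le\frac{4(m+1)}{n}\le\tfrac12
\]
for $m\le 0.01n$, and the estimate $\tilde A_{2g,g}\le (128/n)^g/(2g^{g-2})$. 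Interchanging the order of summation and then summing the two geometric series gives $O(1/n)$.

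This is a genuinely different organization from the paper's. The paper first bounds the $g$-sum uniformly in $s$ by replacing $\binom{s+1}{g}$ with $\binom{n}{(n-s)/2}$ and pulling out the convergent series $\sum_{g\ge1}2^{g-1}/(g-1)!=e^2$, arriving at $\sum_g c_{s,g}\le e^2\left(16(n-s)/n\right)^{(n-s)/2}$, and then sums over $s$. You reverse the order: for fixed $g$, you exhibit the $s$-sum (in the variable $m=n-s$) as a geometric series with ratio $\le 1/2$ dominated by its boundary term $\tilde A_{2g,g}$, and then sum over $g$. Both approaches are elementary and yield the same $O(1/n)$ rate, and both pinpoint the cutoff $s\ge 0.99n$ as what keeps the ratio below $1$. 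Your version makes the geometric decay in $m$ explicit for each $g$, which is perhaps more transparent; the paper's version is more compact because the $g$-sum collapses to a constant in one step. One tiny wording nit: $\tilde A_{2,1}$ is \emph{at most} $64/n$, not equal to it, since $\binom{2}{1}\le 4$ and $\binom{4}{2}\le 16$ are not equalities.
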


\begin{proof}
	Clearly we have $\min\left\{s+1, \frac{n-s}{2}\right\} = \frac{n-s}{2}$ for $0.99n \leq s \leq n-2$. We write
	$$
            \frac{\binom{2(n-s)}{n-s}}{\binom{2n-s}{n}} \leq \frac{4^{n-s}}{(n+n-s)_{(n-s)}/(n-s)!} \leq \left(\frac{4}{n} \right)^{n-s}(n-s)!,
	$$
        where $(n)_k=n\cdot(n-1)\cdots(n-k+1)\ge(n-k)^k$ denotes the falling factorial.
	For $s \leq n-2$ and $g \leq \frac{n-s}{2}$, we have $n-s-g \geq 1$. Thus by \cref{lem:djk} and the above inequality, 
	\begin{eqnarray*}
            \sum_{g=1}^{\frac{n-s}{2}}c_{s,g} &\leq&  \left(\frac{4}{n} \right)^{n-s}(n-s)!\sum_{g=1}^{\frac{n-s}{2}}\binom{s+1}{g}\frac{2^{g-1}}{(g-1)!}\\
            &\leq& \left(\frac{4}{n} \right)^{n-s}(n-s)!\binom{n}{\frac{n-s}{2}}\sum_{g=1}^{\infty}\frac{2^{g-1}}{(g-1)!}\\
            &\leq& e^{2} \left(\frac{4}{n} \right)^{n-s}(n-s)!\frac{n^{\frac{n-s}{2}}}{\left(\frac{n-s}{2} \right)!}\\
            &\leq& e^{2}\left(\frac{16(n-s)}{n} \right)^{\frac{n-s}{2}}.
	\end{eqnarray*}
	Then we have
	$$
		\sum_{s=0.99n}^{n-2}c_{s,g} \leq e^{2}\sum_{s=0.99n}^{n-2}\left(\frac{16(n-s)}{n} \right)^{\frac{n-s}{2}} = e^{2}\sum_{\ell = 2}^{0.01n}\left(\frac{16\ell}{n} \right)^{\ell/2} = o(1),
	$$
	and the proof of the lemma is finished.
\end{proof}
\begin{lemma}\label{lem:dominant_term}
	For any $k = k(n) \to \infty$, 
	$$
		\sum_{s=0}^{k}c_{s,1} = 4 + o(1).
	$$
\end{lemma}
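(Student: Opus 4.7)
The plan is to identify the pointwise and uniform-in-$s$ asymptotics of $c_{s,1}$, combine them with the tail bounds already established in Lemmas~\ref{lem:small_j} and~\ref{lem:large_j}, and thereby read off the constant $4$. Specializing Lemma~\ref{lem:c_sg} to $g=1$ and using $\mathcal{C}(n-s,1;2) = \{(n-s)\}$ gives
\[
c_{s,1} = (s+1)\,\frac{n!}{(2n-s)!}\,w_{n-s}\binom{2(n-s)}{n-s} = (s+1)\cdot \frac{w_{n-s}}{(n-s)!}\cdot \prod_{i=0}^{s-1}\frac{n-i}{2n-s-i},
\]
where the second form uses $\binom{2(n-s)}{n-s}=(2(n-s))!/((n-s)!)^2$.

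I would next analyze the two nontrivial factors. For the first, I claim $w_m/m!=1+O(1/m)$. This follows from a first-moment argument: the number of permutations of $[m]$ that share the $i$-th edge with the canonical path $\pi_m$ is $(i-1)!(m-i)!=(m-1)!/\binom{m-1}{i-1}$, so the expected number of shared edges between a uniformly chosen direct path and $\pi_m$ equals $(1/m)\sum_{j=0}^{m-1}\binom{m-1}{j}^{-1}$, which is $O(1/m)$, dominated by the two extreme terms $j=0,m-1$. A Markov bound then gives $w_m/m!\ge 1-O(1/m)$, which pairs with the trivial $w_m\le m!$. For the product, the algebraic identity $\frac{n-i}{2n-s-i}=\tfrac12\bigl(1+\tfrac{s-i}{2n-s-i}\bigr)$ yields $\prod_{i=0}^{s-1}\frac{n-i}{2n-s-i}=2^{-s}\prod_{i=0}^{s-1}\bigl(1+\tfrac{s-i}{2n-s-i}\bigr)$, and the residual product is $1+O(s^2/n)$ uniformly in $s=o(n)$. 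Together, these give the uniform asymptotic
\[
c_{s,1}=(s+1)\,2^{-s}(1+o(1)),
\]
valid uniformly for $s\le\lfloor n^{1/4}\rfloor$.

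To wrap up, I let $k'=\min(k,\lfloor n^{1/4}\rfloor)$, which still tends to infinity. Evaluating the generating function $\sum_{s\ge 0}(s+1)x^s=(1-x)^{-2}$ at $x=1/2$ gives $\sum_{s\ge 0}(s+1)2^{-s}=4$, so by the uniform asymptotic,
\[
\sum_{s=0}^{k'}c_{s,1}=(1+o(1))\sum_{s=0}^{k'}(s+1)2^{-s}=4+o(1),
\]
and the leftover $\sum_{s=k'+1}^{k}c_{s,1}\le\sum_{s=k'+1}^{n-2}c_{s,1}=o(1)$ by Lemmas~\ref{lem:small_j} and~\ref{lem:large_j} applied to the diverging sequence $k'+1$. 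Summing the two estimates completes the proof. The main technical step will be the asymptotic $w_m\sim m!$; the remaining arguments are routine asymptotic bookkeeping.
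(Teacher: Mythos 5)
Your proposal is correct and follows essentially the same route as the paper's proof: specialize Lemma~\ref{lem:c_sg} to $g=1$, show $w_m = (1+o(1))\,m!$ via a union bound, derive the uniform asymptotic $c_{s,1}\sim(s+1)2^{-s}$ over a range of small $s$, sum the geometric-type series to $4$, and dispatch the remaining terms using Lemma~\ref{lem:small_j}. The only minor variation is that you estimate $w_m$ by computing the expected number of shared edges with $\pi_m$ directly, whereas the paper bounds the probability of a vertex intersection on each level; both are elementary first-moment/union-bound arguments yielding $w_m/m! = 1 - O(1/m)$.
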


\begin{proof}	
    We remark that $c_{s,1}$ is given by
    \begin{equation}\label{eq:cs1}
        c_{s,1}=(s+1)\frac{n!}{(2n-s)!}\binom{2(n-s)}{n-s}w_{n-s}.
    \end{equation}
    by \cref{lem:c_sg}. It will suffice to prove the result for $k = o(\sqrt{n})$. Indeed, given this, for arbitrary $k \to \infty$ we may choose $h \leq k$ with $h = o(\sqrt{n})$; then 
    $$
        \sum_{s=0}^{k}c_{s,1} = \sum_{s=0}^{h}c_{s,1} + \sum_{s=h+1}^{k}c_{s,1} = 4 + o(1),
    $$
    where we use \cref{lem:small_j} to conclude that $\sum_{s=h+1}^{k}c_{s,1} = o(1).$
    
	Now, let $k = o(\sqrt{n})$. We claim that $w_{n-s} = (1+o(1))(n-s)!$, uniformly over $1 \leq s \leq k.$ To see this, first observe that $w_{n-s}$ is lower bounded by the number of paths of length $n-s$ from $\emptyset$ to $[n-s]$ which are \textit{vertex disjoint} from $\pi_{n-s}$ on the interior of the $n-s$-dimensional hypercube, i.e., vertex disjoint except at $\emptyset$ and $[n-s]$. Suppose we sample a path of length $n-s$ from $\emptyset$ to $[n-s]$ uniformly at random. The probability that the path goes through any particular vertex on level $h$ of the hypercube is $\binom{n-s}{h}^{-1}$. Hence, by a union bound, the probability that the path intersects with $\pi_{n-s}$ on the interior of a $n-s$-dimensional hypercube is at most $\sum_{h=1}^{n-1}\binom{n-s}{h}^{-1}= \bigO((n-s)^{-1}) = \bigO(n^{-1}).$ It follows that there are $(1-\bigO(n^{-1}))(n-s)!$ paths which are vertex disjoint from $\pi_{n-s}$ on the interior, so we have $(1+o(1))(n-s)! \leq w_{n-s}$. Trivially, $w_{n-s} \leq (n-s)!$, and this establishes the claim.

    We plug $w_{n-s} =(1+o(1))(n-s)!$ into \eqref{eq:cs1} to reach
    $$
        c_{s,1} = (1+o(1))(s+1)\frac{n!}{(n-s)!}\cdot\frac{(2(n-s))!}{(2n-s)!}
    $$
    for $1 \leq s \leq k$. Since $k = o(\sqrt{n})$, we have $\frac{n!}{(n-s)!} = (1+o(1))n^{s}$ and $\frac{(2(n-s))!}{(2n-s)!} = (1+o(1))\frac{1}{(2n)^{s}}.$ Thus in total we get $c_{s,1} = (1+o(1))\frac{s+1}{2^{s}}$ uniformly over $1 \leq s \leq k$, so that finally
	$$
		\sum_{s=0}^{k}c_{s,1} = (1+o(1))\sum_{s=0}^{k}\frac{s+1}{2^{s}} = 4 + o(1),
	$$
    and the proof of the lemma is finished.
\end{proof}

\section{Proofs for~\cref{sec:trees}}\label{sec:trees_appendix}

\begin{proof}[Proof of~\cref{lem:coupling-admissible}]
    We bound $M_{k,r}(\varepsilon)$ in order to derive an admissible sequence $n_{k,r}$.
    First we bound
    \begin{align*}
        M_{k,r}(\varepsilon)\le \max_{\ell\in[k],\bm i\in[r]^\ell}\left\{(\ell r+Y_{\varepsilon}(\bm i'))\right\}\cdot \left(1+\frac1\varepsilon\max_{\ell\in[k],\bm i\in [r]^\ell}\Delta(\bm i)\right).
    \end{align*}
    The parameter of the Poisson random variable $Y_{\varepsilon}(\bm i)$ is $|P(\bm i)|\varepsilon$ plus a sum of exponential random variables. Recall that a mixed Poisson distribution with exponential mixture follows the geometric distribution, while a sum of geometric distributions follows a negative binomial distribution. This allows us to decompose $Y_{\varepsilon}(\bm i)$ into $Y(\bm i)\sim\text{Poi}(|P(\bm i)|\varepsilon)$ and $B(\bm i)\sim \text{NB}(|P(\bm i)|,\frac12)$, where NB denotes the negative binomial distribution; the number of failed trials until the $|P(\bm i)|$-th success.
    This leads to
    \begin{align*}
        M_{k,r}(\varepsilon)\le \max_{\ell\in[k],\bm i\in[r]^\ell}\left\{(\ell r+Y(\bm i')+B(\bm i'))\right\}\cdot \left(1+\frac1\varepsilon\max_{\ell\in[k],\bm i\in [r]^\ell}\Delta(\bm i)\right).
    \end{align*}
    We will pick
    \[
    n_{k,r}=(kr+y_k+b_k)(1+\varepsilon^{-1}d_k),
    \]
    where $y_k,b_k,d_k$ are chosen in order to guarantee 
    \[
    \P\left(\max_{\ell\in[k],\bm i\in [r]^\ell}Y(\bm i')> y_k\right),\P\left(\max_{\ell\in[k],\bm i\in [r]^\ell}B(\bm i')> b_k\right),\P\left(\max_{\ell\in[k],\bm i\in [r]^\ell}\Delta(\bm i)> d_k\right)\le \frac1{3k^2},
    \]
    so that
    \(
    \P(M_{k,r}(\varepsilon)>n_{k,r})\le\frac1{k^2}.
    \)
    We will now derive the desired asymptotics of $y_k,b_k,d_k$.
    Firstly note that
    \[
    \left|\bigcup_{\ell\in[k]}[r]^\ell\right|=\frac{r^{k+1}-1}{r-1}\sim r^k.
    \]
    For $y_k$, we use the fact that $Y(\bm i')$ is stochastically dominated by $\text{Poi}(rk\varepsilon)$ and use the Chernoff inequality to bound
    \[
    \P(Y(\bm i')>y_k)\le \left(\frac{erk\varepsilon}{y_k}\right)^{y_k}e^{-rk\varepsilon}.
    \]
    Then, by the union bound, it suffices to have
    \[
    \frac{r^{k+1}-1}{r-1}\left(\frac{erk\varepsilon}{y_k}\right)^{y_k}e^{-rk\varepsilon}\le\frac1{3k^2}.
    \]
    We pick $y_k$ to satisfy this constraint with equality. We take the log to determine the asymptotics of $y_k$.
    \[
    k\log r+o(1)+y_k\left(1+\log r+\log k-\log y_k\right)-rk\varepsilon=-\log3-2\log k.
    \]
    The main asymptotics are determined by 
    \[
    k\log r\sim y_k\left(\log y_k-\log k\right)\Rightarrow y_k\sim k\frac{\log r}{\log\log r}.
    \]

    For $b_k$, we make use of the fact that each of the $B(\bm i')$ is stochastically dominated by $\text{NB}(rk,\frac12)$. In addition, the event that the $rk$-th failure occurs before the $a_k$-th trial is equivalent to the event that the first $a_k$ trials contain less than $rk$ successes. This allows us to use Hoeffding's inequality to bound
    \begin{align*}
    \P(B(\bm i')+rk>a_k)\le \P(\text{Bin}(a_k,\tfrac12)<rk)\le e^{-2a_k\left(\frac12-\frac{rk}{a_k}\right)^2}.
    \end{align*}
    Again, by the union bound it suffices to have
    \[
    \frac{r^{k+1}-1}{r-1}e^{-2a_k\left(\frac12-\frac{rk}{a_k}\right)^2}=\frac1{3k^2}.
    \]
    We pick $a_k$ to satisfy this constraint with equality and set $b_k=a_k-rk$. We inspect the asymptotics of $a_k$:
    \[
    k\log r+o(1)-2a_k\left(\frac12-\frac{rk}{a_k}\right)^2=\bigO(1)-2\log k.
    \]
    We rewrite this to
    \[
    \frac{a_k}{2}+\frac{2r^2k^2}{a_k}=k\log r+2\log k+2rk+\bigO(1)\sim 2rk,
    \]
    which leads to $a_k\sim 2rk$, so that $b_k\sim rk$.
    
    Finally, we choose $r_k$ so that
    \[
    \frac{r^{k+1}-1}{r-1}e^{-d_k}=\frac1{3k^2},
    \]
    which has asymptotics $d_k\sim k\log r$.
    Putting everything together, we obtained
    \[
    n_{k,r}=(kr+y_k+b_k)(1+\varepsilon^{-1}d_k)\sim 2kr\cdot \varepsilon^{-1}k\log r=2k^4r^2\log r,
    \]
    for $\varepsilon=k^{-2}r^{-1}$.
\end{proof}

\begin{proof}[Proof of~\cref{lem:trees}]
    We take $Z_k=Z_{n,k,r_k}$ as defined in \cref{lem:exp-limit1} and its inverted counterpart $Z_k'$ (see proof of~\cref{lem:exp-limit}).

    We write
    \[
    \lambda_{k,n}=\sum_{\pi\in\Pi_k^*}\E_k[I_\pi].
    \]
    Recall that $W_k(\pi)$ is the $k$-th weight of $\pi$.
    Note that
    \[
    \E_k[I_\pi]=\frac{(W_{n-k+1}(\pi)-W_k(\pi)))^{n-2k}}{(n-2k)!},
    \]
    since each of the $n-2k$ weights need to be in $(W_k(\pi),W_{n-k+1}(\pi))$, and they need to be ordered correctly.

    We denote the set of level-$k$ vertices that are reachable from $\emptyset$ via a tree path by $V_k^*=\{v_k(\pi)\ :\ \pi\in\Pi^*_k\}$. Similarly, we will write $V_{n-k}^*=\{v_{n-k}(\pi)\ :\ \pi\in\Pi^*_k\}$ to denote the set of level-$(n-k)$ vertices from which there is a tree path to $[n]$.
    Each such vertex $u\in V_k^*$ corresponds to a path $\pi\in \Pi^*_k$. We let $w(u)=W_k(\pi)$ denote the weight of the edge leading to $u$ in $\pi$. We also define $w'(u')=1-W_{n-k+1}(\pi)$ for $u'\in V_{n-k}^*$, so that the distribution $w'(U')$ is equal to the distribution of $w(U)$ for $U\in V_k^*$ and $U'\in V_{n-k}^*$ drawn uniformly at random.

    This leads to the alternative expression
    \begin{align*}
    \lambda_{k,n}
    &=\sum_{u\in V_k^*}\sum_{u'\in V_{n-k}^*}\I{u\subset u',w(u)+w'(u')\le 1}\sum_{\pi\in\Pi(u,u')}\frac{(1-w(u)-w'(u'))^{n-2k}}{(n-2k)!}\\
    &=\sum_{u\in V_k^*}\sum_{u'\in V_{n-k}^*}\I{u\subset u',w(u)+w'(u')\le 1}(1-w(u)-w'(u'))^{n-2k},
    \end{align*}
    where the indicator is needed to ensure the possibility of an increasing path from $u$ to $u'$. The final step follows from the fact that $|\Pi(u,u')|=(|u'|-|u|)!$ whenever $u\subset u'$.

    Similarly, $Z_kZ_k'$ can be written as
    \[
    Z_kZ_k'=\sum_{u\in V_k^*}\sum_{u'\in V_{n-k}^*}\left(1-w(u)\right)^n\left(1-w'(u')\right)^n,
    \]
    and we need to show that the difference between these expressions vanishes in probability.

    We draw the vertices $U\in V_k^*$ and $U'\in V_{n-k}^*$ uniformly.
    Note that $|V_k^*|=|V_{n-k}^*|=r_k^k$. This allows us to write
    \[
    \lambda_{k,n}-Z_kZ_k'=r_k^{2k}\E_k[\I{U\subset U',w(U)+w'(U')\le 1}(1-w(U)-w'(U'))^{n-2k}-(1-w(U))^n(1-w'(U'))^n].
    \]
    Note that every vertex $v\in V_k$ is equally likely to be in $V_k^*$ (i.e., to have a tree path). This means that $U$ is uniform over $V_k$, and $U'$ is uniform over $V_{n-k}$. Furthermore, both trees are independent since they depend on a disjoint set of weights. Hence, $U$ and $U'$ are independent.
    
    To prove that the difference between these random variables vanishes, we separate this difference into several nonnegative components and prove that each of their expectations vanishes.
    Firstly, we bound the influence of the indicator
    \[
    \varepsilon_1=Z_kZ_k'-r_k^{2k}\E_k[\I{U\subset U',w(U)+w'(U')\le1}\cdot\left(1-w(U)\right)^n\left(1-w'(U')\right)^n]\ge0.
    \] 
    Secondly, we use \[(1-w(U))(1-w'(U'))=1-w(U)-w'(U')+w(U)w'(U')\ge1-w(U)-w'(U')\]
    and define
    \[
    \varepsilon_2=r_k^{2k}\E_k\left[\I{U\subset U',w(U)+w'(U')\le1}\cdot\left(\left(1-w(U)\right)^n\left(1-w'(U')\right)^n-\left(1-w(U)-w'(U')\right)^n\right)\right]\ge0.
    \]
    Finally, we consider the change in exponent
    \[
    \varepsilon_3=r_k^{2k}\E_k\left[\I{U\subset U',w(U)+w'(U')\le1}\left(\left(1-w(U)-w'(U')\right)^{n-2k}-\left(1-w(U)-w'(U')\right)^n\right)\right]\ge0,
    \]
    so that
    \[
    \lambda_{k,n}-Z_kZ_k'=\varepsilon_3-\varepsilon_2-\varepsilon_1.
    \]
    Since each of these errors is nonnegative, we have
    \[
    |\lambda_{k,n}-Z_kZ_k'|\le\varepsilon_1+\varepsilon_2+\varepsilon_3,
    \]
    so that it suffices to prove that each of these errors vanish in expectation.

    \subsubsection*{Vanishing $\varepsilon_1$.} 
    Note that $\E[\varepsilon_1]\le\E[Z_kZ_k']\le1$, so that by the dominated convergence theorem, we only need to show that $\P(U\not\subset U'\text{ or }w(U)+w'(U')>1)\to0$.
    By the independence of $U$ and $U'$, we have
    \[
        \P(U\subset U')=\frac{{n-k\choose k}}{{n\choose k}}\sim \frac{(n-k)^k/k!}{n^k/k!}=\left(1-\frac kn\right)^{k}\to1
    \]
    for $k\ll\sqrt{n}$.
    Furthermore, by~\cref{lem:poi-coupling}, we have
    \[
    -n\log(1-w(u))\le \frac1k+\sum_{\bm j\in P(\bm i(u))}\Delta(\bm j),
    \]
    for all $u\in V_k^*$ with probability at least $1-k^{-2}$, where $\bm i(u)$ is the index corresponding to $u\in V_k^*$. Together with $-n\log(1-w(u))\ge nw(u)$ and the Markov inequality, we obtain the bound
    \begin{align*}
    \P(w(U)>\tfrac12)&\le k^{-2}+\P\left(\frac1k+\sum_{\bm j\in P(\bm i(U))}\Delta(\bm j)>\frac n2\right)\\
    &\le k^{-2}+\frac2n\E\left[\frac1k+|P(\bm i(U))|\right]\\
    &\le k^{-2}+\frac2{kn}+\frac{2kr}n,
    \end{align*}
    which vanishes for $1\ll k\ll\sqrt{n}$.
    We conclude
    \[
    \P(U\not\subset U'\text{ or }w(U)+w'(U')>1)\le \P(U\not\subset U')+\P(w(U)>\tfrac12)+\P(w(U')>\tfrac12)\to0,
    \]
    so that $\E[\varepsilon_1]\to0$.
    \subsubsection*{Vanishing $\varepsilon_2$.}
    We rewrite
    \begin{align*}
        &(1-w(U))^n(1-w'(U'))^n-(1-w(U)-w'(U'))^n\\
        =&(1-w(U))^n(1-w'(U'))^n\left(1-\left(1-\frac{w(U)}{1-w(U)}\frac{w'(U')}{1-w'(U')}\right)^n\right)\\
        =&\bigO\left(nw(U)(1-w(U))^{n-1}w'(U')(1-w'(U'))^{n-1}\right).
    \end{align*}
    Now, note that $w(U)(1-w(U))^{n-1}$ is maximized at $w(U)=1/n$, with value $\tfrac1n(1-\tfrac1n)^{n-1}\le \tfrac1n$, so that the above is $\bigO(n^{-1})$.
    This tells us that
    \[
    \E[\varepsilon_2]=\bigO(r_k^{2k}/n),
    \]
    which vanishes for $k\ll\frac{\log n}{\log\log n}$.
    \subsubsection*{Vanishing $\varepsilon_3$.} 
    We write
    \begin{align*}
    \E[\varepsilon_3]
    &=r_k^{2k}\E\left[\I{U\subset U',w(U)+w'(U')\le1}\left(\left(1-w(U)-w'(U')\right)^{n-2k}-\left(1-w(U)-w'(U')\right)^{n}\right)\right]\\
    &\le r_k^{2k}\E\left[\left(\left(1-w(U)-w'(U')\right)^{n-2k}-\left(\left(1-w(U)-w'(U')\right)^{n-2k}\right)^{\frac n{n-2k}}\right)\right].
    \end{align*}
    Using concavity of $y\mapsto -y^{\frac n{n-2k}}$, Jensen's inequality yields
    \begin{align*}
    \E[\varepsilon_3]&\le r_k^{2k}\left(\E\left[(1-w(U)-w'(U'))^{n-2k}\right]-\E\left[(1-w(U)-w'(U'))^{n-2k}\right]^{\frac n{n-2k}}\right)\\
    &=r_k^{2k}\E\left[(1-w(U)-w'(U'))^{n-2k}\right]\left(1-\E\left[(1-w(U)-w'(U'))^{n-2k}\right]^{\frac{2k}{n-2k}}\right).
    \end{align*}
    Next, using $1-e^{-x}\le x$ for $x=-\frac{2k}{n-2k}\log\E_k\left[(1-w(U)-w'(U'))^{n-2k}\right]\ge0$, we obtain
    \begin{align*}
        \E[\varepsilon_3]
        &\le -\frac{2kr_k^{2k}}{n-2k}\E\left[(1-w(U)-w'(U'))^{n-2k}\right]\log\E\left[(1-w(U)-w'(U'))^{n-2k}\right]\\
        &=\frac{4k^2\log r_k}{n-2k}-\frac{2k}{n-2k}q\log q,
    \end{align*}
    for $q=r_k^{2k}\E\left[(1-w(U)-w'(U'))^{n-2k}\right]$.
    The nonnegativity of $\varepsilon_3$ implies $q\log q\le 2k\log k$, so that we only need the first term to vanish, which occurs whenever $k^2\log r_k\ll n$,
    and the proof of the lemma is finished.
\end{proof}

\section{The limiting distribution}
\begin{lemma}\label{lem:gompertz}
    The limiting distribution is given by
    \[
    \P(X=x)=\sum_{k=0}^x{x\choose k}\frac{\delta-\sum_{r=0}^{k-1}(-1)^rr!}{k!},
    \]
    where $\delta=\int_{0}^{\infty}\frac{e^{-z}}{1+z}\,dz$ is the \emph{Gompertz} constant.
\end{lemma}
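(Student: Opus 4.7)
The plan is to reduce everything to a one-dimensional integral, expand, and then evaluate the resulting family of integrals by a simple recursion.

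First I would use the tower property to integrate out $Z'$. By definition,
\[
\P(X=x)=\int_0^\infty\int_0^\infty e^{-z}e^{-z'}\frac{(zz')^x}{x!}e^{-zz'}\,dz'\,dz.
\]
The inner integral is a Gamma integral: $\int_0^\infty (z')^x e^{-z'(1+z)}dz' = x!/(1+z)^{x+1}$. This collapses the expression to
\[
\P(X=x)=\int_0^\infty \frac{e^{-z}\,z^x}{(1+z)^{x+1}}\,dz.
\]
Next, I would write $z^x=((1+z)-1)^x$ and apply the binomial theorem. Setting $I_j:=\int_0^\infty e^{-z}(1+z)^{-(j+1)}\,dz$ and reindexing, one obtains
\[
\P(X=x)=\sum_{j=0}^x(-1)^j\binom{x}{j}I_j.
\]
Note that $I_0=\delta$ by definition of the Gompertz constant.

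The main (and only nontrivial) step is to evaluate $I_j$. Integration by parts with $u=e^{-z}$ and $dv=(1+z)^{-(j+1)}dz$ yields the recurrence
\[
jI_j=1-I_{j-1}\qquad (j\ge1).
\]
Setting $a_j:=(-1)^j j!\,I_j$, this recurrence becomes $a_j=a_{j-1}+(-1)^j(j-1)!$ with $a_0=\delta$, and unrolling gives
\[
a_j=\delta+\sum_{i=1}^{j}(-1)^i(i-1)!=\delta-\sum_{r=0}^{j-1}(-1)^r r!.
\]
Equivalently, $(-1)^j I_j=\left(\delta-\sum_{r=0}^{j-1}(-1)^r r!\right)/j!$.

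Substituting this back into the sum for $\P(X=x)$ gives exactly the claimed identity, since the $(-1)^j$ factors cancel against those inside $I_j$. The only step I would want to double-check is the boundary term at $z=\infty$ in the integration by parts (vanishing due to the exponential decay) and the $j=0$ base case of the formula, which reads $a_0=\delta$ under the convention that the empty sum is zero. There is no serious obstacle; the computation is essentially forced once one writes $z^x=((1+z)-1)^x$.
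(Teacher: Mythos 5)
Your proposal is correct and follows essentially the same route as the paper: integrate out $Z'$ via a Gamma integral to obtain $\int_0^\infty e^{-z}z^x(1+z)^{-(x+1)}dz$, expand via the binomial theorem (your $z^x=((1+z)-1)^x$ is the same algebraic step as the paper's $\frac{z^x}{(1+z)^{x+1}}=\frac{1}{1+z}(1-\frac{1}{1+z})^x$), and then evaluate the negative moments $\E[(Z+1)^{-(j+1)}]$ by the same integration-by-parts recurrence. The only differences are notational (your $I_j$ is the paper's $m_{j+1}$, and your auxiliary sequence $a_j=(-1)^j j! I_j$ makes the unrolling a little cleaner), but the argument is the same.
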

\begin{proof}
    We will prove this theorem by rewriting
    \[
    \P(X=x)=\E\left[e^{-ZZ'}\frac{(ZZ')^x}{x!}\right]
    \]
    to the desired formula. We first consider the conditional expectation
    \begin{align*}
    \E\left[e^{-ZZ'}\frac{(ZZ')^x}{x!}\ \middle|\ Z=z\right]
    &=\E\left[e^{-zZ'}\frac{(zZ')^x}{x!}\right]\\
    &=\frac{z^x}{x!}\int_0^\infty (z')^xe^{-zz'-z'}dz'\\
    &=\frac{z^x}{(z+1)^{x+1}},
    \end{align*}
    where the last step follows by partial integration and solving the resulting recurrence.
    Taking the expectation over $Z$, we obtained,
    \begin{equation}\label{eq:gompertz1}
    \P(X=x)=\E\left[\frac{Z^x}{(1+Z)^{x+1}}\right]=\E\left[\frac1{Z+1}\left(1-\frac1{Z+1}\right)^x\right]=\sum_{k=0}^x{x\choose k}(-1)^k\E\left[(Z+1)^{-k-1}\right].
    \end{equation}
    We thus need to compute negative moments
    \[
    m_k=\E\left[(Z+1)^{-k}\right]=\int_0^\infty\frac{e^{-z}dz}{(1+z)^k},
    \]
    for $k\ge1.$ Note that $m_1=\delta$ is Gompertz constant.
    We apply integration by parts to obtain the recursion
    \[
    m_{k+1}=\int_0^\infty\frac{e^{-z}dz}{(1+z)^{k+1}}=\left[-\frac1k\frac{e^{-z}}{(1+z)^r}\right]_{z=0}^\infty-\frac1k\int_0^\infty\frac{e^{-z}dz}{(1+z)^k}=\frac{1-m_k}{k}.
    \]
    The above can be rewritten to $k!m_{k+1}=(k-1)!-(k-1)!m_k$. Repeating this relation leads to
    \[
    k!m_{k+1}=\sum_{r=0}^{k-1}(-1)^r(k-1-r)!+(-1)^k\delta,
    \]
    so that
    \[
    m_{k+1}=\frac{(-1)^k\delta+\sum_{r=0}^{k-1}(-1)^{k-1-r}r!}{k!}=(-1)^k\frac{\delta-\sum_{r=0}^{k-1-r}(-1)^{r}r!}{k!}
    \]
    Plugging this into~\eqref{eq:gompertz1} yields the desired expression.
\end{proof}

\end{document}